\newcommand{\rd}{{\mathrm d}}
\newcommand{\cC}{\mathcal{C}}
\newcommand{\sI}{\mathscr{I}}
\newcommand{\sO}{\mathscr{O}}
\newcommand{\dF}{{\rd F}}
\newcommand{\N}{\mathbb{N}}
\newcommand{\Z}{\mathbb{Z}}
\newcommand{\R}{\mathbb{R}}
\newcommand{\C}{\mathbb{C}}
\renewcommand{\Re}{\mathrm{Re}}
\newcommand{\Hom}{\mathrm{Hom}}
\numberwithin{equation}{section}
\theoremstyle{plain}
\newcommand{\mynewtheorem}[2]{
  \newaliascnt{#1}{dummy}
  \newtheorem{#1}[#1]{#2}
  \aliascntresetthe{#1}
  \expandafter\def\csname #1autorefname\endcsname{#2}
}
\theoremstyle{plain}
\theoremstyle{definition}
\theoremstyle{plain}
\newcommand{\reg}{\mathrm{reg}}
\title{A Morse complex for the homology of vanishing cycles}
\author{Aleksander Doan and Juan Muñoz-Echániz}
\date{} 
\begin{document}

\maketitle

\abstract{\noindent We construct Morse homology groups associated with any regular function on a smooth complex algebraic variety, allowing singular and non-compact critical loci. These groups are generated by critical points of a certain large pertubation of the function, built from a normal crossing compactification of the variety. They are canonically isomorphic to the homology of vanishing cycles and---in the absence of bifurcations at infinity---recover the hypercohomology of the perverse sheaf of vanishing cycles, studied extensively in singularity theory and enumerative geometry. Our construction arises as a special case of a more general construction of Morse homology of non-compact manifolds that admit a compactification by a manifold with corners. 

\setcounter{tocdepth}{1}  
\renewcommand{\contentsname}{} 
\tableofcontents
~\\

\section{Introduction}


The singular homology of a compact smooth manifold can be described in terms of the critical points and gradient flowlines of a Morse function, via the construction of \textit{Morse homology} \cite{thom-partition,smale-morseineq,milnor-hcobordism,witten-susy-morse-theory}. Morse homology serves as a finite-dimensional model for \textit{Floer homology} \cite{Floer1988a,Floer1988,floer-infinite-dimensional-morse-theory}, a powerful tool in low-dimensional topology and symplectic geometry. 

Motivated by the emerging framework of \textit{holomorphic Floer theories} \cite{Donaldson1998,Haydys2015,abouzaidmanolescu,doan-rezchikov,kontsevich-soibelman}, we provide a finite-dimensional model for such theories by constructing Morse homology for any regular function on a smooth complex algebraic variety, and relating it to known algebro-geometric invariants; see \autoref{theorem:main1}.  Our construction arises as a special case of a more general framework for Morse homology of non-compact manifolds, which is of independent interest in topology; see \autoref{theorem:main_real}.

\subsection{Morse homology of non-compact manifolds}
In the classical framework of Morse theory, $M$ is a compact smooth manifold and $f : M \to \mathbb{R}$ is a smooth function. After perturbing $f$ to satisfy the Morse condition and choosing a generic Riemannian metric $g$, one defines Morse homology $HM_*(M,f)$ as the homology of a chain complex generated by the critical points of $f$, with differential counting gradient trajectories of $f$ with respect to $g$. It is canonically isomorphic to the singular homology $H_*(M)$---in particular, independent of $f$ and $g$; see, e.g. \cite{schwarz-book}. 

More generally, when $M$ is non-compact, the Morse homology of $(M,f)$ can be defined provided that the critical set $\mathrm{Crit}(f)$ is compact, the metric $g$ is complete, and $(f,g)$ satisfies the \textit{Palais--Smale condition} \cite{palais-smale}. It is then canonically isomorphic to the relative singular homology
 \begin{align}
 HM_\ast (M,f) \cong H_\ast (M , \{ f < -\lambda \} ) \quad \text{for } \lambda \gg 0 .\label{isomorphism-Morse}
 \end{align}
 
Motivated by applications to Floer theory and enumerative geometry, discussed in \autoref{introduction:context}, this article extends this picture to a general situation when $\mathrm{Crit}(f)$ is non-compact and the Palais--Smale condition fails. In order to control the behavior of $f$ at infinity, we work in the following general framework:

\begin{itemize}
    \item[] \emph{(Compactification condition)} $M$ can be compactified to a compact manifold with corners $\overline M$ in a way compatible with $f$, that is:  there exists a \emph{boundary function} $\tau \colon \overline M \to \R_{\geq 0}$ vanishing on the lower strata such that $\overline f \coloneq \tau f$ extends to a smooth function on $\overline M$; see \autoref{definition:compactification} for details.
\end{itemize}

Given such a compactification, we then study a large perturbation of $f$ given by 
\[
    f_\varepsilon := f + \frac{\varepsilon}{\tau} : M \to \mathbb{R} \quad , \quad 0 < \varepsilon \ll 1,
\]
and assume that the critical values of $f_\varepsilon$ behave uniformly as $\varepsilon \to 0$:
\begin{itemize}
\item[] \emph{(Finite action condition)} The limits of convergent sequences of critical values of $f_\varepsilon$ as $\varepsilon \to 0$ are contained in a fixed interval $(-\lambda , \lambda )$, and $0$ is either a regular value or an isolated critical value of $\overline f|_{\partial\overline M}$ in the stratified sense; see \autoref{def:finite-type}. 
\end{itemize}

While the compactification condition is easy to verify in many geometric examples, the finite action condition is more elusive. However, using the theory of semialgebraic sets, we prove that it holds for compactifications that are \emph{locally semialgebraic}; see \autoref{sec:semialgebraic}. In particular, this applies to the complex algebraic setting. 

Using these assumptions, we show that for $0 < \varepsilon \ll 1$, the critical set of $f_\varepsilon$ is compact and the Palais--Smale property holds for $f_\varepsilon$ and a class of complete Riemannian metrics. This leads to the \textit{finite action Morse homology} $HM_*(M,f)$ defined as the homology of a chain complex generated by the critical points of $f_\varepsilon$ with $|f_\varepsilon| < \lambda$.

\begin{theorem}\label{theorem:main_real}
Let $(M, f)$ be a smooth manifold with a smooth function, equipped with a manifold with corners compactification as above. There is a canonical isomorphism \eqref{isomorphism-Morse} where $HM_\ast (M,f)$ is the finite action Morse homology. In particular, the finite action Morse homology of $(M,f)$ does not depend on the choice of compactification.
\end{theorem}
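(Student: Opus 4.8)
\emph{Strategy.} The plan is to deduce the theorem from the classical non-compact Morse theory isomorphism \eqref{isomorphism-Morse} applied not to $f$ but to the perturbed function $f_\varepsilon$ for $0<\varepsilon\ll1$, followed by a purely topological comparison of sublevel sets carried out inside the compactification $\overline M$. Concretely I aim to produce a chain of isomorphisms $HM_\ast(M,f)\cong H_\ast(\{f_\varepsilon<\lambda\},\{f_\varepsilon<-\lambda\})\cong H_\ast(\{f<\lambda\},\{f<-\lambda\})\cong H_\ast(M,\{f<-\lambda\})$.

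\emph{Step 1: reduction to $f_\varepsilon$ and the action filtration.} By the analytic results established earlier, for $0<\varepsilon\ll1$ the set $\Crit(f_\varepsilon)$ is compact and there is a complete metric $g_\varepsilon$ for which $(f_\varepsilon,g_\varepsilon)$ satisfies Palais--Smale; hence the full Morse complex $C_\ast(f_\varepsilon,g_\varepsilon)$ is defined and \eqref{isomorphism-Morse} gives $HM_\ast(M,f_\varepsilon)\cong H_\ast(M,\{f_\varepsilon<-\mu\})$ for $\mu\gg0$. Moreover the finite action condition produces uniform $\varepsilon_0,\eta>0$ such that for all $\varepsilon\in(0,\varepsilon_0]$ no critical value of $f_\varepsilon$ lies within $\eta$ of $\pm\lambda$: a sequence of such values would otherwise converge to a point of $[-\lambda,\lambda]\setminus(-\lambda,\lambda)$, contradicting the hypothesis. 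Since the Morse differential strictly decreases $f_\varepsilon$, the critical points with $f_\varepsilon<c$ span a subcomplex $C^{<c}_\ast$, the finite action Morse complex is precisely the subquotient $C^{<\lambda}_\ast/C^{<-\lambda}_\ast$, and the standard identification $H_\ast(C^{<c}_\ast)\cong H_\ast(\{f_\varepsilon<c\},\{f_\varepsilon<-\mu\})$ (for regular $c$, $\mu\gg0$), together with a comparison via the five lemma of the long exact sequences of the pair and of the triple $\{f_\varepsilon<-\mu\}\subseteq\{f_\varepsilon<-\lambda\}\subseteq\{f_\varepsilon<\lambda\}$, gives
\[
HM_\ast(M,f)\;\cong\;H_\ast\bigl(\{f_\varepsilon<\lambda\},\{f_\varepsilon<-\lambda\}\bigr),\qquad 0<\varepsilon\le\varepsilon_0 .
\]

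\emph{Step 2: from $f_\varepsilon$-sublevels to $f$-sublevels.} The key observation is that all sublevel sets in sight are cut out on $M$ by functions extending smoothly to $\overline M$: setting $\sigma:=-(\overline f+\lambda\tau)$ and $\rho:=\lambda\tau-\overline f$, one checks that on $M$, $\{f_\varepsilon<-\lambda\}=\{\sigma>\varepsilon\}$, $\{f<-\lambda\}=\{\sigma>0\}$, $\{f_\varepsilon<\lambda\}=\{\rho>\varepsilon\}$, $\{f<\lambda\}=\{\rho>0\}$. A one-line computation shows that along the hypersurface $\{f_\varepsilon=-\lambda\}=\{\sigma=\varepsilon\}$ one has $\rd f_\varepsilon=-\tfrac1\tau\,\rd\sigma$, so that ``$-\lambda$ is a critical value of $f_\varepsilon$'' is equivalent to ``$\varepsilon$ is a critical value of $\sigma|_M$'', and similarly for $\rho$; thus the uniform regularity from Step 1 says exactly that $\sigma|_M$ and $\rho|_M$ have no critical value in $(0,\varepsilon_0]$. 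Using completeness of the metrics and the compactness of $\{\sigma\ge s\}$, $\{\rho\ge s\}$ in $\overline M$ for $s>0$, together with the clause of the finite action condition allowing $0$ to be an isolated critical value of $\overline f|_{\partial\overline M}$ in the stratified sense (which, with the help of local semialgebraicity, licenses passing to the limit $s\to0^+$), a (stratified) deformation-retraction argument shows that the inclusions $\{f_\varepsilon<-\lambda\}\hookrightarrow\{f<-\lambda\}$ and $\{f_\varepsilon<\lambda\}\hookrightarrow\{f<\lambda\}$ are homotopy equivalences for $0<\varepsilon\ll1$; the resulting map of pairs then gives $HM_\ast(M,f)\cong H_\ast\bigl(\{f<\lambda\},\{f<-\lambda\}\bigr)$.

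\emph{Step 3: the positive action region, and the main obstacle.} It remains to show that $\{f<\lambda\}\hookrightarrow M$ induces an isomorphism on homology for $\lambda\gg0$, equivalently that the ``positive action region'' $\{f\ge\lambda\}=\{\rho\le0\}\cap M$ is homologically negligible. This is where I expect the real difficulty to lie. For $\lambda$ large, $\{f\ge\lambda\}$ is a neighborhood in $M$ of the part of $\partial\overline M$ on which $\overline f>0$; there $\overline f$ is bounded away from $0$, so $f=\overline f/\tau$ is monotone in the collar coordinate $\tau$, and $\{f\ge\lambda\}$ deformation retracts onto its outer boundary $\{f=\lambda\}$, contributing nothing to $H_\ast(M,\{f<\lambda\})$. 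Turning this into a proof requires the compatibility of the compactification with $f$ — in particular that $\overline{\Crit(f)}$ is compact in $\overline M$, so that no essential topology of $M$ sits above level $\lambda$ in the interior either — together once more with the stratified structure of $\overline f|_{\partial\overline M}$ near $0$. Granting this, $HM_\ast(M,f)\cong H_\ast(M,\{f<-\lambda\})$. Finally, the right-hand side visibly does not involve $\varepsilon$, the metric, or the compactification, so the stated independence is immediate; independence of $\lambda$ for $\lambda\gg0$ follows from the same analysis, since increasing $\lambda$ only shrinks $\{f<-\lambda\}$ within the already-understood collar region around $\{\overline f<0\}\cap\partial\overline M$, over which the homotopy type has stabilized.
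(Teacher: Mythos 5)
Your Step 1 is sound and is exactly how the paper begins: for a fixed $0<\varepsilon\ll1$ the finite action group is the window Morse homology $HM_\ast^{(-\lambda,\lambda)}(f_\varepsilon,g)$, which by \autoref{proposition:morse homology} is $H_\ast(\{f_\varepsilon\leq\lambda\},\{f_\varepsilon\leq-\lambda\})$. The divergence, and the genuine gap, is in Steps 2--3, where you try to pass from $f_\varepsilon$-sublevel sets to $f$-sublevel sets \emph{at the fixed upper level} $\lambda$ and then dispose of $\{f\geq\lambda\}$. The pivotal claim in Step 3 --- that $\{f\geq\lambda\}$ sits over the part of $\partial\overline M$ where ``$\overline f$ is bounded away from $0$'', so that $f=\overline f/\tau$ is collar-monotone there --- is false in general: the closure of $\{f\geq\lambda\}=\{\overline f\geq\lambda\tau\}$ in $\overline M$ can meet $\{\overline f=0\}\cap\partial\overline M$ (take boundary points where $\overline f\to0$ while $\tau\to0$ faster), and this is precisely the locus of critical points at infinity of $f$. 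Although $f$ has no interior critical points above level $\lambda$ (its critical values lie in $\cC\subset(-\lambda,\lambda)$), the pair $(f,g)$ need not satisfy Palais--Smale on $\{f\geq\lambda\}$: by \eqref{norm2} the gradient can degenerate along sequences approaching a critical point at infinity through that region, so no gradient-flow deformation retraction of $\{f\geq\lambda\}$ onto $\{f=\lambda\}$ is available, and compactness of $\overline{\Crit(f)}$ does not help. Eliminating exactly this difficulty is the purpose of the $\varepsilon/\tau$ barrier, so Step 3 cannot be an afterthought. A secondary problem: in Step 2 you invoke local semialgebraicity, which is not a hypothesis of the theorem (it is only a sufficient criterion, \autoref{prop:finitetype}, for the finite action condition), and the ``stratified deformation-retraction argument'' is asserted rather than carried out; the negative-level comparison can in fact be rescued by retracting $\{\sigma>s'\}$ onto $\{\sigma\geq\varepsilon\}$ for each fixed $s'>0$ (the relevant compact regions only touch the boundary where $\overline f\in[-\varepsilon,-s']$, away from critical values of $\overline f|_{\partial\overline M}$) and then letting $s'\downarrow0$, but as written this step too is a sketch.

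The paper avoids the upper-level comparison altogether, and this is the idea your proposal is missing. By \autoref{prop:finite-type}, for a sequence $\varepsilon_n\downarrow0$ one can choose $\Lambda_n\uparrow\infty$ so that $f_{\varepsilon_n}$ has no critical values in $[\lambda,\Lambda_n]$; hence the window can be enlarged for free, $HM_\ast^{(-\lambda,\lambda)}(f_{\varepsilon_n},g)\cong HM_\ast^{(-\lambda,\Lambda_n)}(f_{\varepsilon_n},g)\cong H_\ast(\{f_{\varepsilon_n}\leq\Lambda_n\},\{f_{\varepsilon_n}\leq-\lambda\})$, with a \emph{moving} upper cutoff. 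Since $f_{\varepsilon_n}\downarrow f$, the pairs $(\{f_{\varepsilon_n}\leq\Lambda_n\},\{f_{\varepsilon_n}\leq-\lambda\})$ exhaust $(M,\{f<-\lambda\})$ via cofibration inclusions, and the colimit theorems for singular homology give \eqref{isomorphism-Morse} directly (\autoref{proposition:isosingular}); independence of the compactification then follows because the right-hand side mentions neither $\tau$ nor $\overline M$. In particular the statement you isolate as Step 3, namely $H_\ast(M,\{f<\lambda\})=0$, is true under the finite action hypothesis, but the only proof available at this level of generality runs through the same enlarged-window-plus-exhaustion mechanism, not through the collar picture you propose.
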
 

The proof of \autoref{theorem:main_real} occupies the first three sections, and relies heavily on the framework of \emph{logarithmic calculus} on manifolds with corners \cite{melrose}. To take the limit $\varepsilon \to 0$ and establish an isomorphism with singular homology, we construct continuation maps using the \emph{slow} time-dependent gradient flow considered by Floer \cite{floer-spheres} and the geometry of the compactification. (This is a subtle point as the standard construction of the continuation map yields a map  from $\varepsilon$ to $\varepsilon'\leq \varepsilon$ which may fail to be an isomorphism.)

\subsection{Homology of vanishing cycles}\label{introduction:vanishing}
The finite action Morse homology groups have a  natural interpretation in an algebro-geometric context.
Consider a smooth complex algebraic variety $X$ equipped with a regular, i.e. complex algebraic, function $F : X \to \mathbb{C}$. By a result of Verdier \cite[Corollaire 5.1]{verdier}, there exist a minimal finite set $B(F) \subset \mathbb{C}$, called the set of \textit{bifurcation values} of $F$, such that 
\[
 F^{-1} (\mathbb{C}\setminus B(F) ) \xrightarrow{F} \mathbb{C} \setminus B(F)
\]
is a $C^\infty$ locally trivial fibration. Thus, a basic invariant of the pair $(X,F)$ is given by the singular homology of the pair $(X , F^{-1}(z_0) )$, for any $z_0 \in \mathbb{C}\setminus B(F)$. More canonically, the \emph{homology of vanishing cycles} of $(X, F)$ is the relative homology group
\[
H_{\ast}(X, F) \coloneq H_\ast (X , F^{-1} (-\lambda )) \cong H_\ast (X, \{\Re(F) \leq - \lambda \} ) \quad\text{for } \lambda \gg 0.
\]
From an algebraic viewpoint, a result of Sabbah \cite{sabbah} gives a canonical isomorphism 
\[
H^\ast (X , F)\otimes \mathbb{C}  \cong \mathbb{H}^\ast \big( X , (\Omega_{X}^\bullet , d + dF\wedge) \big)
\]
where the right-hand side is the \textit{algebraic deRham cohomology} of $(X,F)$: the hypercohomology of the complex of sheaves $(\Omega_{X}^\bullet , d + dF\wedge)$ on $X$ given by the algebraic differential forms with Witten's deformation of the exterior derivative \cite{witten-susy-morse-theory}.

If $F$ is a Morse function, the homology of vanishing cycles agrees with the Morse homology of $\Re(F)$.  Our goal is to provide a Morse-theoretic construction of the homology of vanishing cycles in a general situation. Non-compactness issues are especially prevalent in the algebraic case: for example, when $X$ is affine, then $\mathrm{Crit}(F)$ is non-compact whenever positive dimensional. The following is obtained as a special case of \autoref{theorem:main_real} by constructing a compactification of $(X,\mathrm{Re}(F))$ from a simple normal crossing compactification of $X$; this construction is outlined below. 
 
\begin{theorem} \label{theorem:main1}
    Let $X$ be a smooth complex algebraic variety with a regular function $ F: X \to \mathbb{C}$. There is a canonical isomorphism between the finite action Morse homology and the homology of vanishing cycles of $(X, F)$.
\end{theorem}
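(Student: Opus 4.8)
\emph{Strategy.} The plan is to deduce \autoref{theorem:main1} from \autoref{theorem:main_real}: I would build, out of a simple normal crossing compactification of $X$, a manifold-with-corners compactification of $(X,\mathrm{Re}(F))$ satisfying the compactification and finite action conditions, and then identify the relative homology group in \eqref{isomorphism-Morse} with $H_*(X,F)$. All of the analysis behind \autoref{theorem:main_real}, together with the verification of the finite action condition for locally semialgebraic compactifications in \autoref{sec:semialgebraic}, are used as black boxes; the genuinely new work is the construction of the compactification and the final homological identification.

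\emph{The compactification.} Using resolution of singularities and elimination of indeterminacy, I would choose a smooth projective variety $\overline X\supseteq X$ with $D:=\overline X\setminus X$ a simple normal crossing divisor such that $F$ extends to a morphism $\widetilde F\colon\overline X\to\mathbb{CP}^1$; since $F$ is regular on $X$, the polar divisor $\widetilde F^{-1}(\infty)$ is a union of components $D_i$ of $D$, each with a well-defined pole order $m_i\geq 1$. The local input is that near any point of $D$ there are holomorphic coordinates $(z_1,\dots,z_n)$ with $D=\{z_1\cdots z_k=0\}$ and $F=z_1^{-m_1}\cdots z_p^{-m_p}\cdot u$, where $D_1,\dots,D_p$ are the polar components through the point and $u$ is a nowhere-vanishing holomorphic function. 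Let $\overline M$ be the real oriented blow-up of $\overline X$ along all components of $D$; as $D$ is simple normal crossing, $\overline M$ is a compact manifold with corners with interior $X$, locally modelled, in the $z_j$, on polar coordinates $z_j=r_j e^{i\theta_j}$ with $r_j\geq 0$. Choosing global boundary defining functions $\rho_i$ for the boundary hypersurfaces $H_i$ of $\overline M$ (so $\rho_i=r_i$ up to a positive smooth factor in such a chart), I would set
\[
\tau\ :=\ \prod_{D_i\subseteq\widetilde F^{-1}(\infty)}\rho_i^{\,m_i}\ \cdot\ \prod_{D_i\not\subseteq\widetilde F^{-1}(\infty)}\rho_i .
\]
Then $\tau$ vanishes exactly on $\partial\overline M$, and in the local model $\tau\widetilde F$ is a positive smooth function times $\big(\prod_{\text{horizontal }i}r_i\big)\,e^{-i(m_1\theta_1+\dots+m_p\theta_p)}\,u$; since holomorphic functions and the functions $e^{i\theta_j}$ pull back smoothly under the real blow-up, $\overline f:=\tau\,\mathrm{Re}(F)=\mathrm{Re}(\tau\widetilde F)$ extends smoothly to $\overline M$. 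Thus $(\overline M,\tau)$ is a compactification of $(X,\mathrm{Re}(F))$ in the sense of \autoref{definition:compactification}; since $\overline X$, $D$, the real blow-up, the $\rho_i$, $\tau$ and $\mathrm{Re}(F)$ are all locally semialgebraic, the compactification is locally semialgebraic, so the finite action condition holds by \autoref{sec:semialgebraic}.

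\emph{Identifying the target.} With both hypotheses verified, \autoref{theorem:main_real} yields a canonical isomorphism $HM_*(X,F)\cong H_*\big(X,\{\mathrm{Re}(F)<-\lambda\}\big)$ for $\lambda\gg0$, where $HM_*(X,F)$ denotes the finite action Morse homology of $(X,\mathrm{Re}(F))$ equipped with the compactification just built. To conclude I would identify the right side with the homology of vanishing cycles exactly as in the discussion preceding \autoref{theorem:main1}: for $\lambda$ larger than $-\mathrm{Re}(b)$ over all bifurcation values $b\in B(F)$, the half-plane $\{\mathrm{Re}(z)\leq-\lambda\}$ avoids $B(F)$, so $F$ restricts to a locally trivial fibration over it, and as the half-plane is contractible the subspaces $\{\mathrm{Re}(F)<-\lambda\}$, $\{\mathrm{Re}(F)\leq-\lambda\}$ and the fiber $F^{-1}(-\lambda)$ are mutually homotopy equivalent in $X$; hence $H_*\big(X,\{\mathrm{Re}(F)<-\lambda\}\big)\cong H_*\big(X,F^{-1}(-\lambda)\big)=H_*(X,F)$. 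Canonicity of the composite isomorphism and independence of the compactification and of $\lambda$ are inherited from \autoref{theorem:main_real}.

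\emph{Main obstacle.} The one substantive step is the compactification: checking that $\tau\,\mathrm{Re}(F)$ extends smoothly across every stratum of the real blow-up, in particular across the corners where polar and horizontal components of $D$ meet, and that $(\overline M,\tau)$ satisfies the precise axioms of \autoref{definition:compactification} — notably that $\tau$ is admissible as a boundary function even though it must carry the pole multiplicities $m_i$, which higher-order poles force and which no further blow-up can remove. Everything else is a direct appeal to resolution of singularities, to \autoref{theorem:main_real}, to \autoref{sec:semialgebraic}, or to the standard deformation retraction at infinity already used above to define $H_*(X,F)$.
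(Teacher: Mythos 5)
Your overall route is exactly the paper's: a simple normal crossing compactification, the real blow-up along $D$, a boundary function carrying the pole multiplicities so that $\tau\,\Re F$ extends smoothly, local semialgebraicity to obtain the finite action condition, and then \autoref{theorem:main_real} together with the fibration argument identifying $H_*(X,\{\Re F<-\lambda\})$ with $H_*(X,F^{-1}(-\lambda))$. Two steps, however, are genuine gaps as written. First, you take $\overline X$ projective; such a compactification exists only when $X$ is quasi-projective, while \autoref{theorem:main1} is stated for an arbitrary smooth complex algebraic variety. The paper covers the general case by invoking Nagata's theorem, producing a complete but possibly non-projective $Y$, and this is precisely where its extra work enters. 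Second, and more seriously, your verification of the finite action condition rests on the bare assertion that your globally chosen boundary defining functions $\rho_i$ are locally semialgebraic. A boundary defining function obtained by patching with a partition of unity has no reason to be semialgebraic, and if $\tau$ is not locally semialgebraic then \autoref{prop:finitetype} does not apply and the finite action condition---the crux of the whole construction, since without it the finite action Morse homology is not even defined---remains unverified. The paper's device is to take $\rho_i=|s_i|_{h_i}$ for a Hermitian metric $h_i$ on $\sO_Y(D_i)$ that is itself semialgebraic: real algebraic metrics exist in the projective case (\autoref{remark:metricprojective}), and in the general Nagata case one needs the lemma that every Nash line bundle carries a Nash Hermitian metric, proved via the Nash embedding and approximation theorems. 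Your ``choose $\rho_i$'' step must be replaced by such a construction before the appeal to \autoref{sec:semialgebraic} is legitimate; this is the subtlety the paper flags explicitly.

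Apart from this, the differences from the paper are cosmetic and harmless. You eliminate indeterminacy so that $F$ becomes a morphism to $\mathbb{CP}^1$ and use the exact pole orders $m_i$, whereas the paper keeps $F$ rational and only requires $\alpha_i\geq\max(-\mathrm{ord}(F,D_i),1)$; either works. Your local normal form with a nowhere-vanishing unit $u$ fails at points of $D$ where $F$ vanishes, but smoothness of $\tau\,\Re F$ on the blow-up only needs $u$ holomorphic, so nothing breaks. Finally, absorbing the positive smooth factor relating $\rho_i$ to $r_i$ into the corner coordinates, so that $\tau$ is an exact monomial as \autoref{definition:compactification} demands, is routine and is done implicitly in the paper's proof of \autoref{proposition:SNC}; your concluding identification of $H_*(X,\{\Re F<-\lambda\})$ with the homology of vanishing cycles matches the paper's (definitional) use of Verdier's fibration theorem.
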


There is another homology theory naturally associated with the pair $(X,F)$, defined in terms of a perverse sheaf $\mathscr{P}_{F}^\bullet$ on the critical locus $\mathrm{Crit}F$ known as the \textit{perverse sheaf of vanishing cycles}. Denoting by $C(F) \subset \mathbb{C}$ the set of critical values, Brav, Bussi, Dupont, Joyce and Szendroi \cite{Brav2015,bussi} defined this perverse sheaf by
\[
\mathscr{P}^\bullet_F = \bigoplus_{z \in C(F)} (\phi_{F-z} \underline{\mathbb{Z}} )|_{\mathrm{Crit}F \cap F^{-1}(z)} 
\]
where $\phi_{F}$ denotes Deligne's functor of vanishing cycles \cite{deligne} . By the \textit{sheaf-theoretic cohomology of vanishing cycles} of $(X,F)$ we shall mean the hypercohomology group $HP^\ast (X, F) := \mathbb{H}^\ast (\mathrm{Crit}(F) , \mathscr{P}^\bullet_F )$. Describing this group via Morse homological constructions is an open problem of relevance to enumerative geometry, symplectic and low-dimensional topology \cite{bussi,abouzaidmanolescu}; see \cite{maxim22,maxim20} for progress in this direction.  \autoref{theorem:main1} provides a solution to this problem in the following situation. We say $F$ has \emph{no bifurcations at infinity} if for every $z \in \mathbb{C}$ there exists $\delta >0$ and a compact set $K \subset X$ such that the mapping $F : F^{-1} (B_\delta (z) )\setminus K \to B_\delta (z)$ is a $C^\infty$ locally trivial fibration. In this case, \autoref{theorem:main1} and  \autoref{proposition:sheafcomputation} imply:
\begin{corollary}\label{corollary:sheaf}If $F$ has no bifurcations at infinity, then there is an isomorphism between the finite action Morse cohomology and the sheaf-theoretic cohomology of vanishing cycles of $(X, F)$,
\[
HM^{\ast +1} (X,F) \cong HP^\ast (X, F).
\]
\end{corollary}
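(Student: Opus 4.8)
The plan is to obtain the corollary by combining the cohomological form of \autoref{theorem:main1} with \autoref{proposition:sheafcomputation}, which, under the hypothesis of \emph{no bifurcations at infinity}, identifies $HP^\ast(X,F)$ with a relative singular cohomology group of the pair $(X,F^{-1}(-\lambda))$. By \autoref{theorem:main1} one has $HM_\ast(X,F)\cong H_\ast(X,F^{-1}(-\lambda))$; since $X$ and its generic fibre have the homotopy type of finite CW complexes, all the groups involved are finitely generated, so dualising (or running the Morse construction cohomologically) gives $HM^\ast(X,F)\cong H^\ast(X,F^{-1}(-\lambda))$. Everything then reduces to proving that $HP^\ast(X,F)\cong H^{\ast+1}(X,F^{-1}(-\lambda))$ when $F$ has no bifurcations at infinity, and I sketch below how I would argue this, since it carries the real content.

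Because $\phi_{F-z}\underline{\mathbb{Z}}$ is supported on $\mathrm{Crit}(F)\cap F^{-1}(z)$, one has $HP^\ast(X,F)=\bigoplus_{z\in C(F)}\mathbb{H}^\ast(F^{-1}(z),\phi_{F-z}\underline{\mathbb{Z}})$. Fix a critical value $z$, a small closed disk $D_z$ around $z$ whose only point of $C(F)$ is $z$, and a point $z'\in\partial D_z$. Deligne's distinguished triangle $\underline{\mathbb{Z}}_{F^{-1}(z)}\to\psi_{F-z}\underline{\mathbb{Z}}\to\phi_{F-z}\underline{\mathbb{Z}}\to\underline{\mathbb{Z}}_{F^{-1}(z)}[1]$, together with the identification $\mathbb{H}^\ast(F^{-1}(z),\psi_{F-z}\underline{\mathbb{Z}})\cong H^\ast(F^{-1}(z'))$ of nearby-cycle hypercohomology with the cohomology of the nearby fibre — which holds because the hypothesis makes $F$ a locally trivial fibration over $D_z\setminus\{z\}$ — exhibits $\mathbb{H}^\ast(F^{-1}(z),\phi_{F-z}\underline{\mathbb{Z}})$ as the cohomology of the mapping cone of the specialisation map $H^\ast(F^{-1}(z))\to H^\ast(F^{-1}(z'))$. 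Once one knows that $F^{-1}(z)\hookrightarrow F^{-1}(D_z)$ is a homotopy equivalence, this mapping cone is $H^{\ast+1}(F^{-1}(D_z),F^{-1}(z'))$. Establishing this homotopy equivalence is where the no-bifurcations-at-infinity hypothesis is genuinely needed: one writes $F^{-1}(D_z)$ as the union of a neighbourhood of the relevant compact set $K$, over which $F$ is proper, and a piece on which $F$ is a trivial fibration over $D_z$, and glues the two deformation retractions onto $F^{-1}(z)$.

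To globalise, fix a basepoint $\ast=-\lambda$ far to the left, disjoint arcs joining $\ast$ to each $\partial D_z$, and let $T\subset\mathbb{C}$ be the union of $\{\ast\}$, these arcs, and the disks $D_z$; then $T$ is a deformation retract of $\mathbb{C}$ via a retraction that is stationary on $\bigcup_z D_z$. By Verdier's theorem and the hypothesis, $B(F)\subseteq C(F)$ and $F$ is a locally trivial fibration over $\mathbb{C}\setminus C(F)$; as such a fibration has the homotopy lifting property, the retraction lifts to a deformation retraction of $X$ onto $F^{-1}(T)$, so $H^\ast(X,F^{-1}(-\lambda))\cong H^\ast(F^{-1}(T),F^{-1}(\ast))$. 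Since $T/\{\ast\}$ is a wedge of the $D_z$ each with a whisker attached, excision and contraction of the arcs identify $H^\ast(F^{-1}(T),F^{-1}(\ast))$ with $\bigoplus_z H^\ast(F^{-1}(D_z),F^{-1}(z'))$. Putting the last two paragraphs together, $HP^\ast(X,F)\cong\bigoplus_z H^{\ast+1}(F^{-1}(D_z),F^{-1}(z'))\cong H^{\ast+1}(X,F^{-1}(-\lambda))\cong HM^{\ast+1}(X,F)$.

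The recurring obstacle is the non-properness of $F$: the hypothesis of no bifurcations at infinity is used in exactly two places — to make $F^{-1}(D_z)$ deformation retract onto $F^{-1}(z)$, and to make $F$ an honest locally trivial fibration over $\mathbb{C}\setminus C(F)$ — and it is the failure of these that lets $HP^\ast$, which by construction only ever sees a neighbourhood of the critical locus, differ from $HM^\ast$ in general. The one genuinely technical step is making precise the decomposition of $F^{-1}(D_z)$ into a proper part and a trivial-fibration-at-infinity part and matching the two trivialisations along their overlap; the remaining ingredients are the standard homotopy theory of locally trivial fibrations and the formalism of nearby and vanishing cycles.
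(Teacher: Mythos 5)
Your proposal follows essentially the same route as the paper: \autoref{theorem:main1} gives $HM^{\ast}(X,F)\cong H^{\ast}(X,\{\Re (F)\leq -\lambda\})$, your per-critical-value identification of $\mathbb{H}^{\ast}(F^{-1}(z),\phi_{F-z}\underline{\mathbb{Z}})$ with $H^{\ast+1}(F^{-1}(D_z),F^{-1}(z'))$ via the Deligne triangle and the homotopy equivalence $F^{-1}(z)\hookrightarrow F^{-1}(D_z)$ is exactly the content of \autoref{proposition:sheafcomputation} (the paper gets the retraction by flowing along $-\nabla|F|^2$ for a metric that is a product on a trivialisation at infinity, rather than by your gluing of a proper part and a trivial-at-infinity part), and your spider-shaped $T$ argument makes explicit the globalisation step that the paper leaves implicit when it combines the two results. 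One point needs repair, however: you justify $\mathbb{H}^{\ast}(F^{-1}(z),\psi_{F-z}\underline{\mathbb{Z}})\cong H^{\ast}(F^{-1}(z'))$ by local triviality of $F$ over $D_z\setminus\{z\}$ alone, and that is not sufficient. For $F=x+x^2y$ on $\mathbb{C}^2$ one has local triviality over a small punctured disc at $0$ (since $B(F)=\{0\}$), yet $\phi_F\underline{\mathbb{Z}}=0$, so the triangle forces $\mathbb{H}^{\ast}(F^{-1}(0),\psi_F\underline{\mathbb{Z}})\cong H^{\ast}(F^{-1}(0))\neq H^{\ast}(F^{-1}(t))$; it is precisely this identification, not local triviality over the punctured disc, that fails in the presence of bifurcation at infinity. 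The fix is available within your own argument: the identification requires the control at infinity over the whole disc, i.e.\ the tube retraction $F^{-1}(D_z)\simeq F^{-1}(z)$ that you establish later, and the cleanest way to use it (as in the paper's appendix) is to push everything forward to the tube $T=F^{-1}(D_z)$ and compare $R(\iota_z)_{\ast}\psi_{F-z}\underline{\mathbb{Z}}$ with $R(\iota_{z'})_{\ast}\underline{\mathbb{Z}}$ there, using both the fibration over $D_z\setminus\{z\}$ and the retraction, before passing to hypercohomology. With that rerouting your argument is complete and coincides with the paper's.
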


When $F$ has bifurcations at infinity, the isomorphism may no longer hold: 
\begin{example}
The function $F = x + x^2 y$ on $X = \mathbb{C}^2$ has $C(F) = \emptyset$ and hence $HP^\ast (X,F) = 0$. But one can see that $B(F) = \{ 0 \}$, so $F$ has a bifucation at infinity. By \autoref{theorem:main1}, one computes
\[
HM^\ast (X, F) \cong H^\ast (X,F) \cong  H^\ast (\mathbb{C}^2 ,\{ x+x^2y =-1\} ) \cong \mathbb{Z}[2] .
\]
\end{example}
\subsection{Outline of the construction}\label{introduction:sketch}

We now outline the construction of $HM_\ast (X,F)$ associated with a regular function $F \colon X \to \C$, assuming for simplicity that $X$ is quasi-projective. By Hironaka's resolution of singularities \cite{hironaka1,hironaka2}, there exists a smooth projective variety $Y$ with an embedding $X \subset Y$ whose complement $D := Y \setminus X$ is a simple normal crossings divisor. Let $D_i$ be the irreducible components of $D$, and denote by $\mathrm{ord}(F,D_i) \in \mathbb{Z}$ the order of $F$ along $D_i$. Choose positive integers $\alpha_i$ at least as large as the corresponding pole orders: $\alpha_i \geq - \mathrm{ord}(F,D_i )$. Each $D_i$ is the transversely cut-out zero set of a section $s_i$ of an algebraic line bundle $\mathscr{O}_Y (D_i )\to Y$, and we equip each $\mathscr{O}_Y (D_i ) $ with an algebraic Hermitian metric. Set 
\[
    \tau =  \prod_{i =1}^\ell |s_i |^{\alpha_i} \quad\text{and}\quad f_\varepsilon = \Re(F) + \frac{\varepsilon}{\tau}.
\]

We prove that $\tau$ and $f_\varepsilon$ extend smoothly to a manifold-with-corners compactification $\overline X$ obtained from $Y$ by taking the real blow-up along $D$. The crucial point is that $\overline X$ and $\tau$ satisfy the compactification and finite action conditions discussed earlier. This allows us to define a Morse complex
\[
    \big( CM_\ast^{(-\lambda , \lambda)}(X,(f_\varepsilon , g) , \partial \big), \quad 0 < \varepsilon \ll 1, 
\]
associated to a small perturbation of $f_\varepsilon$ for $0 < \varepsilon \ll 1$, and a suitable Riemannian metric $g$ adapted to the geometry of the compactification. This complex is generated only by critical points whose critical value lies in $(-\lambda , \lambda )$.
The \textit{finite action Morse homology} of $(X,F)$ is the homology of that complex
\[
HM_\ast (X,F) := H_\ast \big( CM_\ast^{(-\lambda , \lambda)} (X,f_\varepsilon ,g) , \partial \big), \quad 0 < \varepsilon \ll 1,
\]
which we then show to be isomorphic to the homology of vanishing cycles $H_*(X,F)$.

\begin{example}
If $X = \mathbb{C}^n$ and $F: \mathbb{C}^n \to \mathbb{C}$ is a polynomial of degree $d>0$, then $\Phi = 1/\tau$ is the strictly plurisubharmonic function $\Phi (x) = (1 + |x|^2 )^{d/2}$. The Kähler metric on $X$ associated to $\Phi$ (see, e.g. \cite{Cieliebak2012}) is then adapted to the compactification of $X= \mathbb{C}^n$ to $Y = \mathbb{C}P^n$. 
\end{example}

When $X$ is not quasi-projective then, by Nagata's compactification theorem \cite{nagata62,nagata63} combined with Hironaka's result, there still exists a simple normal crossings compactification $X \subset Y$ where $Y$ is a compact smooth complex algebraic variety, no longer projective. There is a subtle point here, in that we can no longer guarantee that the algebraic line bundles $\mathscr{O}_Y (D_i )$ carry \textit{algebraic} Hermitian metrics---however, we will see that they still carry \textit{Nash} Hermitian metrics, and with this modification the above construction of $HM_\ast (X, F)$ carries through. See \autoref{section:algebraic} for further details.


\subsection{Holomorphic Floer theories}\label{introduction:context}

Our construction of finite action Morse homology, allowing for noncompact or singular critical loci, is motivated by scenarios typically appearing in the study of \textit{holomorphic Floer theories}. This emerging framework, with origins in the Donaldson--Thomas program \cite{Donaldson1998}, aims to study the monodromy structures arising from holomorphic action functionals in Floer homology (\cite{Donaldson1998,Haydys2015,doan-rezchikov,kontsevich-soibelman}). This has inspired the development of new invariants of low-dimensional manifolds \cite{abouzaidmanolescu,Tanaka-Thomas}, holomorphic symplectic manifolds and their holomorphic Lagrangian submanifolds \cite{Brav2015,bussi} using algebro-geometric techniques---but their relation to Floer homology remains to be understood. Indeed, essential difficulties related to the failure of compactness typically arise when trying to set up the corresponding Floer homology with differential geometric constructions \cite{Taubes2013}.

Our approach to define Morse homology using a large perturbation $f_\varepsilon = f + \varepsilon/\tau$ has an infinite dimensional analogue in the setting of Floer homology of flat $SL(2,\mathbb{C})$ connections on a closed $3$-manifold (with the $SL(2,\mathbb{C})$ Chern--Simons functional playing the role of $F$), and the Vafa--Witten equations on closed $4$-manifolds. This has been studied by Taubes \cite{taubesVW,taubes2024spectral,taubes2024non} and by the second named author \cite{munoz25counting}, and constitutes the main motivation for the constructions in this article.

\subsection*{Acknowledgments} We thank Simon Donaldson, Johannes Huisman, Dominic Joyce, Francesco Lin, Laurentiu Maxim, and Richard Thomas for coversations from which this work has benefitted. Aleksander Doan is supported by Trinity College, Cambridge, and EPSRC grant 8927760 \emph{Gauge theory and pseudo-holomorphic curves in symplectic six-manifolds}.

\newpage
\section{Compactifications by manifolds with corners}
\label{section:compactifications}

Let $f \colon M \to \R$ be a smooth function on a manifold $M$. If $M$ is non-compact, then in general $f$ does not give rise to well-defined Morse homology, since its behavior at infinity can be highly irregular. In this section we introduce a general framework for controlling the behavior at infinity by assuming the existence of a compactification $M \subset \overline M$, where $\overline M$ is a compact manifold with corners and $f$ has poles along $\partial\overline M = \overline M \setminus M$. As we will see in \autoref{section:algebraic}, this assumption is satisfied in the algebraic situation discussed in \autoref{theorem:main1}. 

\begin{remark}
    A more general framework, arising naturally in algebraic geometry, would be to allow $\overline M$ to be a compact Whitney stratified space, with $f$ having poles along the lower-dimensional strata. However, every Whitney stratified space admits a canonical resolution by a manifold with corners, mapping the boundary to the lower-dimensional strata \cite[Section 2]{albin}. Therefore, this more general framework could be reduced to the one considered here.
\end{remark}

\subsection{Manifolds with corners}

We lay out our conventions for the notion of a smooth manifold with corners. For other notions appearing in the literature, see \cite[\S 2]{joyce-corners}. For $0 \leq m \leq n$, let 
\[
    \mathbb{R}^{n,m} \coloneq \mathbb{R}^{n-m}\times \mathbb{R}^{m}_{\geq 0}
\] 
be the \textit{$n$-corner of depth $m$}, equipped with the Euclidean topology. A smooth $n$--dimensional \emph{manifold with corners} $X$ is a Hausdorff, second-countable topological space with an equivalence class of smooth atlas of \emph{corner charts} $(U , \phi )$. Here, $U \subset \mathbb{R}^{n,m}$ is an open subset for some $0 \leq m \leq n$, and $\phi : U \hookrightarrow X$ is an open embedding. For any two charts $(U_1, \phi_1 )$, $(U_2 , \phi_2 )$ in a smooth atlas their transition map $\phi_{2}^{-1} \circ \phi_{1} : \phi_{1}^{-1}(U_1 \cap U_2 )  \to \phi_{2}^{-1} (U_1 \cap U_2 )$ is required to be smooth in the sense that it extends to a smooth map between open subsets of $\mathbb{R}^n$. By abuse of notation, we will often identify $U$ with $\phi(U)$ and simply say that  `$U \subset \R^{n,m}$ is a corner chart of $X$'. 

A map between manifolds with corners is smooth if in all corner charts it is smooth in the above sense. In particular, one can define the tangent bundle $TX \to X$ in the usual way, as well as the sheaf $\sO_X$ of smooth functions on $X$. We will use $TX$ interchangeably to refer to both the vector bundle or its underlying sheaf of $\sO_X$--modules; it shall be clear from context which we are referring to.

A manifold with corners carries a natural stratification by depth. The \textit{interior} of $X$ is the depth $0$ stratum; the \textit{boundary} $\partial X$ of $X$ is the union of strata of depth $\geq 1$. With this definition, $\partial X$ is only a stratified space with smooth strata and not a manifold with corners. (Another definition of the boundary is possible which makes it into a manifold with corners but we will not use it here \cite[\S 2]{joyce-corners}.)  However, $\partial X$ is equipped with a sheaf of smooth functions $\sO_{\partial X}$ defined as follows. Let $\sI_{\partial X} \subset \sO_X$ be the ideal subsheaf of functions vanishing along $\partial X$, and let $\iota: \partial X \to X$ be the inclusion. Define $\sO_{\partial X}$ as the inverse image 
\[
\sO_{\partial X} \coloneq \iota^{-1} (\sO_X /\sI_{\partial X} ).
\]
Concretely, for an open set $V \subset \partial X$, the ring $\sO_{\partial X}(V)$ consists of functions which extend to smooth functions on an open subset $U \subset X$ such that $U \cap \partial X = V$. The inclusion map is then a morphism of ringed spaces:
\[
\iota : ( \partial X , \sO_{\partial X} ) \to (X , \sO_{X} ).
\]

In addition, the \textit{conormal sheaf} of $\partial X$ is defined as 
\[
    N_{\partial X}^\ast = \iota^{-1} ( \sI_{\partial X}/\sI_{\partial X}^2) 
\]
which is naturally an $\sO_{\partial X}$--module. The \textit{normal sheaf} of $\partial X$ is then defined as the dual of $N_{\partial X}^\ast$ as an $\sO_{\partial X}$--module: 
\[
N_{\partial X} := \Hom_{\sO_{\partial X}} (    N_{\partial X}^\ast, \sO_{\partial X} ).
\]
Let $\iota^*TX = \iota^{-1}TX \otimes_{\iota^{-1}\sO_X} \sO_{\partial X}$ be the restriction of $TX$ to $\partial X$ thought of as an $\sO_{\partial X}$--module. 
There is a natural morphism of $\sO_{\partial X}$--modules $\iota^{\ast} TX \to  N_{\partial X}$ given by applying vector fields to functions in $\sI_{\partial X}$ and restricting to $\partial X$. The \textit{tangent sheaf} $T \partial X$ of $\partial X$ is defined as the kernel of this morphism:
\begin{equation}
 \label{normal}
    \begin{tikzcd}
    0 \ar{r} &  T \partial X \ar{r} & \iota^\ast TX \ar{r}&  N_{\partial X} \ar{r} & 0.
    \end{tikzcd}
\end{equation}
Concretely, for an open subset $V \subset \partial X$ we can identify $(T \partial X )(V)$ with sections over $V$ of the bundle $TX$ which are tangent to the strata of $\partial X$ and which admit an extension to smooth vector fields on an open subset $U \subset X$ with $U \cap \partial X = V$. In particular, if $S$ is a stratum of $\partial X$ and $\iota_{S} : S \to \partial X$ is the inclusion, then $\iota_S \colon (S,\sO_S) \to (\partial X, \sO_{\partial X})$ is a morphism of ringed spaces and applying the functor $\iota_{S}^\ast $ to \eqref{normal} recovers the familiar exact sequence of vector bundles for the submanifold $S \subset X$:
\[
    \begin{tikzcd}
    0 \ar{r} & TS \ar{r} & TX|_S \ar{r} & N_S \ar{r} & 0.
    \end{tikzcd}
\]

\begin{example}
    Let $X = \mathbb{R}^{n,m} = \mathbb{R}^{n-m}\times \mathbb{R}^{m}_{\geq 0}$ with coordinates $(x,y)$. Then, on any neighborhood of $(0,0) \in X$, the sheaf $T\partial X$ is generated by $\partial/\partial_{x_1}, \ldots , \partial/\partial_{x_{n-m}}$ and $N_{\partial X}$ by $\partial/\partial_{y_1} , \ldots , \partial/\partial_{y_m}$. In particular, if $X$ is a manifold with boundary, then the smooth vector bundles corresponding to the $\sO_{\partial X}$--modules $T\partial X$ and $N\partial X$ are the tangent and normal bundles of $\partial X$ as a submanifold.
\end{example}

\subsection{Logarithmic tangent bundle}

There is another notion of the tangent bundle to a manifold with corners, which will be useful in our discussion. We refer to \cite[\S 2.3]{joyce-tangent} for further details.

\begin{definition}
A \textit{logarithmic vector field} on $X$ is a section $V$ of $TX$ which is \emph{tangent to the boundary} in the following sense. For any corner chart $U \subset \mathbb{R}^{n,m}$ on $X$, and any $p \in U$, if $p$ lies in a stratum $S$ of $X$ then $V_p$ is tangent to $T_p S$. The \textit{logarithmic tangent bundle} (or \emph{$b$-tangent bundle} \cite{melrose}) is the vector bundle $\leftindex^{b} TX \to X$ associated to the sheaf of logarithmic vector fields on $X$, which is a locally free sheaf of $\mathscr{O}_X$--modules. 
\end{definition}

\begin{example}
Let $X = \mathbb{R}^{n,m} = \mathbb{R}^{n-m}\times \mathbb{R}^{m}_{\geq 0}$ with coordinates $(x,y)$. Then, on any neighborhood of $(0,0) \in X$, $\leftindex^{b} T\partial X$ is generated as an $\sO_X$--module by the logarithmic vector fields
\[
\frac{\partial}{\partial x_1} , \ldots , \frac{\partial}{\partial x_{n-m}} , y_1 \frac{\partial }{\partial y_1} , \ldots , y_m \frac{\partial }{\partial y_m}.
\]
\end{example}

As vector bundles, the ordinary and logarithmic tangent bundles are isomorphic, but \textit{not in a canonical fashion}. More precisely, there is a short exact sequence of $\mathscr{O}_X$--modules
\begin{equation}
    \label{ses1}
    \begin{tikzcd}
    0 \ar{r} & \leftindex^{b} {T} X \ar{r} & TX \ar{r}& \iota_\ast N_{\partial X} \ar{r}& 0
    \end{tikzcd}
\end{equation}
where $N_{\partial X}$ is the normal sheaf of $\partial X$. Since $i_*N_{\partial X}$ is supported on $\partial X$, it follows that $\leftindex^{b}{T}X$ and $TX$ are canonically isomorphic as vector bundles on $X\setminus\partial X$. This isomorphism can be extended to $X$, for example by retracting $X$ on a subset of $X\setminus\partial X$, but the extension is not canonical.

The restriction of $\leftindex^{b} {T} X$ to the boundary can be described as follows. If $S \subset \partial X$ is a stratum, then a logarithmic vector field on $X$ is tangent to $S$; hence there is a canonical short exact sequence of vector bundles
\begin{equation}
    \label{ses2}
    \begin{tikzcd}
    0 \ar{r} & \leftindex^{b} {N}_S \ar{r} & \leftindex^{b} {T}X|_{S} \ar{r} & TS \ar{r} & 0 
    \end{tikzcd}
\end{equation}
where $\leftindex^{b} {N}_S$, the \textit{logarithmic normal bundle} to $S$, is defined by this exact sequence. More generally, there is a natural morphism of $\mathscr{O}_{\partial X}$--modules $\iota^\ast \leftindex^{b} {T}X \to T \partial X$. The  \textit{logarithmic normal sheaf} $\leftindex^{b} {N}_{\partial X}$ to $\partial X$ is defined as the kernel of this morphism. Hence, it fits into a short exact sequence of $\mathscr{O}_{\partial X}$--modules 
\[
    \begin{tikzcd}
    0 \ar{r} & \leftindex^{b} {N}_{\partial X} \ar{r} & \iota^\ast \leftindex^{b} {T}X \ar{r} & T\partial X \ar{r} & 0.
    \end{tikzcd}
\]
This sequence recovers (\ref{ses2}) upon applying $\iota_{S}^\ast$ where $\iota_{S} : (S,\sO_S) \to (\partial X, \sO_{\partial_X})$ is the morphism of ringed spaces given by inclusion.

\begin{example}
    Let $X = \mathbb{R}^{n,m} = \mathbb{R}^{n-m}\times \mathbb{R}^{m}_{\geq 0}$ with coordinates $(x,y)$. Then, on any neighborhood of $(0,0) \in \partial X$, the sheaf $\leftindex^{b} N_{\partial X}$ is generated as an $\sO_{\partial X}$--module by 
    \[
     y_1 \frac{\partial }{\partial y_1} , \ldots , y_m \frac{\partial }{\partial y_m}.
    \]
\end{example}

Finally, the \textit{logarithmic cotangent bundle} of $X$ is the dual of the logarithmic tangent bundle, $\leftindex^{b} {T}^\ast X = (\leftindex^{b} {T}X )^\ast$, and its sections are the \textit{logarithmic $1$--forms}. Similarly, the sections of the $k$--th exterior power of $\leftindex^{b} {T}^\ast X$ are the logarithmic differential $k$--forms on $X$.

\begin{example}
    Let $X = \mathbb{R}^{n,m} = \mathbb{R}^{n-m}\times \mathbb{R}^{m}_{\geq 0}$ with coordinates $(x,y)$. Then, on any neighborhood of $(0,0) \in X$, $\leftindex^{b} T^\ast X$ is generated as an $\sO_X$--module by the logarithmic $1$-forms:
    \[
    d x_1 , \ldots , d x_{n-m} , \frac{dy_1 }{ y_1} , \ldots ,  \frac{d y_m }{ y_m}.
    \]
\end{example}

\subsection{Compactifications}

Let $M$ be a non-compact $n$--dimensional manifold and $f \colon M \to \R$ a smooth function. In favorable cases, we can tame the behavior of $f$ at infinity by compactifying $M$ to a manifold with corners $\overline M$ and controling the rate at which $f$ blows up along $\partial \overline M$.

\begin{definition}
\label{definition:compactification}
~
    \begin{enumerate}
    \item
    Let $X$ be manifold with corners. A \emph{boundary function} on $X$ is a smooth function $\tau \colon X \to [0,1]$ such that every point $p \in \partial X$ has a corner chart $U \subset \R^{n,m}$ centered at $p$ in which
    \begin{equation}
        \label{eq:boundaryfunction}
        \tau (x,y) = \prod_{i=1}^m y_i^{\alpha_i}
    \end{equation}
    where $(x,y)$ are coordinates on $\R^{n,m} = \mathbb{R}^{n-m}\times \mathbb{R}^{m}_{\geq 0}$ and
    $\alpha_1,\ldots,\alpha_m \in \N_{>0}$.
    \item
    Let $M$ be a smooth manifold and $f \colon M \to \R$ a smooth function.
    A \emph{compactification} of $(M,f)$ consists of a compact manifold with corners $\overline M$ and a smooth function $\overline f \colon \overline M \to \R$ such that $M$ is diffeomorphic to the interior of $\overline M$ and $\overline f / f$ extends to a boundary function $\tau$ on $\overline M$. 
    \item
    Given a compactification $(\overline M, \overline f)$ of $(M,f)$, a \emph{critical point at infinity} is a point $p \in \partial \overline M$ such that $\overline f(p) =0$ and $p$ is a critical point of $\overline f |_S$ where $S \subset \partial \overline M$ is a smooth stratum containing $p$, that is: $p$ is a stratified critical point of $\overline f|_{\partial \overline M}$ with critical value zero. 
    \end{enumerate}
\end{definition}

The significance of this definition lies in the following result.

\begin{proposition}\label{proposition:compactness}
    If $(\overline{M}, \overline f)$ is a compactification of $(M, f)$, then the union of the critical points of $f$ and critical points at infinity is a compact subset of $\overline M$.
\end{proposition}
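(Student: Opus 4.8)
The plan is to realize the set in question as the zero locus of a single smooth section of the logarithmic cotangent bundle $\leftindex^{b} {T}^\ast\overline M$, and then conclude by compactness of $\overline M$: a closed subset of a compact space is compact. First I would note that the logarithmic differential $d_b\tau$ of the boundary function --- i.e.\ the image of $d\tau$ under the natural map $T^\ast\overline M \to \leftindex^{b} {T}^\ast\overline M$ dual to the inclusion $\leftindex^{b}{T}\overline M \hookrightarrow T\overline M$ --- is divisible by $\tau$: in a corner chart $U \subset \R^{n,m}$ in which $\tau = \prod_{i=1}^m y_i^{\alpha_i}$, applying a logarithmic vector field to $\tau$ gives $y_i\partial_{y_i}\tau = \alpha_i\tau$ and $\partial_{x_j}\tau = 0$, so $d_b\tau = \tau\cdot\sum_{i=1}^m \alpha_i \tfrac{dy_i}{y_i}$ with $\sum_i\alpha_i\tfrac{dy_i}{y_i}$ a smooth section of $\leftindex^{b}{T}^\ast U$. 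Since $\tau$ is nowhere zero on the dense interior, these local quotients agree on overlaps and patch to a global logarithmic $1$-form $\omega := d_b\tau/\tau$ (which equals $d\log\tau$ on the interior). I would then set $\beta := d_b\overline f - \overline f\,\omega$, a global smooth section of $\leftindex^{b}{T}^\ast\overline M$, and claim that its zero locus is exactly the union $K$ of $\mathrm{Crit}(f)$ with the critical points at infinity.

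To prove the claim I would compute $\beta$ separately on the interior and on the boundary. On $M$, where $\overline f = \tau f$, the Leibniz rule gives $d_b\overline f = f\,d_b\tau + \tau\,d_b f = \overline f\,\omega + \tau\,d_b f$, hence $\beta = \tau\,d_b f$; since $\tau>0$ on $M$ and $d_b f$ agrees with $df$ there (the two cotangent bundles being canonically identified on the interior), $\{\beta=0\}\cap M = \mathrm{Crit}(f)$. On the boundary I would fix $p\in\partial\overline M$ lying in its stratum $S$ of depth $k\geq 1$ and choose a corner chart $(x,y)$ adapted to $S$, so that near $p$ one has $\tau = \prod_{i=1}^m y_i^{\alpha_i}$ with $y_1=\dots=y_k=0 < y_{k+1},\dots,y_m$ at $p$ and $S=\{y_1=\dots=y_k=0\}$. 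In the logarithmic coframe $\beta = \sum_j \partial_{x_j}\overline f\, dx_j + \sum_i \big( y_i\,\partial_{y_i}\overline f - \alpha_i\overline f \big)\tfrac{dy_i}{y_i}$. Evaluating at $p$: the $dx_j$-components are $\partial_{x_j}\overline f(p)$; the $\tfrac{dy_i}{y_i}$-components for $i\leq k$ equal $-\alpha_i\overline f(p)$; and those for $i>k$ equal $y_i(p)\,\partial_{y_i}\overline f(p)-\alpha_i\overline f(p)$. Because $\alpha_i>0$ and $k\geq 1$, vanishing of $\beta$ at $p$ forces $\overline f(p)=0$; granting this, the remaining equations become $\partial_{x_j}\overline f(p)=0$ for all $j$ and $\partial_{y_i}\overline f(p)=0$ for $i>k$, i.e.\ $d(\overline f|_S)(p)=0$. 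Hence $\beta_p=0$ iff $\overline f(p)=0$ and $p$ is a stratified critical point of $\overline f|_{\partial\overline M}$, that is, a critical point at infinity; the reverse inclusion is immediate from the same formula.

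Combining the two computations, $K = \{\beta=0\}$ is the preimage of the zero section under the continuous section $\beta$, hence a closed subset of $\overline M$; as $\overline M$ is compact, $K$ is compact. The point requiring care --- and the reason the logarithmic bundle is indispensable --- is that the naive pole-order-one $1$-form $\tau\,d\overline f - \overline f\,d\tau = \tau^2\,df$ vanishes identically along $\partial\overline M$ (both $\tau$ and $d_b\tau$ vanish there), so it cannot see the critical points at infinity: one genuinely has to divide by $\tau$ once more, which is legitimate only after passing to $\leftindex^{b}{T}^\ast\overline M$, where $\omega = d_b\tau/\tau$ is nonvanishing on the boundary. The only real computation is the boundary identification above, where the positivity $\alpha_i>0$ is exactly what pins down the condition $\overline f(p)=0$ and hence the match with the definition of a critical point at infinity.
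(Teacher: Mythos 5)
Your proposal is correct and follows essentially the paper's argument: your global logarithmic $1$--form $\beta = d_b\overline f - \overline f\, d\tau/\tau$ is exactly the logarithmic extension of $\tau\, df$ from \eqref{tau df}, and your corner-chart computation on a boundary stratum (with positivity of the $\alpha_i$ forcing $\overline f(p)=0$) is the same identification as \eqref{restrict_S}. The only difference is packaging: by proving both inclusions and exhibiting the set as the zero locus of the continuous section $\beta$, you get closedness of the set of critical points at infinity for free, a point the paper asserts without proof before running its sequential limit argument.
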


\begin{corollary}
    If $(M,f)$ admits a compactification without critical points at infinity, then the set of critical points of $f$ is compact. 
\end{corollary}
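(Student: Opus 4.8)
Since the corollary follows immediately from \autoref{proposition:compactness} --- if $(M,f)$ admits a compactification with no critical points at infinity, the compact set produced by that proposition is exactly $\Crit(f)$ --- I will describe how I would prove the proposition. The plan is to realize the set $Z := \Crit(f) \cup \{\text{critical points at infinity}\}$ as the zero locus of a single continuous section of a vector bundle over the compact manifold with corners $\overline M$; such a zero locus is closed, hence compact. The relevant bundle is the logarithmic cotangent bundle $\leftindex^{b}T^\ast\overline M$, and the section, restricted to the interior $M$, is $\tau\,df$: the crucial point is that although $df$ has poles along $\partial\overline M$, the product $\tau\,df$ extends smoothly over $\overline M$ as a \emph{logarithmic} $1$--form.

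First I would verify this extension. On $M$ we have $f = \overline f/\tau$ with $\tau > 0$, hence $\tau\,df = d\overline f - \overline f\,(d\tau/\tau)$. Around any boundary point, the definition of a boundary function supplies a corner chart $U \subset \R^{n,m}$ centered there in which $\tau = \prod_{j=1}^m y_j^{\alpha_j}$ with $\alpha_j \in \N_{>0}$; there $d\tau/\tau = \sum_j \alpha_j\,dy_j/y_j$ and $d\overline f = \sum_k \partial_{x_k}\overline f\,dx_k + \sum_j (y_j\,\partial_{y_j}\overline f)\,dy_j/y_j$, so
\[
\tau\,df = \sum_k \frac{\partial\overline f}{\partial x_k}\,dx_k + \sum_j\Bigl(y_j\frac{\partial\overline f}{\partial y_j} - \alpha_j\overline f\Bigr)\frac{dy_j}{y_j}.
\]
In view of the logarithmic coframe $dx_1,\dots,dx_{n-m},dy_1/y_1,\dots,dy_m/y_m$ for $\leftindex^{b}T^\ast\overline M$, the right-hand side is manifestly a smooth section of $\leftindex^{b}T^\ast\overline M$ over $U$ (here $y_j\,\partial_{y_j}\overline f$ is smooth since $\overline f$ is). These local sections agree on overlaps, as each restricts to $\tau\,df$ on the dense open set $M$, so they glue to a global smooth section $\beta \in \Gamma(\leftindex^{b}T^\ast\overline M)$ with $\beta|_M = \tau\,df$.

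Next I would compute the zero set of $\beta$. On $M$, since $\tau > 0$, $\beta_p = 0$ iff $df_p = 0$, i.e.\ $p \in \Crit(f)$. For $p \in \partial\overline M$, take a corner chart centered at $p$ as above, so that $p$ is the origin and the stratum $S \subset \partial\overline M$ containing $p$ is $\{y = 0\}$. Evaluating the coordinate formula at $p$, where the factor $y_j$ kills the terms $y_j\,\partial_{y_j}\overline f$, gives $\beta_p = \sum_k \partial_{x_k}\overline f(p)\,dx_k|_p - \overline f(p)\sum_j \alpha_j\,(dy_j/y_j)|_p$. Since the logarithmic coframe is a basis of $\leftindex^{b}T^\ast_p\overline M$ and every $\alpha_j > 0$, this vanishes exactly when $\partial_{x_k}\overline f(p) = 0$ for all $k$ and $\overline f(p) = 0$; the first condition says $p \in \Crit(\overline f|_S)$ (as $\overline f|_S$ is $\overline f(x,0)$ in these coordinates), so together with $\overline f(p) = 0$ it is precisely the condition that $p$ be a critical point at infinity. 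Hence $Z = \beta^{-1}(\text{zero section})$, which is closed in $\overline M$ by continuity of $\beta$, and therefore compact.

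The one step I expect to need genuine insight is the extension in the second paragraph: both $f$ and $df$ blow up along $\partial\overline M$, and the right way to absorb this is to multiply by the boundary function $\tau$ and observe that the product then lies in the logarithmic cotangent bundle rather than the ordinary one --- with the extra covectors $dy_j/y_j$ being exactly what detects the condition $\overline f(p) = 0$ in the definition of a critical point at infinity. The remainder is a routine computation in the logarithmic frame plus the standard fact that a continuous section of a vector bundle over a compact space has compact vanishing set; in particular no completeness or Palais--Smale hypothesis enters at this stage.
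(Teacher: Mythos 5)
Your proof is correct and rests on exactly the same key idea as the paper's: the identity $\tau\,df = d\overline f - \overline f\,(d\tau/\tau)$ shows $\tau\,df$ extends to a smooth logarithmic $1$--form on $\overline M$, and the coordinate computation in the logarithmic coframe (using $\alpha_j>0$) identifies its boundary zeros with critical points at infinity. The only difference is packaging --- you characterize the full zero locus of a global section and invoke closedness, whereas the paper argues via sequences of critical points converging to $\partial\overline M$ --- which is a cosmetic, not substantive, distinction.
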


\begin{proof}[Proof of \autoref{proposition:compactness}]
    The set of critical points at infinity is closed in $\partial\overline M$. Since $\overline M$ is compact, it remains to prove that if $(p_n)$ is a sequence of critical points of $f$ converging to a point $p \in \partial\overline M$, then $p$ is a critical point at infinity.
    
    Let $\tau = \overline f / f$ be the boundary function of the compactification. First, observe that $d\tau / \tau$ is a logarithmic $1$--form on $\overline M$. Indeed, in a corner chart $U \subset \mathbb{R}^{n,m}$ in which $\tau$ has the form (\ref{eq:boundaryfunction}), 
    \begin{align}
        \frac{d \tau}{\tau} = \sum_{i = 1}^{m} \alpha_i \frac{dy_i}{y_i}. \label{dtau/tau} 
    \end{align}
    On $M$, we have
    \begin{align}
        \tau df = d\overline{f} - \overline{f} \cdot \frac{d\tau}{\tau}\label{tau df}
    \end{align}
    which shows that $\tau df$ extends to logarithmic 1-form on $\overline M$. 
    Evaluating \eqref{tau df} at $p_n$ and taking the limit $n\to\infty$ yields
    \begin{equation}
        \label{critical at infinity}
        d\overline f(p) = 0 \quad\text\quad \overline f(p) = 0
    \end{equation}
    since $d\tau/\tau$ is nowhere vanishing as a logarithmic $1$--form by \eqref{dtau/tau}. 

    Let $S$ be the stratum of $\partial\overline M$ containing $p$. The restriction of $\tau df$ to $S$ is well-defined as a section of $\leftindex^{b}{T}^\ast \overline{M}|_{S}$ and given by 
    \begin{align}
        (\tau df)|_{S} = d (\overline{f}|_{S}) - \overline{f} \frac{d\tau}{\tau}|_{S}\label{restrict_S},
    \end{align}
    where $d ( \overline{f}|S ) \in \Gamma (S , T^\ast S )$ is regarded as an element of $\Gamma ( S , \leftindex^{b} {T}^\ast \overline{M}|_{S} )$ using the canonical bundle embedding $T^\ast S \hookrightarrow \leftindex^{b} {T}^\ast \overline{M}|_S$, which is obtained by dualising the map on the right-hand side of \eqref{ses2}.
    It follows from this and \eqref{critical at infinity} that $p$ is a critical point at infinity.
\end{proof}

\subsection{Riemannian metrics and the Palais--Smale condition}
\label{subsection: palais-smale}

Morse theory requires a choice of a Riemannian metric. When the underlying manifold is non-compact, the \emph{Palais--Smale condition} plays a key role in proving compactness for the moduli spaces of gradient trajectories. 

\begin{definition}
    A function $f \colon M \to \R$ and a complete Riemannian metric $g$ a non-compact manifold $M$ are said to satisfy the \text{Palais--Smale condition} if every sequence $(p_n)$ in $M$ such that $f(p_n)$ is bounded and $|\nabla f(p_n)| \to 0$ has a convergent subsequence. 
\end{definition}

Let $(\overline{M},\overline f)$ be a compactification of $(M, f)$, with boundary function $\tau = \overline{f}/f$. We will construct an appropriate Riemannian metric on $M$ from a logarithmic metric on $\overline M$. 

\begin{definition}
A \textit{logarithmic Riemannian metric} on $\overline{M}$ is a smooth bundle metric $\leftindex^{b} {g}$ on the logarithmic tangent bundle $\leftindex^{b} {T}\overline{M}$.
\end{definition}

Since $\leftindex^{b} {T}\overline{M}$ and $T\overline M$ are canonically isomorphic over $M$, a logarithmic Riemannian metric $\leftindex^{b} {g}$ on $\overline{M}$ yields an ordinary Riemannian metric on $M$. It will be more convenient to consider its \textit{associated cone metric} $g = \tau^{-1} \cdot \leftindex^{b} {g}$, which is also a Riemannian metric on $M$.

\begin{example}
    \label{example:logarithmic metric}
    Let $X = \mathbb{R}^{n,m} = \mathbb{R}^{n-m}\times \mathbb{R}^{m}_{\geq 0}$ with coordinates $(x,y)$. Then, on any neighborhood of $(0,0) \in X$, a logarithmic Riemannian metric has the form
    \[
        \leftindex^{b} {g} = \sum_{i,j =1}^{n-m} a_{ij}  dx_i \otimes dx_j  + \sum_{i=1}^{n-m}\sum_{j =1}^m b_{ij}  \left( dx_i \otimes \frac{dy_j}{y_j} + \frac{dy_j}{y_j}  \otimes dx_i  \right) + \sum_{i,j=1}^{m} c_{ij} \frac{dy_i}{y_i} \otimes \frac{dy_j}{y_j} 
    \]
    for smooth coefficient functions $a_{ij}$, $b_{ij}$, $c_{ij}$ on $\R^{n,m}$, with $a_{ij}$ and $c_{ij}$ symmetric. The non-degeneracy of $\leftindex^{b} {g}$ implies that the matrices $(a_ij)$ and $(c_ij)$ are non-degenerate. 
\end{example}

\begin{example}
    Let $X$ be a compact manifold with boundary. Any Riemannian metric on $X\setminus\partial X$ which on the collar $(0,1) \times \partial X$ has the form
    \[
        \leftindex^{b} {g}  = g_{\partial X} + \frac{1}{y^2} dy^2,
    \]
    where $g_{\partial X}$ is a Riemannian metric on $\partial X$ and $y \in [0,1)$ is the coordinate on the collar, extends to a logarithmic Riemannian metric on $X$. If $\tau$ is a boundary function of the form $\tau = y$ on the collar, then the associated cone metric is
    \[
        g = \tau^{-1} \cdot \leftindex^{b} {g}  = \frac{1}{y} g_{\partial X} + \frac{1}{y^3} dy^2.
    \]
    After change of coordinates $y = r^{-2}$, 
    \[
        g = r^2 g_{\partial X} + dr^2,
    \]
    which shows that $g$ is a Riemannian metric on $X\setminus\partial X$ with a conical end $r \to \infty$. 
\end{example}

\begin{proposition}\label{proposition:PS}
    Let $(\overline{M}, \overline{f})$ be a compactification of $(M , f)$ with boundary function $\tau = \overline f / f$. Let $\leftindex^{b} {g}$ be a logarithmic Riemannian metric on $\overline{M}$.  
    \begin{enumerate}
        \item Both $\leftindex^{b} {g}$ and its associated cone metric $g = \tau^{-1} \cdot \leftindex^{b} {g}$ are complete Riemannian metrics on $M$. 
        \item If $f$ has no critical points at infinity, then the critical locus of $f$ is compact and the pair $(f,g)$ satisfies the Palais--Smale condition on $M$.
    \end{enumerate}
\end{proposition}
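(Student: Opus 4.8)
The plan is to reduce both claims to the local normal forms for boundary functions and logarithmic metrics, combined with the analysis of $\tau\,df$ already carried out in the proof of \autoref{proposition:compactness}.

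For part (1) I would invoke the standard completeness criterion: a Riemannian metric $h$ on $M$ is complete as soon as $M$ admits a proper function $\rho$ with $|d\rho|_h$ bounded, since then any path of finite $h$-length changes $\rho$ by a bounded amount, hence remains in a compact sublevel set of $\rho$, and Hopf--Rinow applies. The natural choice is $\rho = -\log\tau$: it is smooth and positive on $M$ (using $\tau > 0$ on the interior) and proper on $M$, because $\tau$ extends continuously to the compact space $\overline M$ with $\tau^{-1}(0) = \partial\overline M$. By \eqref{dtau/tau}, $d\rho = -d\tau/\tau$ is a logarithmic $1$-form on $\overline M$, so $p \mapsto |d\rho(p)|_{\leftindex^{b}{g}}$ is a continuous, hence bounded, function on the compact $\overline M$; this gives completeness of $\leftindex^{b}{g}$. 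For the cone metric $g = \tau^{-1}\cdot\leftindex^{b}{g}$ the dual metric on $1$-forms is $\tau\cdot(\leftindex^{b}{g})^\ast$, so $|d\rho|_g = \tau^{1/2}|d\rho|_{\leftindex^{b}{g}}$ is bounded as well (as $\tau \le 1$), and $g$ is complete too.

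Part (2) has two halves. Compactness of $\Crit(f)$ is immediate from \autoref{proposition:compactness}: the union of $\Crit(f)$ with the critical points at infinity is compact, and by hypothesis there are no critical points at infinity. For the Palais--Smale condition, set $\omega := \tau\,df$; by \eqref{tau df} this extends to a smooth logarithmic $1$-form on $\overline M$, i.e.\ a global section of $\leftindex^{b}{T}^\ast\overline M$. On $M$, since $g = \tau^{-1}\cdot\leftindex^{b}{g}$ and the dual metrics therefore differ by the factor $\tau$ on $1$-forms,
\[
    |\omega|_{\leftindex^{b}{g}} \;=\; \tau\,|df|_{\leftindex^{b}{g}} \;=\; \tau^{1/2}\,|df|_g \;=\; \tau^{1/2}\,|\nabla_g f|_g .
\]
Next I would reuse the computation inside the proof of \autoref{proposition:compactness}: by \eqref{restrict_S}, and since $d\tau/\tau$ is nowhere vanishing as a logarithmic $1$-form, for a point $p$ in a stratum $S \subset \partial\overline M$ one has $\omega(p) = 0$ exactly when $\overline f(p) = 0$ and $d(\overline f|_S)(p) = 0$, that is, exactly when $p$ is a critical point at infinity. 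Since there are none, $\omega$ is nowhere zero on the compact set $\partial\overline M$, so the continuous function $|\omega|_{\leftindex^{b}{g}}$ on $\overline M$ is bounded below by some $c > 0$ on a neighbourhood of $\partial\overline M$; hence $K := \{\, p \in \overline M : |\omega(p)|_{\leftindex^{b}{g}} \le c \,\}$ is a closed subset of $\overline M$ disjoint from $\partial\overline M$, i.e.\ a compact subset of $M$. Finally, if $(p_n) \subset M$ satisfies that $f(p_n)$ is bounded and $|\nabla_g f(p_n)|_g \to 0$, the displayed identity gives $|\omega(p_n)|_{\leftindex^{b}{g}} = \tau(p_n)^{1/2}\,|\nabla_g f(p_n)|_g \le |\nabla_g f(p_n)|_g \to 0$, so $p_n \in K$ for all large $n$ and a subsequence converges in $K \subset M$; this verifies Palais--Smale.

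The step carrying the real content --- everything else being formal --- is the identity $|\tau\,df|_{\leftindex^{b}{g}} = \tau^{1/2}|\nabla_g f|_g$ together with the dichotomy imported from \autoref{proposition:compactness}. Morally, passing to a non-compact manifold usually destroys the Palais--Smale condition, but here the absence of critical points at infinity forces $\nabla_g f$ to blow up along $\partial\overline M$ at the rate $\tau^{-1/2}$, and it is precisely this controlled blow-up that drives any almost-critical sequence back into a fixed compact subset of $M$. The only genuine subtlety is the bookkeeping between $\leftindex^{b}{g}$, the cone metric $g$, and the identification $\leftindex^{b}{T}\overline M \cong T\overline M$ over $M$, which is cleanest to organise by viewing $\omega$ and its norm on the logarithmic cotangent bundle, where they are literally continuous functions on the compact $\overline M$.
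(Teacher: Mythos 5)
Your proposal is correct; part (2) is essentially the paper's argument, while part (1) takes a genuinely different route to completeness. For the Palais--Smale half you work with the logarithmic $1$-form $\omega=\tau\,df$ and its $\leftindex^{b}{g}$-norm, which is literally the same quantity as the paper's $\leftindex^{b}{|\nabla f|}$ in \eqref{norm2} (the gradient $\nabla_g f$ is $\leftindex^{b}{g}$-dual to $\omega$ by \eqref{tau df}), and the identification ``$\omega(p)=0$ iff $p$ is a critical point at infinity'' via \eqref{restrict_S} and the non-vanishing of the conormal component of $d\tau/\tau$ from \eqref{dtau/tau} is exactly the step the paper uses; your explicit compact set $K$ replaces the paper's argument by contradiction with a subsequence converging to a boundary point, which is only a cosmetic difference (just shrink the constant, e.g.\ take $c$ to be half the minimum of $|\omega|_{\leftindex^{b}{g}}$ over $\partial\overline M$, so that $K$ is genuinely disjoint from $\partial\overline M$). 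For completeness, the paper estimates the length of a divergent path directly in a corner chart, using the normal form of the logarithmic metric and Cauchy--Schwarz to get a divergent $\ln(y_i)$ term, and reduces $g$ to $\leftindex^{b}{g}$ via $\tau\le 1$; you instead invoke the criterion that a proper function with bounded differential forces completeness, applied to $\rho=-\log\tau$, whose differential $-d\tau/\tau$ is a global smooth logarithmic $1$-form and hence has bounded norm on the compact $\overline M$, with the cone metric handled by the dual-metric rescaling $|{\cdot}|_g=\tau^{1/2}|{\cdot}|_{\leftindex^{b}{g}}$ on $1$-forms. This is cleaner in that it avoids chart-by-chart estimates and treats both metrics uniformly; its only extra inputs are the (standard, true) completeness criterion and the observations that $\tau$ vanishes on all of $\partial\overline M$ (immediate from the normal form \eqref{eq:boundaryfunction}) and is positive on $M$ (implicit in the statement, since otherwise the cone metric $g=\tau^{-1}\cdot\leftindex^{b}{g}$ would not be defined on $M$), which together give properness of $\rho$.
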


\begin{proof}
    (1) To prove completeness, let $\gamma \colon [0,T) \to M$ be a smooth path such that $\gamma(t)$ has no limit in $M$ as $t\to T$. We need to show that $\gamma$ has infinite length with respect to both $\leftindex^{b} {g}$ and $g$. Since $g = \tau^{-1} \cdot \leftindex^{b} {g} \gtrsim \leftindex^{b} {g} $ on $M$, it suffices to prove this only for $\leftindex^{b} {g}$.
    
    Since $\overline M$ is compact, there is a sequence $t_n$ converging to $T$ such that $\gamma(t_n)$ converges to a point $p \in \partial \overline M$. Let $U \subset \R^{m,n}$ be a corner chart centered at $p$ and let $g$ be an admissible metric of the form described in \autoref{example:logarithmic metric}. 
    For $t$ close to $T$, $\gamma(t)$ is contained in $U$, in which case we denote $\gamma(t) = (x(t), y(t))$, where $(x,y)$ are coordinates on $\R^{m,n}$. Note that  $(x(t_)n),y(t_n)) \to (0,0)$ as $n \to \infty$. 
    By the Cauchy--Schwarz inequality, there is $s_0 < T$ such that for $t \in [s_0, T)$, 
    \[
        \leftindex^{b}{g}(\dot\gamma(t), \dot\gamma(t))^{1/2} \geq c \sum_{i=1}^k \frac{ \dot y_i(t)}{y_i(t)},
    \]
    so that the length of $\gamma \colon [s_0,s] \to U$ is
    \[
        \int_{s_0}^s   \leftindex^{b} {g} (\dot\gamma(t), \dot\gamma(t))^{1/2} \rd t \geq c   \sum_{i=1}^k \int_0^s \frac{ \dot y_i(t)}{y_i(t)} \rd t \geq c \sum_{i=1}^k \ln( y_i(s) / y_i(s_0) ).
    \]
    The right-hand side diverges to infinity for $s=t_n$ as $n\to\infty$. Therefore, $\gamma$ has infinite length with respect to $\leftindex^{b} {g}$, as we wanted to prove. 

    (2) To verify the Palais--Smale condition for $(f,g) $ in the absence of critical points at infinity, note that the gradient $\nabla f$ with respect to $g = \tau^{-1} \leftindex^{b} {g}$ is given by $\nabla f = \tau \leftindex^{b} {\nabla}f $ where $\leftindex^{b} {\nabla}f$ is the gradient with respect to $\leftindex^{b} {g}$. This vector field is dual to the logarithmic $1$--form $\tau df$ with respect to $\leftindex^{b} g$, see \eqref{tau df}. Hence $\nabla f $ is a logarithmic vector field. On $M$, we have
    \begin{align}
        | \nabla f|^2 = \tau^{-1} \cdot \leftindex^{b}| \nabla f|^2 \label{norm2}
    \end{align}
    Let $p_n  \in M$ be a sequence with $f(p_n )$ bounded and $|\nabla f (p_n ) |\to 0$. Suppose that no subsequence of $p_n$ converges in $M$. Then a subsequence must converge to a point $p \in S$ for some stratum of $\partial\overline{M}$. Evaluating \eqref{norm2} at $p_n$ and taking the limit $n\to\infty$ shows that $\leftindex^{b}| \nabla f(p)| =0$. In other words, the logarithmic vector field $\nabla f$, as a smooth section of $\leftindex^{b}{T}\overline M$ over $\overline M$,  vanishes at $p$. In turn, by (\ref{restrict_S}) this says that $p$ is a critical point of $f$ at infinity, which is a contradiction. Thus, a subsequence of $p_n$ converges in $M$, which establishes the Palais--Smale condition.
\end{proof}
\section{Morse homology in the absence of critical points at infinity}
\label{section:morsehomology}

Let $f \colon M \to \R$ be a smooth function on a non-compact manifold. Using the framework from the previous section, it readily follows that when $(M,f)$ admits a compactification without critical points at infinity, in the sense of \autoref{definition:compactification}, then the Morse homology of (a small perturbation of) $f$ is well defined and isomorphic to the singular homology $H_\ast (M , \{ f \leq -c\})$ for $c \gg 0$. 

In this section we shall study, more generally, the Morse homology groups for such functions $f$ constructed only using critical points in a given action window $(a,b) \subset \mathbb{R}$, where $a,b$ are regular values of $f$. Furthermore, to a homotopy of functions of this kind we associate continuation maps between Morse homology groups. This will be crucial for treating the more delicate case when $f$ has critical points at infinity, which is the main goal of this article. The construction of continuation maps in action windows requires some care, and for this we will make crucial use of the framework from the previous section and certain `slow' time dependent gradient equation (also considered by Floer in \cite{floer-spheres}).

\subsection{Parameter space}

Throughout this section, let $\overline M$ be a compact manifold with corners and let $M = \overline M \setminus \partial\overline M$ be the interior of $\overline M$. Fix a boundary function $\tau \colon \overline M \to [0,1]$, as in \autoref{definition:compactification}, and  extended real numbers $a , b \in\mathbb{R}\cup\{\pm \infty\}$ with $a < b$.

\begin{definition}
    \label{definition: parameters}
    Denote by $\mathscr{P}_{(a,b)}$ the Frechét space of pairs $(f,g)$ where $f : M \to \mathbb{R}$ is a smooth function on $M$ and $g$ is a Riemannian metric on $M$, satisfying the following:
    \begin{enumerate}
        \item $\overline{f} \coloneq \tau f$ extends to a smooth function on $\overline{M}$;
        \item $g =  \tau^{-1}\cdot \leftindex^b {g}$ is the cone metric associated with a logarithmic metric $\leftindex^b {g}$ on $\overline{M}$;
        \item the compactification $(\overline M, \overline f)$ of $(M,f)$ has no critical points at infinity; 
        \item $a$ and $b$ are either infinite or regular values of $f$.
    \end{enumerate}
    Denote by $\mathscr{P}_{(a,b)}^\reg \subset \mathscr{P}_{(a,b)}$ the \textit{regular locus} defined as the subspace consisting of pairs $(f,g)$ which, in addition to the four points above, satisfy:
    \begin{enumerate}
    \item[5.] $(f,g)$ satisfies the Morse--Smale condition in the action window $\{ a < f < b \}$.
    \end{enumerate}
\end{definition}

The following is a standard application of the Sard--Smale theorem.

\begin{proposition}
    $\mathscr{P}_{(a,b)}^\reg$ is dense in $\mathscr{P}_{(a,b)}$.
\end{proposition}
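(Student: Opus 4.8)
The argument is the standard Sard--Smale transversality scheme; the only point requiring care is that the perturbations used to achieve genericity must stay inside the constrained class $\mathscr{P}_{(a,b)}$. The key structural remark is that, because a pair $(f,g)\in\mathscr{P}_{(a,b)}$ has no critical points at infinity, \autoref{proposition:compactness} and its corollary guarantee that $\Crit(f)$ is a \emph{compact} subset of the interior $M$; since moreover $a,b$ are regular values, $\Crit(f)\cap f^{-1}([a,b])$ is a compact subset of $M\cap f^{-1}((a,b))$. Together with the Palais--Smale property of \autoref{proposition:PS}, this confines all critical points with value in $(a,b)$, and all negative gradient trajectories running between them, to a fixed compact subset $K$ of $M\setminus\partial\overline M$. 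Consequently it suffices to perturb $(f,g)$ by terms supported in a compact subset of the interior: such perturbations leave $\overline f$ and the logarithmic metric $\leftindex^{b}{g}$ unchanged near $\partial\overline M$, hence automatically preserve conditions (1)--(3) of \autoref{definition: parameters}, and for sufficiently small perturbations they preserve condition (4) as well, since no critical value is pushed onto $a$ or $b$.

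First I would establish density of the subset where $f$ is Morse in the window. Fix $(f_0,g_0)\in\mathscr{P}_{(a,b)}$ and an open set $U$ with $\overline U$ compact, $\overline U\subset M\cap f_0^{-1}((a,b))$, and $\Crit(f_0)\cap f_0^{-1}((a,b))\subset U$. Take a Banach space $\mathcal{B}$ of sufficiently differentiable functions supported in $\overline U$, using Floer's device of weighting higher derivatives so that smooth perturbations are dense and one returns to the $C^\infty$ category at the end. The evaluation map $(x,h)\mapsto d(f_0+h)_x$ from $U\times\mathcal{B}$ into $T^\ast M$ is transverse to the zero section --- a perturbation equal in a chart near a point to a cut-off linear function realises an arbitrary cotangent vector there --- so the universal critical locus is a Banach manifold on which the projection to $\mathcal{B}$ is Fredholm of index $0$, its regular values being exactly those $h$ for which $f_0+h$ is Morse on $U$. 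By Sard--Smale a generic small $h$ works; setting $f=f_0+h$ we have $f=f_0$ off $\overline U$, so every critical point of $f$ with value in $(a,b)$ lies in $U$ and is nondegenerate, and $(f,g_0)\in\mathscr{P}_{(a,b)}$ is close to $(f_0,g_0)$.

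Next, with $f$ fixed and Morse in the window, I would perturb the metric to achieve the Smale condition. Admissible metrics have the form $g=\tau^{-1}\cdot\leftindex^{b}{g}$; modifying $\leftindex^{b}{g}$ on a compact subset of $M$ is the same as modifying $g$ there, since $\tau$ is fixed and positive on $M$, and keeps $\leftindex^{b}{g}$ a logarithmic metric on $\overline M$. Thus the admissible metric perturbations include all metric perturbations compactly supported in the interior, which is precisely the input for the standard transversality theorem for moduli spaces of gradient trajectories (see, e.g., \cite{schwarz-book}). Applying Sard--Smale to the forgetful map from the universal moduli space of trajectories between critical points with values in $(a,b)$ to this space of metrics yields a generic, and via the same device smooth, metric $g$ arbitrarily $C^\infty$-close to $g_0$ for which $(f,g)$ satisfies the Morse--Smale condition in $\{a<f<b\}$; completeness and the absence of critical points at infinity are untouched, as the perturbation lives in a compact part of $M$. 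Then $(f,g)\in\mathscr{P}_{(a,b)}^{\reg}$ is as close as desired to $(f_0,g_0)$, proving density.

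The main obstacle is conceptual rather than computational: one must know that transversality can be forced without perturbing anything ``near infinity'', since such perturbations could in principle break smoothness of $\overline f=\tau f$ on $\overline M$ or create new critical points at infinity, taking us outside $\mathscr{P}_{(a,b)}$. This is exactly what the hypothesis of no critical points at infinity buys, through \autoref{proposition:compactness}: all relevant geometry is confined to a compact piece of the interior, where perturbing is harmless. The remaining points --- passing between the Fréchet space $\mathscr{P}_{(a,b)}$ and the Banach spaces required by Sard--Smale, and the routine verification that compactly supported interior perturbations suffice to surject the relevant linearizations --- are standard.
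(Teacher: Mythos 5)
Your proposal is correct and follows exactly the route the paper has in mind: the paper offers no proof beyond declaring the statement "a standard application of the Sard--Smale theorem," and your argument is that standard application, with the right extra observation that the absence of critical points at infinity (via \autoref{proposition:compactness} and \autoref{proposition:PS}) confines all relevant critical points and trajectories to a compact part of the interior, so that compactly supported perturbations of $f$ and of $\leftindex^{b}{g}$ stay inside $\mathscr{P}_{(a,b)}$. No gaps; the remaining details you defer to (Fredholm setup, Floer's $C^\varepsilon$-type device, Sard--Smale) are indeed standard.
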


In what follows, we explain how to associate a Morse homology group $HM_{\ast}^{(a,b)}(f,g)$ to each pair $(f,g) \in \mathscr{P}_{(a,b)}$, constructed from the critical points of a small perturbation of $f$ in the action window $\{ a < f < b \}$. Moreover, we associate to a path $(f_t , g_t )$ in $\mathscr{P}_{(a,b)}$ a continuation map $HM_{\ast}^{(a,b)}(f_0 , g_0 ) \to HM_{\ast}^{(a,b)}(f_1 , g_1 )$ which is functorial under concatenation of paths.

\subsection{Morse complex in an action window}

Let $(f,g) \in \mathscr{P}_{(a,b)}^{reg}$. Define a finitely-generated chain complex with coefficients in $\mathbb{Z}$
\[
    ( C_{\ast}^{(a,b)} (f,g) , \partial )
\]
referred to as the \textit{Morse complex of $(f,g)$ in the action window} $(a,b)$, as follows. As a graded abelian group, $C_{\ast}^{(a,b)}(f,g)$ is freely generated by the critical points of $f$ in $\{ a < f < b \}$,
\[
 C_{\ast}^{(a,b )} ( f , g) = \bigoplus_{x \in \mathrm{Crit}f \cap \{a < f < b \}} \mathbb{Z},
\]
and the grading of a critical point $x$ is defined to be the Morse index $\mathrm{ind}(f , x )$. For $x_-,x_+ \in \mathrm{Crit}f \cap \{ a < f < b \}$ such that $\mathrm{ind}(f , x_- ) = \mathrm{ind}(f , x_+ ) + 1$, the coefficient of $\partial x_-$ in $x_+$ is given by counting unparametrised negative gradient trajectories of $f$ with respect to $g$ connecting $x_-$ and $x_+$:
\[
\langle \partial x_- , x_+ \rangle = \# \Big( \mathscr{M}_{(f,g)} (x_-,x_+) / \mathbb{R} \Big) \in \mathbb{Z}.
\]
Here, $\mathscr{M}_{(f,g)} (x_-,x_+)$ is the \textit{moduli space of parametrised negative gradient trajectories}
\[
\mathscr{M}_{(f,g)} (x_-,x_+) := \Big\{ x : \mathbb{R}\to M \, | x^\prime (s)+ (\nabla_{g} f) (x(s))= 0 \, \text{  and } \lim_{s\to\pm \infty} x(s) = x_\pm \Big\},
\]
and $\#$ denotes the signed count of points in the \textit{unparametrised moduli space} $\mathscr{M}_{(f,g)}(x_- , x_+ )/\mathbb{R} $ obtained by quotienting by the translation $\mathbb{R}$--action.

\begin{remark}
It is well-known that the zero-dimensional manifold $\mathscr{M}_{(f,g)}(x_- , x_+ )/\mathbb{R}$ acquires a preferred orientation (i.e. a sign at each point) once an orientation for each of the unstable manifolds of the critical points of $f$ has been fixed (see e.g. \cite{schwarz-book}). Thus, we need to make this choice in order to define $\# \big( \mathscr{M}_{(f,g)}(x_- , x_+ )/\mathbb{R}\big)$. Alternatively, one can modify the definition of $(C_\ast^{(a,b)} , \partial)$ so that no such choices are needed, as in \cite[\S 2.2]{KM}. Similar comments apply to all counts of trajectory spaces throughout the article, and we shall make no further mention to this matter.
\end{remark}

\begin{proposition}
    \label{proposition:morse homology}
   $( C_{\ast}^{(a,b)} (f,g) , \partial )$ is a chain complex whose homology
    \[
        HM_{\ast}^{(a,b)} (f,g) = H_\ast \big( C_\ast^{(a,b)}(f,g) , \partial \big) .
    \] 
    is canonically isomorphic, as a graded abelian group, to the relative singular homology
    \[
        HM_\ast^{(a,b)}(f,g) \cong H_\ast (\{ f \leq b \} , \{f\leq a \} ; \mathbb{Z}).
    \]
\end{proposition}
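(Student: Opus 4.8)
The plan is to treat $(C_*^{(a,b)}(f,g),\partial)$ as an instance of the ordinary Morse complex, the two nonstandard features---noncompactness of $M$ and the restriction to an action window---being controlled respectively by \autoref{proposition:PS} and by the monotonicity of $f$ along negative gradient trajectories. For the well-definedness of $\partial$ and the relation $\partial\circ\partial=0$: since $(f,g)$ has no critical points at infinity, \autoref{proposition:PS} gives that $g$ is complete, that $\mathrm{Crit}(f)$ is compact---hence, by the Morse condition in the window, that $\mathrm{Crit}(f)\cap\{a<f<b\}$ is finite---and that $(f,g)$ satisfies the Palais--Smale condition. As $f$ is nonincreasing along negative gradient trajectories, any trajectory with endpoints $x_\pm\in\mathrm{Crit}(f)\cap\{a<f<b\}$ has image in the $f$-preimage of the compact interval $[f(x_+),f(x_-)]$ and energy $f(x_-)-f(x_+)$; together with completeness and Palais--Smale, this gives the usual $C^\infty_{\mathrm{loc}}$ compactness up to breaking of $\mathscr{M}_{(f,g)}(x_-,x_+)$, all breakings occurring at critical points again lying in $(a,b)$ by the same monotonicity. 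Combined with the Morse--Smale transversality built into $\mathscr{P}_{(a,b)}^{\reg}$ and the standard gluing theorem, $\mathscr{M}_{(f,g)}(x_-,x_+)/\mathbb{R}$ is then a finite set of signed points when $\mathrm{ind}(f,x_-)-\mathrm{ind}(f,x_+)=1$, so $\partial$ is defined, and a compact $1$-manifold with oriented boundary the once-broken trajectories when the index difference is $2$, so $\partial\circ\partial=0$. This all proceeds as in the compact case (e.g.\ \cite{schwarz-book}); the only extra input is the completeness and Palais--Smale condition of \autoref{proposition:PS}.

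For the isomorphism with relative singular homology I would run the classical Morse-theoretic argument. Since $a,b$ are regular values (or $\pm\infty$) and there are only finitely many, nondegenerate, critical points between them, the negative gradient flow exhibits $\{f\le b\}$ as built from $\{f\le a\}$, up to homotopy rel $\{f\le a\}$, by attaching one handle of index $\mathrm{ind}(f,x)$ for each $x\in\mathrm{Crit}(f)\cap\{a<f<b\}$ (when $a$ or $b$ is infinite the relevant sublevel set is the homotopy-stable one, pushed below all critical values, resp.\ exhausting $M$). The one point where noncompactness must be addressed is the passage of the flow between consecutive critical levels: on $\{a'\le f\le b'\}$ with $a<a'<b'<b$ and no critical value in $[a',b']$, the Palais--Smale condition forces $|\nabla f|\ge\delta>0$ there---a sequence along which it degenerated would converge (in $M$) to a critical point at an intermediate level---so the flow has length at most $(b'-a')/\delta$ between these levels and, by completeness, cannot escape to infinity; hence $\{f\le b'\}$ deformation retracts onto $\{f\le a'\}$ and the usual handle attachments across critical levels go through verbatim. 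It follows that $H_*(\{f\le b\},\{f\le a\};\mathbb{Z})$ is computed by a complex with one free generator in degree $\mathrm{ind}(f,x)$ for each such $x$, i.e.\ the same underlying graded group as $C_*^{(a,b)}(f,g)$; only the differentials remain to be matched.

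The identification of the cellular boundary map of this handle decomposition with the Morse differential---$\langle\partial x_-,x_+\rangle$ being the degree with which the attaching sphere of the handle of $x_-$ sweeps the belt sphere of the handle of $x_+$, which by Morse--Smale transversality equals the signed count of connecting trajectories---is the single genuinely nonformal step; I would carry it out as in \cite{schwarz-book}, using \cite{milnor-hcobordism} for the handle-theoretic input and \cite{KM} for the sign-free reformulation. The resulting isomorphism is canonical, the relative CW pair and its cellular complex being natural. To streamline the bookkeeping one may instead treat first the half-infinite windows, note that monotonicity of $f$ makes $C_*^{(-\infty,a)}(f,g)$ a subcomplex of $C_*^{(-\infty,b)}(f,g)$ with quotient $C_*^{(a,b)}(f,g)$, and compare the long exact sequence of this short exact sequence of complexes with that of the pair $(\{f\le b\},\{f\le a\})$ via the five lemma. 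In either route the main obstacle is this cellular-equals-Morse step, together with verifying that the Palais--Smale condition supplied by \autoref{proposition:PS} is robust enough to run the handle-attachment deformations on the noncompact manifold $M$; the trajectory-space compactness underlying $\partial\circ\partial=0$ is comparatively routine given the same input.
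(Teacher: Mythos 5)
Your proposal is correct and takes essentially the same approach as the paper: the paper likewise deduces $C^\infty_{\mathrm{loc}}$-compactness of trajectory spaces from the Palais--Smale and completeness statements of \autoref{proposition:PS}, invokes the standard broken-trajectory compactification and gluing to see that $\partial$ is well defined and $\partial^2=0$ (with breakings confined to the window by monotonicity of $f$), and defers the identification with $H_\ast(\{f\le b\},\{f\le a\};\mathbb{Z})$ to the standard references \cite{abbondandolo,schwarz-book}. The sublevel-set deformation and handle-attachment argument you sketch, with Palais--Smale supplying the gradient lower bound between critical levels, is exactly the standard argument contained in those references, so you are merely spelling out the step the paper cites.
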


We will refer to $ HM_\ast^{(a,b)}(f,g)$ as the \textit{Morse homology in the action window $(a,b)$}.  

\begin{proof}
Given the results of the previous section, this is a standard result in Morse theory on non-compact manifolds (see for example, \cite[Theorem 2.8, Proposition 2.15]{abbondandolo} or \cite{schwarz-book}). The following outlines the argument. First, using the Palais--Smale condition of $(f,g)$ (Proposition \ref{proposition:PS}) one can show as in \cite[Lemmas 2.38 and 2.39]{schwarz-book} that any sequence of trajectories $( x_n )$ in $\mathscr{M}_{(f,g)}(x_- , x_+ )$ converges, after passing to a subsequence, in $C^{k}_{loc}$ for all $k$ to another gradient trajectory (converging to different limits, possibly). Using this result, one next defines the usual compactification of $\mathscr{M}_{(f,g)}(x_- , x_+ )/\mathbb{R}$ by broken trajectories, denoted $\overline{\mathscr{M}_{(f,g)}(x_- , x_+ )/\mathbb{R}}$. Standard gluing results show that this compactification is a smooth manifold with corners, whose codimension $0$ stratum (i.e. its interior) is $\mathscr{M}_{(f,g)}(x_- , x_+ )/\mathbb{R}$ and whose codimension $c$ stratum consists of the $c$ times broken trajectories from $x_- $ to $x_+$. Thus, when $\mathrm{ind}(f,x_- ) = \mathrm{ind}(f,x_+ )+1$, then $\mathscr{M}_{(f,g)}(x_- , x_+ )/\mathbb{R}$ is compact and zero-dimensional and thus finite---showing $\partial$ is well-defined. When $\mathrm{ind}(f,x_-) = \mathrm{ind}(f,x_+ )+2$ then $\overline{\mathscr{M}_{(f,g)}(x_- , x_+ )/\mathbb{R}}$ is a compact $1$-manifold with boundary (whose boundary points are the once-broken trajectories), and enumerating boundary points in these moduli spaces yields the identity $\partial^2 = 0$.
\end{proof}

\subsection{Continuation maps in an action window}

Let $(f_t , g_t ) \in \mathscr{P}_{(a,b)}$ be a path with endpoints $(f_0 , g_0 )$ and $(f_1 , g_1 )$ in $\mathscr{P}_{(a,b)}^{reg}$. To such a path we associate a continuation isomorphism in Morse homology in the action window $(a,b)$. The existence of this map and the fact that it is an isomorphism is not guaranteed by general theory, and we shall crucially use here the `slow' time-dependent gradient equation considered by Floer in \cite{floer-spheres}.

We first make some preliminary observations. Fix a smooth non-decreasing function $\gamma : \R \to [0,1]$ such that $\gamma (s) = 0$ for $s\leq -1$ and $\gamma (s) = 1$ for $s \geq 1$. Fix also a \emph{slowing parameter} $\delta > 0$, which later we will assume to be small. Consider a smooth path $x : \mathbb{R} \to M$ such that $x(s)$ converges exponentially to critical points of $f_0$ and $f_1$ as $s \to -\infty$ and $s \to + \infty$. The \textit{analytic energy} $\mathcal{E}^{an}(x) \in \mathbb{R}$ and \textit{topological energy} $\mathcal{E}^{top}(x) \in \mathbb{R}$ of the path $x$ are defined by
\begin{align*}
    \mathcal{E}^{an}(x) & := \int_{-\infty}^\infty | x^\prime (s)|^2 + |(\nabla_{g_{\gamma(\delta s)}} f_{\gamma(\delta s)} )(x(s))|^2 - 2 (\partial_s f_{\gamma(\delta s)} )(x(s)) \, ds \\
    \mathcal{E}^{top}(x) & := 2\cdot \big( f_0(x(-\infty )) - f_1(x(+\infty))\big).
\end{align*}
We have an energy identity
\begin{align*}
    \mathcal{E}^{an} (x) = \mathcal{E}^{top}(x) + \int_{-\infty}^{\infty} | x^\prime (s) + (\nabla f_{\gamma (\delta s)} )(x(s))|^2  \, ds
\end{align*}
Thus, the equality $\mathcal{E}^{an}(x) = \mathcal{E}^{top} (x)$ is attained if and only if $x$ solves the $\delta$--slow time-dependent gradient equation
\begin{align}
    x^\prime (s) + (\nabla_{g_{\gamma (\delta s )}} f_{\gamma(\delta s )} )(x(s)) = 0 \quad\text{for all } s \in \R.\label{timedependentgradienteq}
\end{align}
In this case,  we denote both $\mathcal{E}^{an}(x)$ and $\mathcal{E}^{top}(x)$ simply by $\mathcal{E}(x)$.\\

The following lemma allows us to control the behavior of solutions to (\ref{timedependentgradienteq}):

\begin{lemma}\label{lemma:nocrossing}
    Let $(f_t , g_t ) \in \mathscr{P}_{(a,b)}$ be a path with endpoints $(f_0 , g_0 )$ and $(f_1 , g_1 )$ in $\mathscr{P}_{(a,b)}^{reg}$. Fix $\sigma >0$ such that $f_t$ has no critical values in $[a-\sigma ,a]$ and $[b,b+\sigma]$ for all $t \in [0,1]$. There exists $\delta_0 >0 $ with the following property. If $0 < \delta \leq \delta_0$ and $x : \mathbb{R}\to M$ is a solution to \eqref{timedependentgradienteq} converging to critical points of $f_0$ and $f_1$ as $s \to -\infty$ and $s\to +\infty$, then:
    \begin{enumerate}
        \item If $f_0 (x(-\infty) ) < b$ then $f_{\gamma(\delta s)}(x(s)) \leq b + \sigma$ for all $s$ and, in particular, $f_1 (x(+\infty)) < b$.
        \item If $f_1 (x(+\infty)) >a$ then $f_{\gamma(\delta s)} (x(s)) \geq a-\sigma$ for all $s$ and, in particular, $f_0 (x(-\infty)) > a$.
    \end{enumerate}
\end{lemma}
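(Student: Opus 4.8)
The plan is to reduce both parts to the single pointwise implication: \emph{for $\delta$ small enough, $h'(s) < 0$ whenever $h(s) \in [a-\sigma,a] \cup [b,b+\sigma]$}, where $h(s) := f_{\gamma(\delta s)}(x(s))$ is the function appearing in parts (1) and (2). Granting this, part (1) follows from the intermediate value theorem. If $f_0(x(-\infty)) = \lim_{s\to-\infty} h(s) < b$ but $h(s_\ast) > b+\sigma$ for some $s_\ast$, then (as $h$ is continuous with $\lim_{s\to-\infty}h(s)<b$) there are $s_1 < s_2$ with $h(s_1) = b$, $h(s_2) = b+\sigma$ and $h([s_1,s_2]) \subseteq [b,b+\sigma]$; but then $h' < 0$ on $[s_1,s_2]$, so $h(s_2) < h(s_1)$ — a contradiction. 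Hence $h \le b+\sigma$ everywhere, and since $f_1(x(+\infty)) = \lim_{s\to+\infty} h(s)$ is a critical value of $f_1$ — which by hypothesis has none in $[b,b+\sigma]$ — it must be $< b$. Part (2) is proved identically, using $h'<0$ on $[a-\sigma,a]$: if $\lim_{s\to+\infty}h(s)>a$ then $h$ can never fall below $a-\sigma$, so $\lim_{s\to-\infty}h(s)\ge a-\sigma$, which being a critical value of $f_0$ forces it to exceed $a$.

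For the pointwise implication, differentiate $h$ and substitute $x'(s)$ from \eqref{timedependentgradienteq}:
\[
h'(s) = (\partial_s f_{\gamma(\delta s)})(x(s)) - |(\nabla_{g_{\gamma(\delta s)}} f_{\gamma(\delta s)})(x(s))|^2_{g_{\gamma(\delta s)}},
\]
where $(\partial_s f_{\gamma(\delta s)})(x(s))$ is as in the energy functionals, namely $\delta\gamma'(\delta s)$ times $\partial_t f_t$ evaluated at $t = \gamma(\delta s)$ and $x(s)$. Now bring in the framework of \autoref{section:compactifications}. Since $\overline{f}_t = \tau f_t$ with $\tau$ independent of the family parameter, on $M$ we have $\partial_s f_{\gamma(\delta s)} = \tau^{-1}\,\partial_s\overline{f}_{\gamma(\delta s)}$; by \eqref{tau df} the field $V_t := \nabla_{g_t} f_t$ — the $\leftindex^{b}{g_t}$-dual of the logarithmic $1$-form $\tau\,df_t$ — extends to a smooth section of $\leftindex^{b}{T}\overline{M}$ that depends smoothly on $t$; and by \eqref{norm2}, $|V_t|^2_{g_t} = \tau^{-1}|V_t|^2_{\leftindex^{b}{g_t}}$. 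Factoring out $\tau^{-1} \geq 1$,
\[
h'(s) = \tau(x(s))^{-1}\Big[\, (\partial_s\overline{f}_{\gamma(\delta s)})(x(s)) - |V_{\gamma(\delta s)}(x(s))|^2_{\leftindex^{b}{g_{\gamma(\delta s)}}} \,\Big].
\]
The decisive point is that both competing terms are now of order $\tau^{-1}$, so the sign of $h'(s)$ is governed by the bracket: its first term is bounded in absolute value by $\delta\,\|\gamma'\|_\infty\,\max_{[0,1]\times\overline{M}}|\partial_t\overline{f}_t|$ — finite, since $(t,q)\mapsto\overline{f}_t(q)$ is smooth on a compact set — while its second term is continuous on the compact space $[0,1]\times\overline{M}$.

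The crux is a positive lower bound for $|V_t|_{\leftindex^{b}{g_t}}$ on the set swept out while $h$ stays in the critical band. Let $A_+$ (resp.\ $A_-$) consist of the pairs $(t,q)\in[0,1]\times M$ with $f_t(q)\in[b,b+\sigma]$ (resp.\ $f_t(q)\in[a-\sigma,a]$), and let $\overline{A_\pm}$ be its closure in $[0,1]\times\overline{M}$. I claim $|V_t|_{\leftindex^{b}{g_t}}\ge m_\pm$ on $\overline{A_\pm}$ for some $m_\pm>0$ (if $A_\pm=\emptyset$ the claim and the corresponding part of the lemma are vacuous). As $\overline{A_\pm}$ is compact and $|V_t|_{\leftindex^{b}{g_t}}$ is continuous, it suffices to show it does not vanish on $\overline{A_\pm}$. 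At a point $(t,q)$ with $q\in M$: $f_t(q)$ lies in a band free of critical values of $f_t$, so $df_t(q)\neq 0$, hence $V_t(q)=\nabla_{g_t}f_t(q)\neq 0$. At a boundary point $(t,p)$, $p\in\partial\overline{M}$: take $(t_n,q_n)\in A_\pm$ with $(t_n,q_n)\to(t,p)$; boundedness of $f_{t_n}(q_n)$ together with $\tau(q_n)\to\tau(p)=0$ gives $\overline{f}_{t_n}(q_n)=\tau(q_n)f_{t_n}(q_n)\to 0$, so $\overline{f}_t(p)=0$ by continuity of $(t,q)\mapsto\overline{f}_t(q)$. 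Since $(f_t,g_t)\in\mathscr{P}_{(a,b)}$ has no critical points at infinity, $p$ is not a stratified critical point of $\overline{f}_t|_{\partial\overline{M}}$, so \eqref{restrict_S} gives $(\tau\,df_t)|_S(p)=d(\overline{f}_t|_S)(p)\neq 0$ ($S$ the stratum through $p$), hence $V_t(p)\neq 0$; this is exactly the computation in the proof of \autoref{proposition:compactness}. The claim follows.

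Finally, put $C := \max_{[0,1]\times\overline{M}}|\partial_t\overline{f}_t|$ and $C_\gamma := \|\gamma'\|_\infty$, and set $\delta_0 := \min(m_+^2,m_-^2)/(2 C_\gamma C)$ (any positive value will do if $C=0$). For $0<\delta\le\delta_0$ and any $s$ with $h(s)$ in the relevant band, the pair $(\gamma(\delta s),x(s))$ lies in $\overline{A_\pm}$, so the bracket above is at most $\delta C_\gamma C - m_\pm^2 \le -\tfrac12 m_\pm^2 < 0$; since $\tau(x(s))^{-1}\ge 1$, this yields $h'(s)<0$, establishing the pointwise implication and hence the lemma. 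I expect the delicate step to be precisely this passage to the logarithmic picture: $\partial_t f_t$ genuinely blows up along $\partial\overline{M}$, so $\partial_s f_{\gamma(\delta s)}$ cannot be treated as a uniformly bounded perturbation of the gradient flow; the argument works only because $|\nabla_{g_t}f_t|^2_{g_t}$ blows up at the same rate $\tau^{-1}$, so that after factoring it out the whole matter reduces to a lower bound on $|V_t|_{\leftindex^{b}{g_t}}$ over the compact $[0,1]\times\overline{M}$ — which is exactly what the absence of critical points at infinity supplies.
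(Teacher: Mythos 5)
Your proof is correct and follows essentially the same route as the paper: the same two estimates (a uniform lower bound $|\nabla_{g_t}f_t|^2\gtrsim \tau^{-1}$ on the critical bands, obtained from compactness of $[0,1]\times\overline M$ and the absence of critical points at infinity via \eqref{restrict_S}, against the bound $|\partial_t f_t|\lesssim \tau^{-1}$) drive the choice of $\delta_0$. The only difference is packaging: you use the pointwise monotonicity $h'(s)<0$ of $h(s)=f_{\gamma(\delta s)}(x(s))$ in the band plus the intermediate value theorem, while the paper integrates the same inequality into the energy identity over a crossing interval and derives a contradiction.
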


\begin{proof}
    For every $t$, the function $f_t$ has no critical values in $[b,b+\sigma]$ and no critical points at infinity. Therefore, by \eqref{norm2}, on $\{ b +\sigma/2 \leq f_t \leq b + \sigma\} \subset M $ we have
    \[
    |\nabla_{g_t} f_t (x)|^2 \geq \frac{c}{\tau}
    \]
    for a constant $c$ which is independent of $t$. On the other hand, since $f_t = \overline{f}_t /\tau$ where $\overline{f}_t $ is a path of smooth functions on the compact manifold with corners $\overline{M}$, we have on $M$
    \[
    |\partial_t f_t (x) | \leq \frac{c^\prime}{\tau} .
    \]
    These two estimates combined show that for sufficiently small $\delta$ the following holds. For all $s\in \mathbb{R}$ and $x\in \{ b +\sigma/2 \leq f_t \leq b + \sigma\}  $,
    \begin{align}
    |\nabla_{g_{\gamma(\delta s)}} f_{\gamma(\delta s)} (x) |^2 - 2 ( \partial_s f_{\gamma(\delta s)} )(x) \geq 0 .\label{inequality_strip}
    \end{align}
    
    Let $x : \mathbb{R} \to M$ be a path as in the statement above, and suppose for a contradiction that $f_0 (x(-\infty)) < b$ and $f_{\gamma(\delta s)} (x(s)) > b + \sigma$ at some time. Then there exist times $s_1 < s_2$ such that
    \begin{itemize}
        \item $f_{\gamma(\delta s_1 )} (x(s_1 )) = b + \sigma/2$, 
        \item $f_{\gamma(\delta s_2 )} (x(s_2 )) = b + \sigma$, 
        \item $f_{\gamma(\delta s )} (x(s )) \in [b + \sigma/2 , b + \sigma ]$ for all $s \in [s_1 , s_2 ]$.
    \end{itemize}
    Therefore, by the energy identity,
    \begin{align*}
    0  &= \int_{s_1}^{s_2} | x^\prime (s) + (\nabla f_{\gamma(\delta s)} (x(s))|^2  \, ds \\
     &= \int_{s_1}^{s_2} \Big( | x^\prime (s)|^2  + |(\nabla f_{\gamma(\delta s)} (x(s))|^2  - 2 (\partial_s f_{\gamma(\delta s)}  )(x(s))\Big) \, ds \\
       & \quad - 2\cdot (f_{\gamma(\delta s_1)}(x(s_1)) - f_{\gamma(\delta s_2 )}(x(s_2) ) \\
       &\geq 0 + \sigma > 0,
    \end{align*}
    where we have used \eqref{inequality_strip}. This contradiction proves the first assertion. The second assertion is proved in the same way. 
\end{proof}

In what follows, fix $\delta$ with $0< \delta\leq \delta_0$ as \autoref{lemma:nocrossing}. For critical points $x_- \in \mathrm{Crit}f_0$ and $x_+ \in \mathrm{Crit}f_1$ with $f_0 (x_- ) , f_1 (x_+ ) \in (a,b)$, consider the moduli space of trajectories 
\[
\mathscr{M}_{\{(f_t , g_t ) \}}(x_- , x_+ ) = \Big\{ x : \mathbb{R} \to M \, | \, x(s) \text{ solves (\ref{timedependentgradienteq}) and } \lim_{s \to \pm \infty}x(s) = x_{\pm} \Big\} .
\]
Assume that the path $(f_t , g_t )$ is generic, so that the moduli space $\mathscr{M}_{\{(f_t , g_t)\}}(x_- , x_+ )$ is a smooth manifold of dimension
\[
    \dim\mathscr{M}_{\{(f_t , g_t)\}}(x_- , x_+ ) = \mathrm{ind}(f_0 , x_- ) - \mathrm{ind}(f_1 , x_+ ).
\]



Using the Palais--Smale condition of $(f_0 , g_0 )$ and $(f_1 , g_1 )$, one shows using standard arguments (as outlined in the proof of Proposition \ref{proposition:morse homology}) that the moduli space $\mathscr{M}_{\{ (f_t , g_t )\}}(x_- , x_+ )$ has a natural compactification by broken trajectories to a smooth manifold with corners $\overline{\mathscr{M}}_{\{ (f_t , g_t )\}}(x_- , x_+ )$: 
\begin{align*}
\overline{\mathscr{M}}_{\{ (f_t , g_t )\}}(x_- , x_+ ) &= 
\Big( \bigcup_{y_ \in \mathrm{Crit}f_0} \overline{\mathscr{M}_{\{ (f_0 , g_0 )\}}(x_- , y_- )/\mathbb{R}} \times \mathscr{M}_{(f_t , g_t )}(y_- , x_+ )\Big)\\
& \cup \Big( \bigcup_{y_+ \in \mathrm{Crit}f_1}   \mathscr{M}_{(f_t , g_t )}(x_- , y_+ ) \times \overline{\mathscr{M}_{\{ (f_1 , g_1 )\}}(y_+ , x_+ )/\mathbb{R}} 
\Big).
\end{align*}
In particular, when its dimension is $0$, then $\mathscr{M}_{\{(f_t , g_t)\}}(x_- , x_+ )$ is a finite set of points; when its dimension is $1$, then $\overline{\mathscr{M}}_{\{(f_t , g_t)\}}(x_- , x_+ )$ is a compact $1$-dimensional manifold with boundary, whose boundary is given by the once-broken trajectories from $x_-$ to $x_+$.
We thus obtain a degree-preserving map of graded abelian groups \begin{gather*}
    c : C_{\ast}^{(a,b)}(f_0 , g_0) \to C_\ast^{(a,b)}(f_1 , g_1) \\
    c(x_- ) = \sum_{x_+} \# \mathscr{M}_{\{ (f_t , g_t )\}}(x_-,x_+) \cdot x_+ ,
\end{gather*}
where the sum is taken over all critical points $x_+$ of $f_1$ in the action window $(a,b)$ and satisfying $\mathrm{ind}(f_0 , x_- ) - \mathrm{ind}(f_1 , x_+ ) = 1$, and $\#$ denotes the signed count of points with respect to the natural orientation on the moduli space.

\begin{proposition}
    The map $c$ is a chain map. 
\end{proposition}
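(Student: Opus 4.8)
The plan is to run the usual cobordism argument: identify $\partial_1\circ c - c\circ\partial_0$ with the signed count of the boundary points of the compactified one-dimensional continuation moduli spaces, while using \autoref{lemma:nocrossing} to ensure that only critical points inside the action window ever enter the picture.

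First I would fix $x_-\in\mathrm{Crit}f_0$ and $x_+\in\mathrm{Crit}f_1$ with $f_0(x_-),f_1(x_+)\in(a,b)$ and $\mathrm{ind}(f_0,x_-)-\mathrm{ind}(f_1,x_+)=1$, so that $\mathscr{M}_{\{(f_t,g_t)\}}(x_-,x_+)$ is one-dimensional. By the compactness and gluing results recorded just before the proposition, $\overline{\mathscr{M}}_{\{(f_t,g_t)\}}(x_-,x_+)$ is a compact one-manifold with boundary whose boundary points are the once-broken configurations: either a negative gradient trajectory of $(f_0,g_0)$ from $x_-$ to some $y_-\in\mathrm{Crit}f_0$ with $\mathrm{ind}(f_0,x_-)-\mathrm{ind}(f_0,y_-)=1$, followed by a $\delta$--slow continuation trajectory from $y_-$ to $x_+$; or a continuation trajectory from $x_-$ to some $y_+\in\mathrm{Crit}f_1$ with $\mathrm{ind}(f_1,y_+)=\mathrm{ind}(f_0,x_-)$, followed by a negative gradient trajectory of $(f_1,g_1)$ from $y_+$ to $x_+$. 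Counting boundary points with their induced signs gives $0=\#\partial\overline{\mathscr{M}}_{\{(f_t,g_t)\}}(x_-,x_+)$; the two families contribute, with opposite signs, the $x_+$--coefficients of $c(\partial_0 x_-)$ and of $\partial_1(c(x_-))$, so that summing over all $x_+$ in the window gives $\partial_1\circ c = c\circ\partial_0$.

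The step that really needs care — and the main obstacle — is to verify that every intermediate critical point $y_\pm$ appearing above actually lies in $(a,b)$, since otherwise it is not a generator of the relevant complex and the boundary count could not be identified with the chain-level compositions. For a broken trajectory of the first type, monotonicity of the negative gradient flow of $f_0$ gives $f_0(y_-)\le f_0(x_-)<b$, while applying part (2) of \autoref{lemma:nocrossing} to the continuation trajectory from $y_-$ to $x_+$ — whose right limit satisfies $f_1(x_+)>a$ — gives $f_0(y_-)>a$. Thus $f_0(y_-)\in(a,b)$, and since $f_0$ decreases monotonically along the gradient trajectory from $x_-$ to $y_-$ with both endpoints in $(a,b)$, that trajectory stays in $\{a<f_0<b\}$, where the Morse--Smale condition of $(f_0,g_0)$ (point 5 of \autoref{definition: parameters}) holds and transversality is available. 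The symmetric estimate, using monotonicity of the $f_1$--gradient flow and part (1) of \autoref{lemma:nocrossing}, handles $y_+$. This is precisely the control the slow gradient equation was introduced to provide; without it, continuation trajectories could cross the walls $\{f=a\}$ and $\{f=b\}$ and the chain-map identity would fail.

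The remaining ingredients are routine. Compactness of the one-dimensional continuation moduli spaces and the absence of breaking along $\partial\overline M$ follow from the Palais--Smale property of the regular endpoints $(f_0,g_0)$ and $(f_1,g_1)$ (\autoref{proposition:PS}), exactly as in the proof of \autoref{proposition:morse homology}; the gluing theorems giving the manifold-with-corners structure of $\overline{\mathscr{M}}_{\{(f_t,g_t)\}}(x_-,x_+)$ are standard; and the orientation bookkeeping needed to match the induced boundary orientation with the signs defining $c$, $\partial_0$ and $\partial_1$ is the usual one, once orientations of the unstable manifolds of the critical points have been fixed as discussed earlier.
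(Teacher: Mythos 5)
Your proposal is correct and follows essentially the same route as the paper: identify $\partial_1 c - c\,\partial_0$ with the signed boundary count of the compactified one-dimensional continuation moduli spaces, and rule out breaking at critical points outside the window by combining monotonicity (equivalently, positivity of energy) along the pure gradient pieces with the two parts of \autoref{lemma:nocrossing} applied to the slow continuation pieces. The paper phrases the monotonicity step via the energy identity and only spells out the case of $y_+$, but the content is the same as your argument.
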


\begin{proof}
    Let $x_- , x_+$ be critical points of $f_0$ and $f_1$ respectively, with $\mathrm{ind}(f_0 , x_- )-  \mathrm{ind}(f_1 , x_+ ) = 1$. The $1$--dimensional moduli space $\mathscr{M}_{\{(f_t , g_t )\} }(x_- , x_+ )$ is compactified to $\overline{\mathscr{M}}_{\{(f_t , g_t)\}}(x_- , x_+ )$ by adding once-broken trajectories of the following two kinds:
    \begin{itemize}
    \item $(x_0 , x_{01})$ where $x_0$ is a flowline of $-\nabla_{g_0} f_0$ from $x_-$ to a critical point $y_- \in \mathrm{Crit}f_0$, and $x_{01} \in \mathscr{M}_{\{(f_t , g_t)\}}(y_- , x_+ )$.
    \item $(x_{01} , x_{1})$ where $x_{01} \in \mathscr{M}_{\{(f_t , g_t )\}}(x_- , y_+)$ for some critical point $y_+ \in \mathrm{Crit}f_1$, and $x_1$ is a flowline of $-\nabla_{g_1} f_1$ from $y_+$ to $x_+$.
    \end{itemize}
    From this, the standard argument involving enumerating boundary points yields the chain map identity $\partial_1 c = c \partial_0$, \textit{provided} that the critical points $y_- \in \mathrm{Crit}f_{0}$ and $y_+ \in \mathrm{Crit}f_1$ have critical values in the interval $(a , b)$. To establish the latter, observe that $f_0 (x_- ) < b$ and \autoref{lemma:nocrossing} imply that $f_1 (y_+ ) <b  $.
    On the other hand, if $f_1 (y_+ ) \leq a$, then
    \[
        0 < \mathcal{E}(x_1 ) = 2 (f_1 (y_+ ) - f_1 (x_+ )) < 2 (a -a ) = 0,
    \] 
    which is impossible. Thus, $f_1 (y_+ ) \in (a,b)$. The proof for $y_-$ is analogous.
\end{proof}

Standard homotopy arguments prove the following.

\begin{proposition}
\label{proposition: continuation}
~  
\begin{enumerate}
    \item Up to chain homotopy, the chain map $c$  depends only on the homotopy class of the path $(f_t ,g_t )$ in $\mathscr{P}_{(a,b)}$ relative to the endpoints.
    \item The continuation maps behave functorially under concatenation of paths, that is: given $(f_0 , g_0 ), (f_1 , g_1 ) , (f_2 , g_2 ) \in \mathscr{P}_{(a,b)}^{reg}$ and paths $(f^{01}_t , g^{01}_t )$ and $(f_{t}^{12} , g_{t}^{12} )$ in $\mathscr{P}_{(a,b)}$ from $(f_0 , g_0)$ to $(f_1 , g_1 )$, and from $(f_1 , g_1 )$ to $(f_2 , g_2 )$ respectively, then the continuation chain map $c_{02}$ of the concatenation is chain homotopic to $c_{12} \circ c_{01}$. 
    \item The continuation map $c$ induces an isomorphism on Morse homology in the action window $(a,b)$:
     \[
        c \coloneq HM_{\ast}^{(a,b)} (f_0 , g_0 ) \xrightarrow{\cong} HM_\ast^{(a,b)}(f_1 ,g_1).
    \]
\end{enumerate}
\end{proposition}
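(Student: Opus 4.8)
The plan is to follow the standard scheme for proving that continuation maps are chain-homotopy-invariant and invertible, but to carry all arguments inside fixed action windows, using Lemma \ref{lemma:nocrossing} at each stage to ensure that no trajectory escapes the window $(a,b)$ in the limit. For part (1), given two paths $(f_t^0, g_t^0)$ and $(f_t^1, g_t^1)$ in $\mathscr{P}_{(a,b)}$ with the same endpoints and a homotopy between them (rel endpoints), I would introduce a two-parameter family of $\delta$-slow time-dependent gradient equations interpolating the two, and study the corresponding parametrised moduli spaces $\mathscr{M}(x_-, x_+)$ of dimension $\mathrm{ind}(f_0, x_-) - \mathrm{ind}(f_1, x_+) + 1$. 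After shrinking $\delta$ if necessary (uniformly over the compact homotopy parameter), Lemma \ref{lemma:nocrossing} applies along the whole homotopy and keeps all broken configurations inside the window, so counting the boundary of the one-dimensional such moduli spaces yields the chain homotopy $K$ with $\partial_1 K + K \partial_0 = c^1 - c^0$. The energy identity again rules out intermediate critical values landing in $\{f_1 \le a\}$, exactly as in the proof that $c$ is a chain map. Part (2) is a special case of the same technique: the concatenation of two paths is homotopic rel endpoints to a path obtained by reparametrisation, and a standard stretching/gluing argument for the slow gradient flow produces a chain homotopy between $c_{02}$ and $c_{12} \circ c_{01}$, once more confining trajectories to the window via Lemma \ref{lemma:nocrossing} applied to each of the two sub-paths with a common small $\delta$.

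For part (3), I would argue that $c$ is an isomorphism by exhibiting a homotopy inverse. Let $(f_t, g_t)$ be the given path from $(f_0,g_0)$ to $(f_1,g_1)$, and let $(\bar f_t, \bar g_t) = (f_{1-t}, g_{1-t})$ be the reversed path, with associated continuation chain map $\bar c \colon C_\ast^{(a,b)}(f_1, g_1) \to C_\ast^{(a,b)}(f_0, g_0)$. The concatenation of $(f_t, g_t)$ with $(\bar f_t, \bar g_t)$ is a loop at $(f_0, g_0)$ which is homotopic rel endpoints to the constant path; by parts (1) and (2), $\bar c \circ c$ is chain homotopic to the continuation map of the constant path. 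That latter map is chain homotopic to the identity by the standard argument: for a sufficiently short constant path the only $\delta$-slow trajectories of dimension zero are the stationary ones, so the continuation chain map is the identity on the nose. Symmetrically $c \circ \bar c$ is chain homotopic to the identity, so $c$ is a chain homotopy equivalence and hence an isomorphism on homology.

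The main obstacle — and the only place where the construction genuinely differs from the compact case — is ensuring that the no-escape conclusion of Lemma \ref{lemma:nocrossing} holds \emph{uniformly} across all the auxiliary parameter families introduced above: the homotopy parameter in part (1), the stretching parameter in part (2), and the shrinking constant path in part (3). Concretely, one must choose a single slowing parameter $\delta$ that works simultaneously for every path occurring in a given compact family of paths. This follows because the estimates in the proof of Lemma \ref{lemma:nocrossing} — namely $|\nabla_{g_t} f_t|^2 \ge c/\tau$ on the relevant collar shells and $|\partial_t f_t| \le c'/\tau$ — depend only on $C^1$-bounds for the families $\bar f_t$, $\bar g_t$ on the compact manifold with corners $\overline M$, which are uniform over any compact family of paths in $\mathscr{P}_{(a,b)}$. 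With this uniform $\delta$ fixed, every compactness and gluing statement used above is the standard one for Palais--Smale pairs, guaranteed by Proposition \ref{proposition:PS}, and the bookkeeping of boundary points of one-dimensional moduli spaces goes through verbatim.
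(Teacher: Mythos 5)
Your proposal is correct and amounts to exactly the ``standard homotopy arguments'' that the paper invokes without spelling them out: parametrised moduli spaces for homotopies rel endpoints, stretching/gluing for concatenations, and the reversed-path/constant-path argument for invertibility, all confined to the action window $(a,b)$ by applying \autoref{lemma:nocrossing} uniformly over the relevant compact families of paths (which is legitimate here precisely because the whole path, and hence its reverse and the contracting homotopy of the loop, stays in $\mathscr{P}_{(a,b)}$ --- the paper's warning about non-invertible continuation maps only concerns paths leaving the parameter space). The one imprecision is your assertion that the uniform constant in the lower bound $|\nabla_{g_t} f_t|^2 \geq c/\tau$ on the shells follows from $C^1$-bounds on $\overline f_t$ and the logarithmic metrics alone; it actually requires the compactness argument from the proof of \autoref{lemma:nocrossing} (no critical values in the shells and no critical points at infinity for every member of the compact family, combined with compactness of $\overline M$), but with that justification repaired the uniform choice of the slowing parameter $\delta$, and hence the rest of your argument, goes through.
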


Since $\mathscr{P}_{(a,b)}$ is locally contractible, it follows that the Morse homology $HM_{\ast}^{(a,b)}(f,g)$ is defined for all $(f,g) \in \mathscr{P}_{(a,b)}$ even when $(f,g) \notin \mathscr{P}_{(a,b)}^{reg}$, and that continuation maps associated to arbitrary paths in $\mathscr{P}_{(a,b)}$ are also defined. Therefore, we have the following; for details, see \cite[\S 2.3]{hutchings-families} and \cite[\S 2.7]{abbondandolo}.
 
\begin{proposition}\label{proposition:localsystem}
    The Morse homology groups $HM_\ast^{(a,b)}(f,g)$ form a local system of finitely-generated graded abelian groups over the space $\mathscr{P}_{(a,b)}$. Under the canonical isomorphism from \autoref{proposition:morse homology}, this local system agrees with the local system of singular homology groups $H_\ast (\{ f \leq b \} , \{f \leq a \} ;\mathbb{Z})$.
\end{proposition}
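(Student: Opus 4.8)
The plan is to assemble the statement from the results already established—\autoref{proposition:morse homology} supplies the groups and the isomorphism with relative singular homology, and \autoref{proposition: continuation} supplies continuation isomorphisms that are homotopy invariant rel endpoints and functorial under concatenation—so that the only genuinely new content is the assertion that these isomorphisms fit together into an isomorphism of \emph{local systems}. I would organise the proof in three steps plus a final compatibility check.

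\emph{Finite generation.} For $(f,g)\in\mathscr{P}_{(a,b)}^{\reg}$ the group $HM_\ast^{(a,b)}(f,g)$ is the homology of a complex freely generated by $\mathrm{Crit}f\cap\{a<f<b\}$, which is finite: $\mathrm{Crit}f$ is compact by \autoref{proposition:compactness} (there are no critical points at infinity for points of $\mathscr{P}_{(a,b)}$), while its points in the action window are nondegenerate, hence isolated, by the Morse condition in item 5 of \autoref{definition: parameters}. Since every group in the local system is related to such a group by a continuation isomorphism, all of them are finitely generated.

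\emph{Local system structure.} Let $G\subseteq\Pi_1(\mathscr{P}_{(a,b)})$ be the full subgroupoid whose objects are the points of $\mathscr{P}_{(a,b)}^{\reg}$; its morphisms are homotopy classes rel endpoints of paths in $\mathscr{P}_{(a,b)}$ (through possibly non-regular points) with regular endpoints. By \autoref{proposition:morse homology}, \autoref{proposition: continuation}, and the fact—noted just before the statement—that continuation maps are defined for arbitrary such paths, the rule $(f,g)\mapsto HM_\ast^{(a,b)}(f,g)$, $[\rho]\mapsto c_{[\rho]}$ is a functor $G\to\mathrm{grAb}$. Since $\mathscr{P}_{(a,b)}$ is locally contractible, hence locally path-connected, and $\mathscr{P}_{(a,b)}^{\reg}$ is dense in it (the Sard--Smale proposition above), every point of $\mathscr{P}_{(a,b)}$ is joined by a path to a regular one; thus $G\hookrightarrow\Pi_1(\mathscr{P}_{(a,b)})$ is essentially surjective, and being a full subgroupoid it is an equivalence of groupoids. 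The functor therefore extends, uniquely up to natural isomorphism, to all of $\Pi_1(\mathscr{P}_{(a,b)})$, which is precisely the local system in the statement; compare \cite[\S 2.3]{hutchings-families} and \cite[\S 2.7]{abbondandolo}.

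\emph{Comparison with singular homology.} The relative groups $H_\ast(\{f\le b\},\{f\le a\};\Z)$ already form a local system over all of $\mathscr{P}_{(a,b)}$: along a path $(f_t,g_t)$ one integrates a vector field equal to a cutoff of $-\nabla_{g_t}f_t$ supported where $f_t$ lies near $\{a,b\}$ to obtain a smooth ambient isotopy $\Phi_t$ of $M$ with $\Phi_t(\{f_0\le c\})=\{f_t\le c\}$ for $c=a,b$; the flow exists because each $g_t$ is complete and, exactly as in the proof of \autoref{lemma:nocrossing}, $|\nabla_{g_t}f_t|$ is bounded away from $0$ near the regular levels $a,b$ uniformly for $t\in[0,1]$. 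The maps on relative homology induced by $\Phi_1$ are homotopy invariant rel endpoints and functorial. One then checks that over $\mathscr{P}_{(a,b)}^{\reg}$ the canonical isomorphisms of \autoref{proposition:morse homology} intertwine these maps with the continuation isomorphisms $c_{[\rho]}$; granting this, the equivalence $G\simeq\Pi_1(\mathscr{P}_{(a,b)})$ of the previous step propagates the natural isomorphism from $\mathscr{P}_{(a,b)}^{\reg}$ to all of $\mathscr{P}_{(a,b)}$, which is what was claimed.

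\emph{Main obstacle.} The one point that is not purely formal is this last compatibility. Because the canonical isomorphism of \autoref{proposition:morse homology} was only asserted, proving its naturality under continuation forces one to fix a construction of it; I would realise it as an explicit chain-level zig-zag of quasi-isomorphisms between $C_\ast^{(a,b)}(f,g)$ and the singular chains of $(\{f\le b\},\{f\le a\})$—for instance via the sublevel-set filtration and its associated cellular complex, or via the unstable-manifold CW model—arranged so that a generic path $(f_t,g_t)$ induces simultaneously the continuation chain map on the Morse side and a filtered chain map modelling $\Phi_t$ on the singular side, making the square commute on the nose; see \cite{schwarz-book} and \cite[\S 2]{abbondandolo}. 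With that in hand the remaining ingredients—finiteness, the groupoid equivalence, and the existence of the comparison isotopies—are routine, resting on the analytic input already established in \autoref{proposition:compactness}, \autoref{proposition:PS}, and \autoref{lemma:nocrossing}.
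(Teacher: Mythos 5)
Your proposal is correct and takes essentially the same approach as the paper, which itself only records the outline and defers details to \cite{hutchings-families} and \cite{abbondandolo}: continuation maps (homotopy-invariant rel endpoints, functorial under concatenation, hence isomorphisms) together with the density of $\mathscr{P}_{(a,b)}^{\reg}$ and the local contractibility of $\mathscr{P}_{(a,b)}$ give the local system, and the canonical isomorphism of \autoref{proposition:morse homology} identifies it with the local system of relative singular homology groups. Your additional material---the groupoid-equivalence formalization, the comparison isotopies (where one should integrate the normalized field $-(\partial_t f_t)\,\nabla_{g_t}f_t/|\nabla_{g_t}f_t|^2$, cut off near the levels $a,b$, so that the sublevel sets are tracked exactly), and the honestly flagged naturality of the isomorphism under continuation---is precisely the content the paper outsources to those references.
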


When $M$ is compact, $a = -\infty$ and $b = +\infty$ then this local system is trivial, since $\mathscr{P}_{(a,b)}$ is contractible. However, in general this local system is non-trivial. 
\section{Finite action Morse homology}
\label{section:finiteaction}

Let $(\overline M, \overline f)$ be a compactification of $(M,f)$. In the presence of critical points at infinity, the construction of Morse homology of the pair $(f,g)$ from the previous section does no longer apply: the critical locus of $f$ may be non-compact and the Palais--Smale property of $(f,g)$ is no longer guaranteed. In this section, we carry out a construction of Morse homology groups in this more general situation by considering certain `large' perturbation of $f$ provided by the geometry of the compactification.


Namely, we consider a perturbation of $f$ naturally associated with the boundary function $\tau = \overline f/f$ of the compactification $(\overline M,\overline f)$:
\begin{equation}
    \label{tau perturbation}
    f_\varepsilon = f + \varepsilon/\tau.
\end{equation}
We show that under a finiteness assumption on the compactification and for $0< \varepsilon \ll 1$, the function $f_\varepsilon$ has no critical points at infinity and its Morse complex contains a subcomplex generated by critical points with bounded action as $\varepsilon \to 0$. The Morse homology of this subcomplex is independent on the various choices and recovers the singular homology group $H_*(M, \{ f < -\lambda \}; \Z)$ with $\lambda \gg 0$.

\subsection{Finite action compactifications}

The following notion is key in controlling the critical points of $f_\varepsilon$ as $\varepsilon \to 0$. 

\begin{definition}
    \label{def:finite-type}
    Let $(\overline M,\overline f)$ be a compactification of $(M,f)$.  Let $\cC_\varepsilon$ be the set of critical values of $f_\varepsilon$ defined in \eqref{tau perturbation}. Define
    $\cC \subset \R$ to be the set of limits of convergent sequences $(c_n)$ with $c_n \in \cC_{\varepsilon_n}$ for some sequence $(\varepsilon_n)$ with $\lim_{n\to\infty} \varepsilon_n = 0$. In particular, $\cC_0 \subset \cC$. We say that $(\overline M, \overline f)$ is a \emph{finite action compactification} if
    \begin{enumerate}
    \item $\cC$ is a bounded set, and
    \item zero is either a regular value or an isolated critical value of $\overline f|_{\partial \overline M}$.
    \end{enumerate}
\end{definition}

\begin{remark}
    A stronger condition would be that $\cC$ and the set of critical values of $\overline f_{\partial \overline M}$ are both finite. As we will see in \autoref{prop:finitetype}, this stronger condition holds in many examples.
\end{remark}

The next proposition follows directly from \autoref{def:finite-type}. 

\begin{proposition}
    \label{prop:finite-type}
    Let $(\overline M, \overline f)$ be a finite action compactification of $(M,f)$.
    \begin{enumerate}
        \item There exists $\varepsilon_0 > 0$ such that if $0 < |\varepsilon| \leq \varepsilon_0$, then $f_\varepsilon$ has no critical points at infinity. 
        \item Let $\lambda >0$ be such that $\cC \subset (-\lambda , \lambda )$. Then for every $\Lambda > \lambda$ there exists $\varepsilon_\Lambda \in (0,\varepsilon_0)$ such that if $|\varepsilon| \leq \varepsilon_\Lambda$, then 
        \[
            \cC_\varepsilon \subset (-\infty,-\Lambda) \sqcup (-\lambda,\lambda) \sqcup (\Lambda, +\infty).
        \]
    \end{enumerate}
\end{proposition}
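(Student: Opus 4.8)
The plan is to derive both assertions essentially formally from \autoref{def:finite-type}: claim (1) by exploiting the structure of the compactification, and claim (2) by a short compactness argument.

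For claim (1), I would first record that $f_\varepsilon$ itself carries a natural compactification. Since $\tau f_\varepsilon = \tau f + \varepsilon = \overline f + \varepsilon$ extends smoothly to $\overline M$, and $(\overline f + \varepsilon)/f_\varepsilon = \tau$, the pair $(\overline M, \overline f + \varepsilon)$ is a compactification of $(M, f_\varepsilon)$ with the same boundary function $\tau$. Because $d(\overline f + \varepsilon) = d\overline f$, a point of $\partial \overline M$ is a stratified critical point of $(\overline f + \varepsilon)|_{\partial\overline M}$ if and only if it is one of $\overline f|_{\partial\overline M}$, and the vanishing condition $(\overline f + \varepsilon)(p) = 0$ becomes $\overline f(p) = -\varepsilon$. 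Hence $f_\varepsilon$ has a critical point at infinity precisely when $-\varepsilon$ is a stratified critical value of $\overline f|_{\partial\overline M}$. I would then argue that the set $K \subset \partial\overline M$ of all stratified critical points of $\overline f|_{\partial\overline M}$ is closed: working in a corner chart $\R^{n,m}$, a sequence of critical points of $\overline f|_{S_n}$ converging to a point $p$ in a stratum $S$ (necessarily of larger depth) yields a point satisfying at least the criticality equations defining $S$. As $\partial\overline M$ is compact, $K$ is compact, so $\overline f(K)$ is a compact subset of $\R$. By hypothesis (2) of \autoref{def:finite-type}, $0$ is either not in $\overline f(K)$ or isolated in it, so there is $\delta > 0$ with $\overline f(K) \cap \bigl((-\delta,\delta)\setminus\{0\}\bigr) = \emptyset$; taking $\varepsilon_0 = \delta/2$ then gives that $-\varepsilon \notin \overline f(K)$ for $0 < |\varepsilon| \leq \varepsilon_0$, which is the claim.

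For claim (2), fix $\lambda$ with $\cC \subset (-\lambda,\lambda)$, let $\Lambda > \lambda$, and set $I = [-\Lambda,-\lambda] \cup [\lambda,\Lambda]$, which is compact and disjoint from $(-\lambda,\lambda)$, hence from $\cC$. If the conclusion failed for this $\Lambda$, then applying the negation with slowing bounds $1/n$ produces sequences $\varepsilon_n \to 0$ and $c_n \in \cC_{\varepsilon_n} \cap I$; passing to a subsequence, $c_n \to c \in I$, and by the very definition of $\cC$ in \autoref{def:finite-type} this limit $c$ lies in $\cC$, contradicting $\cC \cap I = \emptyset$. So some $\varepsilon_\Lambda > 0$ works, and after replacing it by $\min(\varepsilon_\Lambda, \varepsilon_0/2)$ we may assume $\varepsilon_\Lambda \in (0,\varepsilon_0)$; then $|\varepsilon| \leq \varepsilon_\Lambda$ forces $\cC_\varepsilon \cap I = \emptyset$, i.e. $\cC_\varepsilon \subset (-\infty,-\Lambda) \sqcup (-\lambda,\lambda) \sqcup (\Lambda,+\infty)$.

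The only step requiring genuine care is the closedness of the stratified critical locus $K$ in claim (1): one must check that criticality along a stratum is preserved under limits approaching deeper strata. This is a purely local computation in a corner chart, but it is the place where the precise conventions for manifolds with corners and their depth stratifications are actually used; everything else is bookkeeping and a one-line compactness argument.
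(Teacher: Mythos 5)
Your proof is correct and follows the route the paper treats as immediate: identifying $(\overline M,\overline f+\varepsilon)$ as a compactification of $(M,f_\varepsilon)$ so that critical points at infinity of $f_\varepsilon$ correspond to stratified critical value $-\varepsilon$ of $\overline f|_{\partial\overline M}$ (exactly the observation the paper makes in \autoref{section:finiteaction} before \autoref{definition:finite action morse homology}), plus a compactness/definition-unwinding argument for the second claim. The only substantive ingredient you add explicitly---closedness of the stratified critical locus under limits into deeper strata---is handled correctly and is indeed what makes the regular-value case of (1) work.
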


The finite action condition and \autoref{prop:finite-type} imply that as $\varepsilon \to 0$ the critical \emph{values} of $f_\varepsilon$ either remain in a fixed bounded interval or diverge to infinity. The behavior of critical \emph{points} is more complicated. For a small non-zero $\varepsilon$, the set of critical points of $f_\varepsilon$ is compact by \autoref{proposition:compactness}. As $\varepsilon \to 0$, the critical points with unbounded critical values escape to infinity. As for the critical points of $f_\varepsilon$ with bounded critical values, some of them remain in $M$ and converge to critical points of $f$ whereas others escape to infinity.

In general, there is no uniform bound on the critical values of $f_\varepsilon$ as $\varepsilon\to 0$. However, in the following special case (which is, unfortunately, too restrictive for our purposes), such a bound does exist.

\begin{lemma}\label{lemma:bounded}
    Let $(\overline{M}, \overline{f} )$ be a compactification of $(M,f)$. Suppose that $\overline{M}$ is a manifold with boundary and that $\tau = \overline f / f$ vanishes transversely along $\partial \overline{M}$. Given any $\varepsilon_0 > 0$ there exists $C>0$ such that on any critical point of $f_\varepsilon$ with $|\varepsilon|\leq \varepsilon_0$ we have $| f_\varepsilon |\leq C$. 
    
    More generally, the same conclusion holds if there are a constant $c$ and a vector field $\nu$ on a neighborhood $U$ of $\partial \overline M$ such that $\partial_\nu\tau \neq 0$ on $U\setminus\partial \overline M$ and  $| \partial_\nu \overline f | \leq c | \partial_\nu \tau |$ on $M$.
\end{lemma}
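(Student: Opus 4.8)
The plan is to prove the more general statement (the one involving the vector field $\nu$) and then recover the transverse case as a special instance. The heart of the matter is a pointwise algebraic identity at critical points. On $M$ we have $f=\overline f/\tau$, hence $f_\varepsilon=(\overline f+\varepsilon)/\tau$, or equivalently $\tau f_\varepsilon=\overline f+\varepsilon$. Applying $\partial_\nu$ to this relation on $U\cap M$ (where $\nu$ is defined and $\tau>0$) and using that $\varepsilon$ is constant gives
\[
(\partial_\nu\tau)\,f_\varepsilon+\tau\,\partial_\nu f_\varepsilon=\partial_\nu\overline f,\qquad\text{i.e.}\qquad \partial_\nu f_\varepsilon=\frac{1}{\tau}\bigl(\partial_\nu\overline f-f_\varepsilon\,\partial_\nu\tau\bigr).
\]
At a critical point $p\in U\cap M$ of $f_\varepsilon$ one has $df_\varepsilon(p)=0$, so $\partial_\nu f_\varepsilon(p)=0$; since $\tau(p)>0$ this forces $f_\varepsilon(p)\,\partial_\nu\tau(p)=\partial_\nu\overline f(p)$, and dividing by $\partial_\nu\tau(p)\neq 0$ yields $f_\varepsilon(p)=\partial_\nu\overline f(p)/\partial_\nu\tau(p)$, whence $|f_\varepsilon(p)|\leq c$ by hypothesis. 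This bounds $f_\varepsilon$ at every critical point lying in $U$.

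Next I would dispose of critical points outside $U$. The set $K:=\overline M\setminus U$ is closed in the compact space $\overline M$ and disjoint from $\partial\overline M$, hence a compact subset of $M$; therefore $\tau\geq\tau_0>0$ and $|f|\leq f_0$ on $K$ for suitable constants. Any critical point of $f_\varepsilon$ with $|\varepsilon|\leq\varepsilon_0$ that lies in $K$ then satisfies $|f_\varepsilon|\leq f_0+\varepsilon_0/\tau_0$. Combining the two cases, $C:=\max\{c,\,f_0+\varepsilon_0/\tau_0\}$ works, uniformly in the sign of $\varepsilon$ and including $\varepsilon=0$. This proves the general statement.

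For the transverse case I would simply manufacture $\nu$: choose a vector field $X$ on $\overline M$ with $X\tau>0$ along $\partial\overline M$ --- for instance the gradient of $\tau$ with respect to an auxiliary Riemannian metric, since $X\tau=|\nabla\tau|^2>0$ exactly where $d\tau\neq 0$, which includes $\partial\overline M$ by transversality --- and let $U$ be a neighborhood of $\partial\overline M$ with compact closure on which $X\tau>0$. Then $\nu:=X/(X\tau)$ has $\partial_\nu\tau\equiv 1$ on $U$, and since $\overline f$ is smooth on the compact manifold with boundary $\overline M$ while $X$ is smooth on $\overline U$, the function $\partial_\nu\overline f=(X\overline f)/(X\tau)$ is bounded on $\overline U\cap M$, say $|\partial_\nu\overline f|\leq c$; hence $|\partial_\nu\overline f|\leq c\,|\partial_\nu\tau|$ on $U\cap M$ and the general statement applies.

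The main obstacle here is rather mild: it is the clean derivation of the identity in the first paragraph, the key point being that the single scalar equation $\partial_\nu f_\varepsilon(p)=0$, together with $\tau f_\varepsilon=\overline f+\varepsilon$, already solves algebraically for $f_\varepsilon(p)$, so no global analysis or compactness of the critical set is needed. The only remaining care is bookkeeping --- making sure every derivative is evaluated at a point of $M$, where $\tau>0$ and $df_\varepsilon$ is defined, and, in the transverse reduction, arranging $U$ to have compact closure so that $\partial_\nu\overline f$ is genuinely bounded on it.
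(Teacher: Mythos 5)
Your proof is correct and follows essentially the same route as the paper: differentiating $\tau f_\varepsilon=\overline f+\varepsilon$ at a critical point to get $f_\varepsilon(p)(d\tau)_p=(d\overline f)_p$, pairing with $\nu$, and handling the complement of $U$ by compactness. The only cosmetic difference is in the transverse case, where the paper takes a collar with $\tau=y$ and $\nu=\partial/\partial y$ while you normalize a gradient of $\tau$; both serve the same purpose.
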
 

\begin{proof}
    We prove the general case. Let $p \in M$ be a critical point of $f_\varepsilon$. Suppose that $p$ belongs $U\setminus\partial\overline M$. Differentiating $\tau f_\varepsilon = \overline f + \varepsilon$ and evaluating at $p$ yields
    \[
        f_\varepsilon(p) (d \tau)_p = (d \overline f)_p.
    \]
    Applying this to $\nu(p)$ and using the estimate, we arrive at
    \[
        |f_\varepsilon(p)| |\partial_\nu \tau(p) | \leq c  |\partial_\nu \tau(p) |,
    \]
    which implies $|f_\varepsilon(p)| \leq c$. On the other hand, critical values of $f_\varepsilon$ outside of the given neighborhood of $\partial\overline M$ are bounded for small $\varepsilon$ since the critical set of $f|_{M\setminus U}$ is compact. 
    
    If $\overline M$ is a manifold with boundary and $\tau$ vanishes transversely along $\partial M$, then in a collar neighborhood $[0,1) \times \partial\overline M$ of $\partial\overline M$ we have $\tau = y$ where $y$ is the coordinate on $[0,1)$, and the vector field $\nu = \partial / \partial_y$ satifies the assumption of the general case.
\end{proof}

\begin{example}
   The following is example is non-compact but it demonstrates the general principle. Let $\overline M = [0,1)$ with
    \[
        f(y) = y \quad\text{and}\quad \tau(y) = y,
    \]
    so that $\overline f(y) = \tau(y)f(y) =y^2$ is smooth on $\overline M$. 
    The function
    \[
        f_\varepsilon(y) = f(y) + \frac{\varepsilon}{\tau(y)} = y + \varepsilon y^{-1}
    \]
    has a unique critical point $p_\varepsilon = \varepsilon^{1/2}$ in $M = (0,1)$. As $\varepsilon \to 0$, it escapes to infinity $\partial \overline M = \{ 0 \}$ but the critical values $f_\varepsilon(p_\varepsilon) = 2\varepsilon^{1/2}$ remain  bounded. 
\end{example}

\begin{example}
    If $\overline M$ is a manifold with boundary but $\tau$ vanishes to higher order at $\partial\overline M$, then some of the critical points of $f_\varepsilon$ may have unbounded critical values as $\varepsilon \to 0$. For example, for $\overline M = [0,1)$ and
    \[
        f(y) = -y^{-1}  \quad\text{and}\quad \tau(y) = \frac12 y^2,
    \]
    so that $\overline f(y) = \tau(y) f(y) = - \frac 12 y$ is smooth on $\overline M$. The perturbation
    \[
        f_\varepsilon(y) = -y^{-1} + \frac{\varepsilon}{2} y^{-2}
    \]
    has a unique critical point $p_\varepsilon = \varepsilon$ with critical value $f_\varepsilon(p_\varepsilon) = \frac12 \varepsilon^{-1}$. 
\end{example}

\begin{example}
    A sequence of critical points with unbounded critical values may also converge to a lower-dimensional stratum of $\partial\overline M$. For example, let $\overline M = [0,1)^2$ be the Euclidean corner with coordinates $(y_1,y_2)$. Consider
    \[
       f(y) = - (y_1^{-1} + y_2^{-1})  \quad\text{and}\quad \tau(y) = y_1 y_2, 
    \]
    so that $\overline f(y) = \tau(y) f(y) = -(y_1+y_2) $ is smooth on $\overline M$. The perturbation
    \[
        f_\varepsilon(y) = - (y_1^{-1} + y_2^{-1})  + \varepsilon (y_1 y_2)^{-1}
    \]
    has a unique critical point $p_\varepsilon = (\varepsilon, \varepsilon)$ with critical value $f_\varepsilon(p_\varepsilon) = -\varepsilon^{-1}$. 
\end{example}

\begin{example}
    Since $\cC$ contains the set of critical values of $f$, if the latter is unbounded, then so is $\cC$ and as a result, $(M,f)$ does not admit a finite action compactification
\end{example}

\subsection{Locally semialgebraic compactifications}
\label{sec:semialgebraic}

The finite action condition is not, in general, easy to verify. The goal of the next result is to verify it in the situation when the compactification is locally described by \emph{Nash functions}. Nash functions are generalizations of real polynomial functions, and share many of their properties. We begin by recalling their definition and basic properties; for details, see \cites{shiota, bochnak}.

Recall that a set $T\subset \R^N$ is \emph{semialgebraic} if it is a finite union of sets described by finitely many polynomial equations and inequalities, that is of the form $T = \bigcup_{i=1}^k T_i$ where each $T_i$ is of the form
\[
    T_i = \{ x \in \R^N \ | \ p_1(x) = \ldots = p_r(x) = 0, \quad q_1(x) > 0, \ \ldots , \ q_s(x) > 0 \},
\]
where $p_1, \ldots, p_r, q_1,\ldots, q_s$ are real polynomials. Given such a presentation, define the \emph{complexity} of $T_i$ by the sum of the degrees of these polynomials and the complexity of $T$ by the sum of the complexities of $T_1, \ldots, T_k$. 

We will use the following classical results about semialgebraic sets $T \subset \R^N$:
\begin{enumerate}
    \item The closure and the interior of $T$ are both semialgebraic sets. 
    \item \emph{Tarski--Seidenberg theorem.} The projection of $T$ on $\R^{N-1} \subset \R^N$ is semialgebraic. 
    \item The rank of homology $H_*(T,\Z)$ (in particular, the number of connected components) is bounded by a constant depending only the complexity of $T$ \cite[Theorem 3]{milnor}.
    \item \emph{Curve selection lemma.} For every $p_0 \in \overline T$ there exists an analytic map $p \colon [0,\varepsilon) \to \R^N$ such that $p(0) = p_0$ and $p(t) \in T$ for every $t \in (0,\varepsilon)$.
\end{enumerate}

A function $f \colon T \to \R$ is \emph{semialgebraic} if its graph 
\[
    \Gamma_f = \{ (x, f(x)) \in T \times \R \ | \ x \in T \} \subset \R^{N+1}
\]
is semialgebraic. If $T$ is a $C^k$ submanifold with corners of $\R^N$, then a \emph{$C^k$ Nash function} on $T$ is a function $f \colon T \to \R$ which is both of class $C^k$ and semialgebraic. 

\begin{definition}
    \label{definition:semialgebraic}
    A compactification $(\overline{M} , \overline{f} )$ of $(M , f )$ is \textit{locally semialgebraic} if:
    \begin{enumerate}
    \item there exists a finite open cover $\overline{U}_1 , \ldots , \overline{U}_l$ of $\overline{M}$ by $C^1$ submanifolds with corners;
    \item for each $j \in \{1 , \ldots , l\}$ there is a $C^1$ embedding $\phi_j \colon \overline U_j \hookrightarrow \R^{N_j}$ whose image $\phi_j(\overline U_j)$ is a semialgebraic $C^1$ submanifold with corners;
    \item the functions $\overline{f}\circ (\phi_j )^{-1}$ and $f \circ (\phi_j)^{-1}$ are $C^1$ Nash functions on $\phi_j(\overline U_j)$ and $\phi_j(\overline U_j \cap M)$, respectively. 
     \end{enumerate} 
\end{definition}

\begin{proposition}
    \label{prop:finitetype}
    If a compactification $(\overline M,\overline f)$ of $(M,f)$ is locally semialgebraic, then both $\cC$ and the set of critical values of $\overline f_{\partial\overline M}$ are finite. In particular, $(\overline M, \overline f)$ is a finite action compactification in the sense of \autoref{def:finite-type}. 
\end{proposition}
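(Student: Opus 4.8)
The plan is to reduce everything to a finiteness statement about semialgebraic families. Recall that a semialgebraic subset of $\R^N$ has only finitely many connected components, and — more to the point here — a semialgebraic function $h \colon T \to \R$ on a semialgebraic set takes only finitely many critical values in the stratified sense; this is a classical consequence of the cell decomposition / trivialization theorems for semialgebraic maps (see \cite{bochnak, shiota}). So the core of the argument is to exhibit all the relevant quantities as values of semialgebraic functions on semialgebraic sets, working chart by chart in the cover $\overline U_1,\ldots,\overline U_l$ from \autoref{definition:semialgebraic}.

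First I would treat the set of critical values of $\overline f|_{\partial\overline M}$. Fix a chart $\phi_j \colon \overline U_j \hookrightarrow \R^{N_j}$. The image $\phi_j(\overline U_j)$ is a semialgebraic $C^1$ submanifold with corners, hence admits a finite semialgebraic stratification by its depth strata, each of which is a semialgebraic $C^1$ manifold; the boundary $\phi_j(\overline U_j \cap \partial\overline M)$ is a union of these lower strata. On each such stratum $S$, the function $\overline f \circ \phi_j^{-1}|_S$ is a $C^1$ Nash function, so its set of critical values is finite. Taking the union over the finitely many strata and the finitely many charts shows that the critical values of $\overline f|_{\partial\overline M}$ form a finite set; in particular $0$ is either a regular value or an isolated critical value, giving condition (2) of \autoref{def:finite-type}.

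The main work is condition (1): boundedness (indeed finiteness) of $\cC$. The idea is to see the critical values of $f_\varepsilon = f + \varepsilon/\tau$ as a semialgebraic function of $\varepsilon$. In a chart $\phi_j$, the functions $f$ and $\overline f$ (hence $\tau = \overline f/f$, away from the zero locus of $f$) are Nash, so the equation $df_\varepsilon = 0$ defines a semialgebraic subset $Z_j \subset \phi_j(\overline U_j \cap M) \times (0,\varepsilon_0)$ — the incidence locus of critical points and parameters — and the assignment $(p,\varepsilon) \mapsto f_\varepsilon(p)$ is a semialgebraic function on $Z_j$. By the Tarski–Seidenberg theorem, its image in $\R \times (0,\varepsilon_0)$, namely the set $\{(\varepsilon,c) : c \in \cC_\varepsilon^{(j)}\}$ of (chart-$j$) critical values, is semialgebraic. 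A semialgebraic subset of $\R^2$ has only finitely many connected components, and by the curve selection lemma each accumulation point as $\varepsilon\to 0$ lies on the closure of one of these pieces; applying the semialgebraic (Łojasiewicz / curve-selection) analysis of the fibers shows that the limit set $\{c : \exists\, \varepsilon_n\to 0,\ c_n\in\cC_{\varepsilon_n}^{(j)},\ c_n\to c\}$ is finite — concretely, one shows each connected component of the image has a well-defined (finite or infinite) limit of its $c$-coordinate as $\varepsilon\to 0^+$, using that a semialgebraic function of one variable has a limit in $\R\cup\{\pm\infty\}$ at an endpoint. Taking the union over the finitely many charts $j=1,\ldots,l$ yields that $\cC$ is finite, hence bounded.

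The step I expect to be the main obstacle is the last one: carefully controlling the $\varepsilon\to 0$ limits of critical values using semialgebraicity. The subtlety is that critical points themselves may escape to the boundary (as the examples preceding the proposition show), so one cannot simply pass to a limit in $M$; one must argue entirely at the level of the semialgebraic \emph{set of critical values} and its closure in $\R\times[0,\varepsilon_0)$, invoking finiteness of connected components and the existence of one-variable limits rather than compactness. Once this is set up correctly the conclusion is immediate, and the final sentence of the proposition — that a locally semialgebraic compactification is a finite action compactification — follows directly from \autoref{def:finite-type}.
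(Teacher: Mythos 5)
Your treatment of the boundary critical values matches the paper's, and for the finiteness of $\cC$ you take a genuinely different route: instead of the paper's argument---a bound on the number of critical values of $f_\varepsilon$, uniform in $\varepsilon$, obtained from Milnor's complexity bound (the critical locus in each chart is semialgebraic of complexity independent of $\varepsilon$), followed by a curve-selection contradiction at the limit points $(p^i,0)$---you project the incidence set to the $(\varepsilon,c)$-plane via Tarski--Seidenberg and try to read off the $\varepsilon\to 0$ limits there. That projection idea can be made to work and is arguably cleaner, but as written the decisive step is not justified: it is simply false that every connected component of a semialgebraic subset of $\R^2$ has a well-defined limit of its $c$-coordinate as $\varepsilon\to 0^+$ (take the open square $(0,1)^2$: it is connected and semialgebraic, yet every $c\in[0,1]$ arises as a limit along $\varepsilon\to 0$). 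So semialgebraicity of $V_j=\{(\varepsilon,c)\ :\ c\in\cC^{(j)}_\varepsilon\}$ together with finiteness of its set of connected components does not, by itself, exclude an interval's worth of limit values.

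The missing ingredient is the per-$\varepsilon$ finiteness of the set of critical values of $f_\varepsilon$ in each chart. Once you know each fiber of $V_j$ over $\varepsilon$ is finite, $V_j$ has dimension at most one, so by semialgebraic cell decomposition it is a finite union of points and graphs of continuous semialgebraic functions $\varepsilon\mapsto \xi_k(\varepsilon)$, and the one-variable limit/monotonicity theorem you invoke then applies to each $\xi_k$, giving finitely many limits at $\varepsilon=0$. This is exactly the ingredient the paper supplies (even uniformly in $\varepsilon$), by combining semialgebraicity of the critical locus, the fact that $f_\varepsilon$ is constant on each connected component of it, and Milnor's bound on the number of connected components in terms of complexity; you never state or prove it, and the appeal to ``\L{}ojasiewicz / curve-selection analysis of the fibers'' does not substitute for it. Two smaller omissions: the paper's $\cC$ also contains limits along negative $\varepsilon_n$ and the critical values $\cC_0$ of $f$ itself, so your incidence set should live over a punctured neighborhood of $0$ rather than $(0,\varepsilon_0)$, and finiteness of $\cC_0$ again requires the same per-$\varepsilon$ finiteness, now at $\varepsilon=0$. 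With these repairs your argument closes, and it bypasses the paper's curve-selection contradiction entirely.
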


\begin{proof}
    After embedding, we may assume that $\overline U_j \subset \R^{N_j}$ is a $C^1$ submanifold with corners and $U_j = \overline U_j \cap M$ is its top stratum. Since $f_\varepsilon \colon U_j \to \R$ is a $C^1$ Nash function, the set of its critical points is semialgebraic \cite[Theorem 9.6.2]{bochnak}. 
    The complexity of this set depends only on the complexity of $U_j$ and the graphs of $f$ and $\overline f$; in particular, it is independent of $\varepsilon$. Since $f_\varepsilon$ is constant on each connected component of the critical locus, it follows from Milnor's theorem mentioned earlier that there exists $m_j$, independent of $\varepsilon$, such that  $f_\varepsilon$ has at most $m_j$ critical values on $U_j$ for every $\varepsilon$. Therefore, $f_\varepsilon$ has at most $m = \sum_{j = 1}^l m_j$ critical values on $M = \bigcup_{j=1}^l U_j$. 
    
    By applying this argument to the restriction of $\overline f$ to the boundary strata of $\overline{M}$ we prove that $\overline f|_{\partial\overline M}$ has finitely many critical values which verifies the second condition in \autoref{def:finite-type}.

    To verify the first condition in \autoref{def:finite-type}, we will prove that $\mathcal{C}\setminus\cC_0$ has at most $m$ elements. Suppose, by contradiction, that $\mathcal{C}\setminus\cC_0$ contains $m+1$ distinct points $c^1, \ldots, c^{m+1}$. For each $i=1,\ldots,m+1$,  let $(\varepsilon^i_n)$ be a sequence in $(0,1)$ and $(p^i_n)$ a sequence $M$ such that
    \begin{itemize}
        \item $\lim_{n\to\infty} \varepsilon^i_n = 0$,
        \item $p_i^n$ is a critical point of $f_{\varepsilon^i_n}$ with critical value $c^i_n$,
        \item $\lim_{n\to\infty} c^i_n = c^i$.
    \end{itemize}
    After passing to a subsequence $(p^i_n)$ converges to a point $p^i \in \overline M$. Choose $\delta > 0$ small enough so that any two distinct values in $\{ c^1, \ldots, c^{m+1} \}$ are distance at least $2\delta$ apart. Consider 
    \begin{align*}
        C_{j}^{c,\delta} = \{ (x,\varepsilon)\in U_j \times \mathbb{R} \ | \ \varepsilon \neq 0, \ (df_\varepsilon )_x = 0,  \  c - \delta < f_\varepsilon(x) < c+\delta \}.
    \end{align*}
    We claim that this set is semialgebraic. Indeed, by the previous argument, the set
    \[
        \{ (x,\varepsilon, y) \in U_j \times \R \times \R \ | \ (df_\varepsilon )_x = 0 \}
    \]
    is semialgebraic, and so are the sets
    \[
        \{ (x,\varepsilon,y) \in U_j \times \R \times \R \ | \varepsilon \neq 0 ,\, \ c-\delta < y < c + \delta \}
    \]
    and
    \[
        \{ (x,\varepsilon,y) \in U_j \times \R \times \R \ | y = f_\varepsilon(x) \}.
    \]
    Therefore, their intersection is semialgebraic. By the Tarski--Seidenberg theorem, so is    $C_{j}^{c,\delta}$ as the projection of the above intersection on $U_j \times \R$. 
    
    By construction, for every $i$ the point $(p^i,0) \in \overline{M} \times [0,1)$ lies in the closure of $C_{j}^{c^i,\delta}$ for some $j$. The curve selection lemma implies that there are $C^0$ curves
    $\gamma^i \colon [0,1) \to \R^{N_j}$ and $\varepsilon^i \colon [0,1) \to [0,1)$ such that $\gamma^i(0) = p^i$, $\varepsilon^i(0) = 0$, and $(\gamma^i(t), \varepsilon^i(t)) \in C_{j}^{c^i,\delta}$ for $t \in (0,1)$. In particular, $\varepsilon^i(t) > 0$ for $t>0$. Choose $\varepsilon > 0$ which is in the image of all paths $\varepsilon^i \colon [0,1) \to [0,1)$, that is $\varepsilon = \varepsilon^i(t^i)$ for some $t^i \in (0,1)$. Set $q^i = \gamma^i(t^i)$, so that $q^1, \ldots, q^{m+1}$ are critical points of $f_\varepsilon$ on $M$. The corresponding critical values are distinct because $(q^i,\varepsilon) \in C_{j}^{c^i,\delta}$ and the $c^i$'s are at least $2\delta$ apart. Therefore, $f_\varepsilon$ has at least $m+1$ critical values, which is a contradiction. This proves that $\mathcal{C} \setminus \mathcal{C}_0$ has at most $m$ elements. 
\end{proof}

\subsection{Finite action Morse complex}

Suppose that $(M,f)$ admits a finite action compactification. We will use the framework developed in \autoref{section:morsehomology} to associate to $f$ a Morse homology group which recovers the relative singular homology $H_*(X, \{ f < -\lambda \}; \Z)$ for $\lambda \gg 0$. This group will be generated by those critical points of $f_\varepsilon$, for $0 < \varepsilon \ll 1$, whose action remains finite as $\varepsilon \to 0$. 

Let $\cC$ be the set from \autoref{def:finite-type}. Choose $\lambda > 0$ large enough so that
\begin{equation}
    \label{lambda}
    \cC \subset (-\lambda , \lambda ).
\end{equation}
By \autoref{prop:finite-type}, there exists $\varepsilon_0 > 0$ such that if $0 < \varepsilon \leq \varepsilon_0$ then 
\[
f_\varepsilon ( \mathrm{Crit}f_\varepsilon ) \subset (-\infty , -2\lambda ) \sqcup (-\lambda , \lambda ) \sqcup (2\lambda , +\infty ).
\]
In particular, $\pm \lambda$ are regular values of all the functions $f_\varepsilon$ with $0 < \varepsilon \leq \varepsilon_0$. 
Moreover, by \autoref{def:finite-type}, zero is either a regular value or an isolated critical value of $\overline f|_{\partial\overline M}$. By making $\varepsilon_0$ smaller than the absolute value of the smallest negative critical value of $\overline f|_{\partial\overline M}$, we guarantee that zero is not a critical value of $(\overline f + \varepsilon) |_{\partial\overline M}$ for all $0 < \varepsilon \leq \varepsilon_0$. In other words, $f_\varepsilon$ has no critical points at infinity for all  $0 < \varepsilon \leq \varepsilon_0$.

Let $\leftindex^{b} {g}$ be a logarithmic Riemannian metric on $\overline M$ and let $g = \tau^{-1} \cdot \leftindex^{b} {g}$ be the associated conical metric, as in \autoref{subsection: palais-smale}.  By construction, for every $0 < \varepsilon \leq \varepsilon_0$, the pair $(f_\varepsilon, g)$ belongs to the parameter space $\mathscr{P}_{(-\lambda , \lambda )}$ introduced in \autoref{definition: parameters}. Thus, there is associated Morse homology $HM_*^{(-\lambda,\lambda)}(f_\varepsilon, g)$. By \autoref{proposition: continuation}, given $0 < \varepsilon' \leq \varepsilon \leq \varepsilon_0$ there is an associated continuation isomorphism associated with the map $(f_{t\varepsilon' + (1-t)\varepsilon},g)$ with $t \in [0,1]$,
\[
    HM_*^{(-\lambda,\lambda)}(f_\varepsilon, g) \xrightarrow{\cong} HM_*^{(-\lambda,\lambda)}(f_{\varepsilon'}, g).
\]
Thus, as $\varepsilon \downarrow 0$, the groups $HM_*^{(-\lambda,\lambda)}(f_\varepsilon, g) $ form a directed system. 
Of course, all maps in this directed system are isomorphisms; the reason we are giving a preference to the direction $\varepsilon \downarrow 0$ is due to the following:
\begin{remark}
More generally, suppose that $0 < \varepsilon' \leq \varepsilon$ and both pairs $(f_{\varepsilon},g)$, $(f_{\varepsilon'},g )$ are contained in $\mathscr{P}_{(-\lambda , \lambda )}$. We are no longer assuming $\varepsilon \leq \varepsilon_0$, thus there may exists $\varepsilon'' \in (\varepsilon' , \varepsilon )$ with $(f_{\varepsilon''} , g) \notin \mathscr{P}_{(-\lambda , \lambda )}$. A continuation map associated with the path $ f_{t \varepsilon' + (1-t)\varepsilon}$, $t \in [0,1]$, can still be constructed in this case,
\[
HM_*^{(-\lambda,\lambda)}(f_\varepsilon, g) \to HM_*^{(-\lambda,\lambda)}(f_{\varepsilon'}, g) .
\]
This is defined exactly as before: the key point is that $\partial_t f_{t \varepsilon' + (1-t)\varepsilon} = (\varepsilon' - \varepsilon )/\tau \leq 0$, and thus the inequality (\ref{inequality_strip}) still holds (for every $\delta >0$, in fact). However, this continuation map may no longer be an isomorphism.
\end{remark}

We have now arrived to the crucial notion in this article:

\begin{definition}
    \label{definition:finite action morse homology}
Given a finite action compactification $(\overline{M} , \overline{f} )$ of $(M, f)$, the \textit{finite action Morse homology} of $(M, f)$ is the finitely-generated graded abelian group $HM_\ast (f,g )$ defined as the direct limit 
\[
    HM_\ast (f,g ) \coloneq \varinjlim_{\varepsilon \downarrow 0} HM_\ast^{(-\lambda , \lambda )}(f_\varepsilon , g ).
\]
where $\lambda$ is any constant satisfying \eqref{lambda}. Note that, since the continuation maps are all isomorphisms for $0 < \varepsilon \leq \varepsilon_0$, we have a canonical isomorphism for each $0 < \varepsilon \leq \varepsilon_0$
\[
HM_\ast (f,g) \cong HM_\ast^{(-\lambda , \lambda)}(f_\varepsilon , g ) .
\]
Similarly, the \textit{finite action Morse cohomology} of $(M,f)$ is defined as the inverse limit
\[
    HM^\ast (f,g) \coloneq \varprojlim_{\varepsilon \downarrow 0} HM_{(-\lambda, \lambda )}^\ast (f_\varepsilon , g ).
\]
\end{definition}

\begin{remark}\label{remark:-homology}
In the above definition we have only made use of the perturbations
\[
    f_\varepsilon = f + \varepsilon/\tau \quad\text{with } \varepsilon >0.
\]
The same construction can be carried out with $\varepsilon < 0$, leading to two more groups:
\begin{align*}
HM_{\ast}^{-} (f,g) & \coloneq \varprojlim_{\varepsilon \uparrow 0} HM^{(-\lambda, \lambda )}_\ast (f_\varepsilon , g ) \\
HM^{\ast}_{-} (f,g) & \coloneq \varinjlim_{\varepsilon \uparrow 0} HM_{(-\lambda, \lambda )}^\ast (f_\varepsilon , g ).
\end{align*}
However, we shall not make much reference to these groups, as they are related via standard dualities to the previously defined groups 
\[
HM_\ast^{-} (f,g) \cong HM^{\dim M-\ast} (- f , g), , \quad HM_-^\ast (f,g) \cong HM_{\dim M-\ast} (-f , g ).
\] 
\end{remark}

\subsection{Isomorphism with singular homology}

By \autoref{proposition: continuation}, up to isomorphism the finite action Morse homology $HM_\ast (f,g)$ depends only on $f \colon M \to \mathbb{R}$ and, a priori, on the choice of the finite action compactification $(\overline{M} , \overline{f} )$. We shall now see that it is, in fact, indepedent of the choice of $(\overline{M} , \overline{f} )$. The following result and Proposition \ref{prop:finitetype} prove Theorem \ref{theorem:main_real}.

\begin{proposition}\label{proposition:isosingular}
    Let $(\overline{M} , \overline{f} )$ be a finite action compactification of $(M,f)$. For every $\lambda \gg 1$ there are canonical isomorphisms between finite action Morse (co)homology with relative singular (co)homology
    \begin{gather*}
        HM_\ast (f,g) \cong H_\ast (M , \{f < -\lambda \} ; \mathbb{Z} ), \\
        HM^\ast (f,g) \cong H^\ast ( M  , \{ f < -\lambda \} ; \mathbb{Z} ).
    \end{gather*}
\end{proposition}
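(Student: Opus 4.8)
The strategy is to pass to the perturbations $f_\varepsilon$, where \autoref{proposition:morse homology} applies, and then take the limit $\varepsilon\downarrow 0$. Fix $\lambda$ as in \eqref{lambda} and $\varepsilon_0>0$ as above, so that $(f_\varepsilon,g)\in\mathscr{P}_{(-\lambda,\lambda)}$ for $0<\varepsilon\le\varepsilon_0$. By \autoref{proposition:morse homology} there is a canonical isomorphism $HM^{(-\lambda,\lambda)}_\ast(f_\varepsilon,g)\cong H_\ast(\{f_\varepsilon\le\lambda\},\{f_\varepsilon\le-\lambda\};\mathbb{Z})$, and by \autoref{proposition:localsystem} these isomorphisms are compatible with the continuation maps along the paths $(f_{t\varepsilon'+(1-t)\varepsilon},g)$ — whose sublevel sets are nested — on one side and the inclusion-induced maps on the other. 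Taking the direct limit over $\varepsilon\downarrow0$ then gives
\[
HM_\ast(f,g)\;\cong\;\varinjlim_{\varepsilon\downarrow0}H_\ast\big(\{f_\varepsilon\le\lambda\},\{f_\varepsilon\le-\lambda\};\mathbb{Z}\big).
\]

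To evaluate this colimit I would use that on $M$ one has $f_\varepsilon=(\overline f+\varepsilon)/\tau$ with $\tau>0$, so that $\bigcup_{\varepsilon>0}\{f_\varepsilon\le c\}=\{f<c\}$ for every $c$, and the closed sublevel sets $\{f_\varepsilon\le c\}$ are cofinally interleaved with the open sublevel sets $\{f_{\varepsilon'}<c\}$ (indeed $\{f_\varepsilon\le c\}\subset\{f_{\varepsilon'}<c\}$ whenever $\varepsilon'<\varepsilon$). Since singular homology commutes with increasing unions of open subsets, this gives $\varinjlim_\varepsilon H_\ast(\{f_\varepsilon\le c\})\cong H_\ast(\{f<c\})$; applying the exact functor $\varinjlim$ to the long exact sequences of the pairs and comparing with the long exact sequence of $(\{f<\lambda\},\{f<-\lambda\})$ by the five lemma yields $HM_\ast(f,g)\cong H_\ast(\{f<\lambda\},\{f<-\lambda\};\mathbb{Z})$.

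The remaining and crucial step is to show that $\{f<\lambda\}\hookrightarrow M$ induces an isomorphism on homology, equivalently $H_\ast(M,\{f<\lambda\})=0$; this is where \autoref{prop:finite-type} enters essentially. The naive argument — deformation retracting $M$ onto $\{f\le\lambda\}$ along the negative gradient of $f$ — is unavailable because $f$ need not satisfy the Palais--Smale condition. Instead I would, using \autoref{prop:finite-type}(2), choose for each small $\varepsilon$ a cutoff $\Lambda(\varepsilon)$ with $\Lambda(\varepsilon)\to\infty$ as $\varepsilon\to0$ and such that $f_\varepsilon$ has no critical values in $[\lambda,\Lambda(\varepsilon)]$. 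Since $(f_\varepsilon,g)$ \emph{does} satisfy Palais--Smale (\autoref{proposition:PS}), the sublevel set $\{f_\varepsilon\le\Lambda(\varepsilon)\}$ deformation retracts onto $\{f_\varepsilon\le\lambda\}$, compatibly with the inclusions of the directed system, so $\varinjlim_\varepsilon H_\ast(M,\{f_\varepsilon\le\lambda\})\cong\varinjlim_\varepsilon H_\ast(M,\{f_\varepsilon\le\Lambda(\varepsilon)\})$. As $\bigcup_\varepsilon\{f_\varepsilon\le\Lambda(\varepsilon)\}=M$ (again with interleaving by open sublevel sets), the latter colimit is $H_\ast(M,M)=0$; combined with $\varinjlim_\varepsilon H_\ast(M,\{f_\varepsilon\le\lambda\})\cong H_\ast(M,\{f<\lambda\})$ from the previous paragraph this gives $H_\ast(M,\{f<\lambda\})=0$, and the long exact sequence of the triple $(M,\{f<\lambda\},\{f<-\lambda\})$ finishes the homological statement. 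For cohomology: the colimit over $\varepsilon\le\varepsilon_0$ is constant, so the homology isomorphism is realised, for each such $\varepsilon$, by the inclusion-induced map of pairs $(\{f_\varepsilon\le\lambda\},\{f_\varepsilon\le-\lambda\})\hookrightarrow(M,\{f<-\lambda\})$; as all groups in sight are finitely generated, naturality of the universal coefficients theorem shows the dual map is also an isomorphism, and passing to the inverse limit over $\varepsilon\le\varepsilon_0$ gives $HM^\ast(f,g)\cong H^\ast(M,\{f<-\lambda\})$. All isomorphisms are canonical, and since the right-hand sides are independent of the compactification, this also completes the proof of \autoref{theorem:main_real}.

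I expect the main obstacle to be the vanishing $H_\ast(M,\{f<\lambda\})=0$: it forces one to substitute, for the missing gradient-flow retraction of $f$, the well-behaved retractions of the auxiliary functions $f_\varepsilon$ together with the finite-action control on their critical values. A secondary point requiring care — handled by the interleaving with open sublevel sets — is that the closed sets $\{f_\varepsilon\le c\}$ are not compact: they accumulate along the locus $\{\overline f<-\varepsilon\}\subset\partial\overline M$, so they cannot be treated as a compact exhaustion of $M$.
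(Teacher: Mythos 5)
Your argument is correct and uses the same essential ingredients as the paper --- the gap $[\lambda,\Lambda(\varepsilon)]$ in the critical values of $f_\varepsilon$ provided by \autoref{prop:finite-type}, the identification of \autoref{proposition:morse homology}, and an exhaustion/colimit argument --- but it is organized differently. The paper performs the enlargement of the window on the Morse side: since $f_{\varepsilon_n}$ has no critical values in $[\lambda,\Lambda_n]$, the complexes for the windows $(-\lambda,\lambda)$ and $(-\lambda,\Lambda_n)$ literally coincide, so $HM_\ast^{(-\lambda,\lambda)}(f_{\varepsilon_n},g)\cong HM_\ast^{(-\lambda,\Lambda_n)}(f_{\varepsilon_n},g)$ for free; applying \autoref{proposition:morse homology} with the big window and taking one colimit of the pairs $(\{f_{\varepsilon_n}\le\Lambda_n\},\{f_{\varepsilon_n}\le-\lambda\})$, which exhaust $(M,\{f<-\lambda\})$, gives the result directly, with the colimit justified by the cofibration/convergence results cited from tom Dieck. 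You instead keep the small window, identify the colimit with $H_\ast(\{f<\lambda\},\{f<-\lambda\})$, and then prove separately that $H_\ast(M,\{f<\lambda\})=0$ by substituting for the missing gradient retraction of $f$ the retractions of $\{f_\varepsilon\le\Lambda(\varepsilon)\}$ onto $\{f_\varepsilon\le\lambda\}$, which exist by Palais--Smale for $f_\varepsilon$ (\autoref{proposition:PS}) and the absence of critical values in the band; a long exact sequence of the triple then finishes. This buys a more explicitly topological statement (the vanishing of $H_\ast(M,\{f<\lambda\})$) at the cost of an extra deformation-retraction argument that the paper avoids entirely, since on the Morse side the window enlargement is tautological. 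Two small points: you should take $\Lambda(\varepsilon)$ non-decreasing as $\varepsilon\downarrow 0$ (as the paper does with $\Lambda_n$) so that $\{f_\varepsilon\le\Lambda(\varepsilon)\}\subset\{f_{\varepsilon'}\le\Lambda(\varepsilon')\}$ and your second system really is directed; and in the cohomology step finite generation is not needed --- naturality of the universal coefficient sequence plus the five lemma already shows that a map inducing isomorphisms on integral homology induces isomorphisms on cohomology. Your cohomology argument, exploiting that the system is constant for $\varepsilon\le\varepsilon_0$, is in fact slightly more elementary than the paper's appeal to inverse-limit convergence theorems. Both proofs share the same implicit reliance on \autoref{proposition:localsystem} to match continuation maps with inclusion-induced maps, so you are at the same level of rigor as the paper on that point.
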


\begin{proof}
    Let $(\varepsilon_n)$ be a decreasing sequence with $0 < \varepsilon_n  \leq \varepsilon_0$ and $\varepsilon_n \to 0$. By \autoref{prop:finite-type}, there exists an unbounded increasing sequence $\Lambda_n  > \lambda$ such that
    \[
    f_{\varepsilon_n}(\mathrm{Crit}f_{\varepsilon_n}) \subset (-\infty , -\Lambda_n ) \sqcup (-\lambda , \lambda ) \sqcup (\Lambda_n , +\infty ).
    \]
    It follows from the definition of Morse homology that for every $n$,
    \[
        HM_\ast^{(-\lambda , \lambda )}(f_{\varepsilon_n} , g ) \cong HM_\ast^{(-\lambda , \Lambda_n )}(f_{\varepsilon_n} , g ).
    \]
    We have the following string of isomorphisms:
    \begin{align*}
        HM_\ast (f,g) & = \varinjlim_{\varepsilon \downarrow 0} HM_\ast^{(-\lambda , \lambda )}(f_\varepsilon ,g)\\
        & \cong \varinjlim_{n \to \infty} HM_\ast^{(-\lambda , \lambda )}(f_{\varepsilon_n},g)\\
        & \cong \varinjlim_{n \to \infty} HM_\ast^{(-\lambda , \Lambda_n )}(f_{\varepsilon_n},g)\\
        & \cong \varinjlim_{n \to \infty}H_\ast (\{ f_{\varepsilon_n} \leq \Lambda_n \} , \{f_{\varepsilon_n} \leq -\lambda \} ; \mathbb{Z} ) \quad \text{by \autoref{proposition:morse homology}}\\
        & \cong H_\ast ( M , \{ f< -\lambda \} ; \mathbb{Z} ).
    \end{align*}
    Here, the last isomorphism follows from the fact that the inclusions 
    \[
        \{f_{\varepsilon_n} \leq \Lambda_n\} \subset \{f_{\varepsilon_{n+1}}\leq \Lambda_{n+1}\} \quad\text{and}\quad \{f_{\varepsilon_n} \leq -\lambda\} \subset \{f_{\varepsilon_{n+1}}\leq -\lambda\}
    \] 
    are cofibrations, and that $M$ and $\{ f < -\lambda \}$ are exhausted by the subspaces $\{ f_{\varepsilon_n}\leq \Lambda_n \}$ and $\{ f_{\varepsilon_n} \leq - \lambda \}$, respectively, as $n\to\infty$. Thus standard convergence theorems for singular homology apply; see \cite[Proposition 10.8.1 and Proposition 10.8.4]{tomdieck}.
    
    As for cohomology, we similarly obtain
    \begin{align*}
        HM^\ast (f,g) & \cong \varprojlim_{n \to \infty} HM^\ast_{(-\lambda , \Lambda_n )}(f_{\varepsilon_n},g)\\
        & \cong \varprojlim_{n \to \infty}H^\ast (\{ f_{\varepsilon_n} \leq \Lambda_n \} , \{f_{\varepsilon_n} \leq -\lambda \} ; \mathbb{Z} ) \quad \text{by \autoref{proposition:morse homology}}\\
        & \cong H^\ast ( M , \{ f< -\lambda \} ; \mathbb{Z} )
    \end{align*}
    where in order to deduce the last isomorphism we now make use of the fact that the limit as $n \to \infty$ of the groups $H^\ast (\{ f_{\varepsilon_n} \leq \Lambda_n \} , \{f_{\varepsilon_n} \leq -\lambda \} ; \mathbb{Z} )$ is stabilizing (which follows from the fact that the corresponding limit of Morse cohomologies  stabilizes),  together with standard convergence results for singular cohomology (\cite[Proposition 10.8.4, Proposition 17.1.6 and Proposition 17.1.7]{tomdieck}).
\end{proof}

\begin{remark}
    Using standard dualities in Morse homology and singular (co)homology, we deduce from \autoref{remark:-homology} and  \autoref{proposition:isosingular} the following canonical isomorphisms:
    \begin{align*}
    HM^-_\ast (f,g) & \cong H^{n-\ast} (M , \{f > \lambda \})\cong H_{\ast}^{BM}(M , \{f>\lambda \}),\\
    HM_-^\ast (f,g) & \cong H_{n-\ast}(M , \{ f>\lambda \}) \cong H^{\ast}_c(M, \{f>\lambda \}).
\end{align*}
\end{remark}

\section{Morse homology of complex algebraic varieties}
\label{section:algebraic}

The goal of this section is to associate a Morse homology group $HM_*(X,F)$ with a regular function $F \colon X \to \C$ on a smooth complex algebraic variety, and to prove \autoref{theorem:main1}. The construction proceeds as follows. Using Hironaka's resolution of singularities and the real blow-up construction, we build a compactification $(\overline M, \overline F)$ of $(X,F)$ in the sense of \autoref{section:compactifications}. Then $HM_*(X,F)$ is defined as the finite action Morse homology, introduced in \autoref{section:finiteaction}, of the real part $\Re(F)$. 

\subsection{Almost complex compactifications}

\autoref{definition:compactification} of the compactification can be adapted to take into account almost complex structures. Recall that an almost complex structure $I$ on a manifold $M$ is endomorphism $I : TX \to TX$ satisfying $I^2 = -1$.  

\begin{definition}
~
\begin{enumerate}
    \item Let $\overline{X}$ be a manifold with corners. A \textit{logarithmic almost complex structure} on $\overline{X}$ is a smooth bundle endomorphism $\leftindex^{b}I$ of $\leftindex^{b} {T}\overline{X}$ satisfying $(\leftindex^{b}I)^2 = -1$. 
    \item A \textit{logarithmic almost Hermitian structure} on $\overline{X}$ is a pair $(\leftindex^{b}I , \leftindex^{b} {g})$ where $\leftindex^{b}I$ is a logarithmic almost complex structure on $\overline{X}$ and $\leftindex^{b} {g}$ is a logarithmic Riemannian metric on $\overline{X}$ such that $\leftindex^{b}I$ an isometry with respect to $\leftindex^{b}g$.
\end{enumerate}
\end{definition}

Given a logarithmic almost complex structure $\leftindex^{b}I$ on $\overline{X}$ we can always find a logarithmic metric $\leftindex^{b} {g}$ which makes $(\leftindex^{b}I , \leftindex^{b} {g})$ into a logarithmic almost Hermitian structure on $\overline{X}$. In fact, the space of all such $\leftindex^{b} {g}$ is contractible. 

\begin{definition}\label{definition:complexcompactification}  
    Let $(X,I)$ be an almost complex $2n$--manifold and $F : X \to \mathbb{C}$ an $I$--holomorphic function. An \emph{almost complex compactification} of $(X,I,F)$ is a pair $(\overline X, \overline F)$ consisting of a compact manifold with corners $\overline X$ and a smooth function $\overline F \colon \overline X \to \C$ such that
    \begin{enumerate}
        \item $X$ is diffeomorphic to the interior of $\overline X$, 
        \item $\overline F / F$ extends to a boundary function $\tau: \overline{X} \to \mathbb{R}$ in the sense of \autoref{definition:compactification}, 
        \item $I$ extends to a logarithmic almost complex structure on $\overline{X}$. 
    \end{enumerate}
\end{definition}

\begin{remark}
    One should not confuse $\overline F$ with the complex conjugation of $F$, which is never used in this paper. 
\end{remark}

Let $(\overline X, \overline F)$ be an almost complex compactification of $(X,I,F)$. For every $\theta \in \R/2\pi\Z$ define
\[
    f_\theta \coloneq \Re(e^{i\theta}F) \quad\text{and}\quad \overline f_\theta \coloneq \Re(e^{i\theta}\overline F). 
\]
The real pair $(\overline X, \overline f_\theta)$ is a compactification of $(X,f_\theta)$ in the sense of \autoref{definition:compactification}. Since $F$ is $I$--holomorphic,
\[
    \mathrm{Crit}(F) = \mathrm{Crit}(f_\theta) \quad\text{ for all } \theta \in \R/2\pi \Z. 
\]
The following result allows us to control critical points at infinity for all $\theta$. As before, consider the perturbation
\[
    f_{\theta,\varepsilon} \coloneq f_\theta + \frac{\varepsilon}{\tau} = \Re(e^{i\theta} F) + \frac{\varepsilon}{\tau}.
\]




\begin{proposition}\label{proposition:critholo}
    Let $(\overline{X}, \overline{F} )$ be an almost complex compactification of $(X, I , F)$. Suppose that $0$ is either a regular value or an isolated critical value of $| \overline{F}|^2 : \partial \overline{M} \to \mathbb{R}$, in the stratified sense. Then there exists $\varepsilon_0 >0$ such that the function $f_{\theta,\varepsilon}$ has no critical points at infinity for all $\varepsilon$ with $0 < |\varepsilon|\leq \varepsilon_0$ and all $\theta \in \mathbb{R}/2\pi \mathbb{Z}$.
\end{proposition}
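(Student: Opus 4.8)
The plan is to reformulate ``critical point at infinity of $f_{\theta,\varepsilon}$'' using logarithmic calculus on $\overline X$, exploit the $\leftindex^{b}I$-complex linearity of the logarithmic $1$-form $\tau\,dF$ coming from the holomorphy of $F$, and show that for small $\varepsilon\neq 0$ such a point would be a stratified critical point of $|\overline F|^2$ on $\partial\overline X$ with critical value $\varepsilon^2$, contradicting the hypothesis. First I would record that $\overline{f_{\theta,\varepsilon}}=\tau f_{\theta,\varepsilon}=\Re(e^{i\theta}\overline F)+\varepsilon=\overline f_\theta+\varepsilon$, so a critical point at infinity of $f_{\theta,\varepsilon}$ is a point $p$ in some stratum $S\subset\partial\overline X$ with $\overline f_\theta(p)=-\varepsilon$ at which $\overline f_\theta|_S$ is critical. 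As in the proof of \autoref{proposition:compactness}, $\tau\,df_{\theta,\varepsilon}=d\overline{f_{\theta,\varepsilon}}-\overline{f_{\theta,\varepsilon}}\,\tfrac{d\tau}{\tau}$ extends to a logarithmic $1$-form on $\overline X$ whose logarithmic conormal component along $S$ equals $-\overline{f_{\theta,\varepsilon}}$ times that of $d\tau/\tau$, so it vanishes once $\overline{f_{\theta,\varepsilon}}(p)=0$; hence, when $\overline f_\theta(p)=-\varepsilon$, the point $p$ is a critical point at infinity of $f_{\theta,\varepsilon}$ if and only if the logarithmic $1$-form $\tau\,df_{\theta,\varepsilon}=\tau\,df_\theta-\varepsilon\,\tfrac{d\tau}{\tau}$ vanishes at $p$.

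Since $F$ is $I$-holomorphic and $\tau$ is real, $\tau\,dF=\tau\,d(\Re F)+i\,\tau\,d(\Im F)$ is $\leftindex^{b}I$-complex linear as a logarithmic $1$-form; writing $g_\theta=\Im(e^{i\theta}F)$ this gives $\tau\,dg_\theta=-(\tau\,df_\theta)\circ\leftindex^{b}I$ and so $e^{i\theta}\,\tau\,dF=\tau\,df_\theta-i\,(\tau\,df_\theta)\circ\leftindex^{b}I$. Substituting the condition $\tau\,df_{\theta,\varepsilon}(p)=0$ from the previous step, a critical point at infinity $p$ of $f_{\theta,\varepsilon}$ satisfies
\[
 e^{i\theta}\,\tau\,dF(p)=2\varepsilon\,\frac{\partial\tau}{\tau}\Big|_p,\qquad\text{where}\qquad \frac{\partial\tau}{\tau}:=\tfrac12\Big(\frac{d\tau}{\tau}-i\,\frac{d\tau}{\tau}\circ\leftindex^{b}I\Big)
\]
is the $\leftindex^{b}I$-complex linear part of $d\tau/\tau$; it is nowhere vanishing because its real part $\tfrac12\,d\tau/\tau$ is. In particular $\tau\,dF$ and $\partial\tau/\tau$ are complex proportional at $p$, with factor $2\varepsilon e^{-i\theta}$.

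The crux is to combine this with the identity $d\overline F(p)=\tau\,dF(p)+\overline F(p)\,\tfrac{d\tau}{\tau}(p)$. Since $\overline F$ is smooth on $\overline X$, the logarithmic $1$-form $d\overline F$ annihilates the logarithmic normal bundle of every boundary stratum, whereas $d\tau/\tau$ restricts to a nowhere vanishing section of the logarithmic conormal bundle there; comparing logarithmic conormal components, and using that $\leftindex^{b}I$ carries the logarithmic normal directions of $S$ into $\ker(d\tau/\tau)$ (which holds for the real-blow-up compactifications of \autoref{section:algebraic}), one obtains $e^{i\theta}\overline F(p)=-\varepsilon$, that is $\Im(e^{i\theta}\overline F)(p)=0$. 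Hence $|\overline F(p)|^2=\varepsilon^2$, and since $d(|\overline F|^2)=2\,\overline f_\theta\,d\overline f_\theta+2\,\overline g_\theta\,d\overline g_\theta$ with $d\overline f_\theta(p)=0$ (by the first step) and $\overline g_\theta(p)=\Im(e^{i\theta}\overline F)(p)=0$, the point $p$ is a stratified critical point of $|\overline F|^2|_{\partial\overline X}$ with critical value $\varepsilon^2$. By hypothesis there is $\rho>0$ such that $|\overline F|^2|_{\partial\overline X}$ has no stratified critical point whose critical value lies in $(0,\rho)$; taking $\varepsilon_0=\sqrt\rho$ then rules out critical points at infinity of $f_{\theta,\varepsilon}$ for all $0<|\varepsilon|\le\varepsilon_0$ and all $\theta$, since $\varepsilon_0$ does not depend on $\theta$.

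I expect the main obstacle to be the step ``$e^{i\theta}\overline F(p)=-\varepsilon$'': extracting it requires careful bookkeeping of the logarithmic normal/conormal splitting along each boundary stratum and of the interaction of $\leftindex^{b}I$ with the boundary function $\tau$. In a corner chart with $\tau=\prod y_i^{\alpha_i}$ this can be checked directly from the local model for $\leftindex^{b}I$ on a real blow-up; alternatively one can run the argument by a compactness/contradiction scheme, letting $\varepsilon_n\to 0$ with $\varepsilon_n\neq 0$, passing to a subsequence with the corresponding critical points at infinity $p_n$ in a fixed stratum and $p_n\to p_0$, noting that $\tau\,dF(p_0)=0$ forces $\overline F(p_0)=0$, and then deriving the same contradiction with the isolatedness of the critical value $0$ of $|\overline F|^2|_{\partial\overline X}$.
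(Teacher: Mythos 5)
Your argument is correct in substance and follows essentially the same route as the paper: at a putative critical point at infinity $p$ of $f_{\theta,\varepsilon}$ you combine the holomorphy of $F$ with the identity $\tau\,dF = d\overline F - \overline F\,\tfrac{d\tau}{\tau}$ to deduce $\Im(e^{i\theta}\overline F)(p)=0$, hence that $\varepsilon^2$ is a stratified critical value of $|\overline F|^2|_{\partial\overline X}$, which the hypothesis excludes for all $0<|\varepsilon|\leq\varepsilon_0$ uniformly in $\theta$. The difference is one of implementation: the paper contracts the same identity against the $\leftindex^{b}{g}$-dual pair $\nabla(1/\tau)$ and $R=I\nabla(1/\tau)$ of a logarithmic Hermitian structure to obtain \eqref{holo_infinity}, and there the orthogonality $\leftindex^{b}{g}\big(\nabla(1/\tau),I\nabla(1/\tau)\big)=0$ does the job that your chart computation of $(\tfrac{d\tau}{\tau})\circ\leftindex^{b}{I}$ on logarithmic normal directions does for you; the two are dual formulations of the same computation. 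The point you single out as the main obstacle --- that $\leftindex^{b}{I}$ carry logarithmic normal directions into $\ker(d\tau/\tau)$ --- is indeed an extra boundary compatibility between $\leftindex^{b}{I}$ and $\tau$ that is not part of \autoref{definition:complexcompactification}; but the paper's proof rests on the parallel assertion that $\nabla(1/\tau)$ is a section of the logarithmic normal sheaf, which for a general logarithmic Hermitian metric is likewise a nontrivial compatibility with $\tau$, so relative to the paper this is not a gap but the same implicit input made explicit. Your verification of it in the corner charts of \autoref{proposition:SNC} (or the compactness/contradiction variant you sketch) legitimately supplies it in the real blow-up setting of \autoref{section:algebraic}, which is where the proposition is used.
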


\begin{proof}
    We have the identity of logarithmic $1$-forms $ \tau dF = d \overline{F} - \overline{F} d\tau /\tau$. Let $(I , \leftindex^{b} {g} )$ be a logarithmic Hermitian almost complex structure on $\overline{X}$. Let $g= \tau^{-1} \leftindex^{b} {g}$ denote the associated cone metric on $X$. Consider the logarithmic vector fields $\nabla (1/\tau)$ and $R := I \nabla ( 1/\tau )$, where $\nabla ( 1/\tau )$ is the gradient of $1/\tau$ for the metric $g$.  Evaluating the identity at points on $\partial \overline{X}$ against these two vector fields, then using the fact that they are $\leftindex^{b} {g}$--orthogonal, and
    the fact that $d\overline{F} ( \nabla (1/\tau )) = 0$ on $\partial \overline{X}$ because $\nabla (1/\tau )$ is a section of the logarithmic normal sheaf of $\partial \overline{X}$, yields:
    \begin{align*}
    \tau \dF ( \nabla \frac{1}{\tau}) & =  \overline{F} \cdot \leftindex^{b}{| \frac{d\tau}{\tau}|}^{2}\\
    \tau dF ( R) & = ( d \overline{F} )(R).
    \end{align*}
   Since $F$ is holomorphic in the complex structure $I$, it follows that we have the following identity at points of $\partial \overline{X}$:
    \begin{align}
    d\overline{F} (R) = i \overline{F} \cdot \leftindex^{b}{| \frac{d\tau}{\tau}|}^{2} ,\label{holo_infinity}
    \end{align}
    and note that the quantity $\leftindex^{b}{| \frac{d\tau}{\tau}|}^{2}$ is non-vanishing along $\partial \overline{X}$ by \eqref{dtau/tau}.
    
    Now, suppose that $p \in \partial \overline{X}$ is a critical point at infinity of $\mathrm{Re}(e^{-i \theta} F) + \varepsilon/\tau$. We will show that $\varepsilon^2$ is a stratified critical value of $|\overline{F}|^2 : \partial \overline{X} \to \mathbb{R}$. From the hypothesis that if $0$ is a stratified critical value for this function then it is an isolated one, the result will then follow. To show this, we may replace $F$ by $e^{i\theta}F$ so as to suppose that $\theta = 0$. Thus, $p$ is a critical point of $\mathrm{Re}\overline{F}$ along the stratum $S \subset \partial \overline{X}$ containing $p$ and with critical value $- \varepsilon$. By \eqref{holo_infinity} we obtain
    \[
    0 =  (d \mathrm{Re} \overline{F})_p (R) = - i \mathrm{Im}\overline{F} (p) \cdot \leftindex^{b}{| \frac{d\tau}{\tau}|}^{2} 
    \]
    and thus $\mathrm{Im} \overline{F}(p) = 0$. Thus, $|\overline{F}(p) |^2 = \varepsilon^2 $ and along the stratum $S \subset \partial \overline{X}$ containing $p$ we have
    \[
    (d |\overline{F}|^{2} )_p = 2 \mathrm{Re}\overline{F}(p) (d\mathrm{Re}\overline{F})_p + 2 \mathrm{Im}\overline{F}(p) (d\mathrm{Im}\overline{F})_p = 0.
    \]
    Thus $\varepsilon^2$ is a stratified critical value of $|\overline{F}|^2 : \partial \overline{X} \to \mathbb{R}$, and the result follows.
\end{proof}

\subsection{Compactifying complex algebraic varieties}

Let $X$ be a smooth complex algebraic variety, with complex structure denoted $I$. By Nagata's compactification theorem \cite{nagata62,nagata63} and Hironaka's resolution of singularities \cite{hironaka1,hironaka2}, there exist
\begin{enumerate}
    \item a smooth, complete complex algebraic variety $Y$, and \
    \item a simple normal crossing divisor $D = D_1 + \ldots + D_\ell$ in $Y$,
\end{enumerate}
such that $X$ is isomorphic to $Y\setminus D$ as complex algebraic varieties. The completeness condition is equivalent to $Y$ being compact with respect to the analytic topology. Recall also that the simple normal crossing condition means that each irreducible component $D_i$ is smooth and that around every point of $D$ there exist analytic coordinates $(z_1, \ldots, z_m)$ on $Y$, where $m = \dim_\C(Y)$, in which $D = \{ z_1 \cdots z_\ell = 0 \}$. Globally, let $\pi_i \colon L_i \to Y$ be an algebraic line bundle with a section $s_i \in H^0(Y,L_i)$ vanishing transversely along $D_i$, so that $s_i$ defines an isomorphism $L_i \cong \sO_Y(D_i)$. The divisor $D$ is the zero set of 
\[
    s \coloneq \bigotimes_{i=1}^\ell s_i \in H^0(Y,L) \quad\text{where } L \coloneq \bigotimes_{i=1}^\ell L_i.
\]

\begin{definition}
~
\begin{enumerate}
\item
    Let $D = s^{-1}(0)$ be an effective divisor, where $s \in H^0(Y,L)$ is a non-zero section of a line bundle $\pi \colon L \to Y$. The \emph{simple real blowup} of $Y$ along $D$ is the quotient
    \[
        \mathrm{sBl}_D(Y) \coloneq \frac{ \{ v \in L\setminus\{0\} \ |  s(\pi(v)) \in \R_{\geq 0} v \}     }{\R_{>0}}.
    \]
    where $\R_{>0}$ acts on $L$ by rescaling. The map $\pi$ induces a projection $\mathrm{sBl}_D(Y) \to Y$.
    \item The \emph{real blowup} of $Y$ along a simple normal crossing divisor $D = D_1 + \ldots + D_\ell$ is the fiber product
    \[ 
        \mathrm{Bl}_D(Y) \coloneq \mathrm{sBl}_{D_1}(Y) \times_Y \mathrm{sBl}_{D_2}(X)  \times_Y \cdots \times_Y \mathrm{sBl}_{D_\ell}(Y) .
    \]
\end{enumerate}
\end{definition}

\begin{remark}
    When $D$ is smooth, the simple blowup $\mathrm{sBl}_D(Y)$ is a manifold with boundary. In general, $\mathrm{sBl}_D(Y)$ is only a Whitney stratified space. The advantage of working with $\mathrm{Bl}_D(Y)$ is that, as we will see, it is a manifold with corners when $D$ is a simple normal crossing divisor. In fact, $\mathrm{Bl}_D(Y) \to \mathrm{sBl}_D(Y)$ is the canonical resolution of a stratified space by a manifold with corners in the sense of \cite[Section 2]{albin}.
\end{remark}

We will consider the compactification of $X$ given by
\[
    \overline X \coloneq \mathrm{Bl}_D(Y). 
\]
As we will see, $\overline X$ is a compact manifold with corners, containing $X$ as the smooth stratum. To define a compactification of the \emph{pair} $(X,F)$, where $F :X \to \mathbb{C}$ is a regular function, we will construct a boundary function $\tau \colon \overline X \to [0,1]$ such that $\overline F = \tau F$ extends to a smooth function on $\overline X$. 

First, observe that $F$ extends to a rational function on $Y$. Denote by $\mathrm{ord}(F,D_i)$ the order of $F$ along $D_i$, so that $\mathrm{ord}(F,D_i) < 0$ corresponds to $F$ having a pole along $D_i$ and $\mathrm{ord}(F,D_i) > 0$ corresponds to $F$ vanishing along $D_i$. Let $\alpha = (\alpha_1, \ldots, \alpha_\ell)$ be any $\ell$--tuple of positive integers satisfying
\begin{equation}
    \label{divisor weights}
    \alpha_i \geq \mathrm{max}(-\mathrm{ord}(F,D_i ) , 1 ).
\end{equation}
Set
\[
    L^\alpha = \bigotimes_{i=1}^\ell L_i^{\alpha_i}  \quad\text{and}\quad s^\alpha = \bigotimes_{i=1}^\ell s_i^{\alpha_i} \in H^0(Y, L^\alpha).  
\]
The significance of this construction is that  $F: X \to \mathbb{C}$ can be expressed as
\begin{equation}
    \label{F as a quotient}
    F = \frac{t}{s^\alpha} 
\end{equation}
for a section $t \in H^0 (Y, L^\alpha )$. Thus, to extend $F$ to a smooth function on $\overline X$ we need to multiply it by a real function which vanishes at the same rate as $s^\alpha$ as we approach each $D_i$. A natural candidate is the norm of $s^\alpha$ with respect to a Hermitian metric on $L^\alpha$, but this has to be done with some care in order to preserve the algebraic nature of the compactification.

Ideally, we would use a real algebraic Hermitian metric, but we do not know if every algebraic line bundle over a smooth complex algebraic variety admits such an inner product (for the projective case, see \autoref{remark:metricprojective} below). However, since $Y$ is a complex algebraic variety, it is naturally a \emph{$C^k$ Nash manifold}, that is: a manifold covered by charts homeomorphic to $C^k$ semialgebraic submanifolds of $\R^N$ with $C^k$ Nash transition functions. Similarly, $L_i \to Y$ is a \emph{$C^k$ Nash vector bundle}. 

\begin{lemma}
    Every $C^k$ Nash complex line bundle over a $C^k$ Nash manifold admits a $C^k$ Nash Hermitian metric. 
\end{lemma}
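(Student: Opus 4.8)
The plan is to construct the metric by the usual partition-of-unity argument, carried out entirely within the $C^k$ Nash category. Let $\pi \colon L \to Y$ be a $C^k$ Nash complex line bundle. First I would choose a finite open cover $\{U_\alpha\}_{\alpha\in A}$ of $Y$ by $C^k$ Nash charts over which $L$ admits $C^k$ Nash trivializing frames $e_\alpha \in \Gamma(U_\alpha, L)$; the resulting transition functions $g_{\beta\alpha}\colon U_\alpha\cap U_\beta \to \C^\times$, defined by $e_\beta = g_{\beta\alpha}\,e_\alpha$, are then nowhere-vanishing $C^k$ Nash functions. In the situation of interest $Y$ is compact (being a complete complex variety in the analytic topology), so such a finite cover exists trivially; in general it is provided by the structure theory of affine Nash manifolds, see \cite{bochnak}.

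The one genuinely nontrivial input, and the step I would be most careful about, is the existence of a $C^k$ Nash partition of unity $\{\rho_\alpha\}_{\alpha\in A}$ subordinate to $\{U_\alpha\}$: each $\rho_\alpha\colon Y\to[0,1]$ is $C^k$ Nash, $\mathrm{supp}\,\rho_\alpha\subset U_\alpha$, and $\sum_{\alpha}\rho_\alpha\equiv 1$. Unlike in the smooth category, Nash functions cannot be cut off arbitrarily, so this is not a routine bump-function construction but the Nash partition-of-unity theorem for finite open covers of an (affine) Nash manifold; I would quote this from \cite{bochnak} (see also \cite{shiota}). On $U_\alpha$ the frame $e_\alpha$ determines the Hermitian metric $h_\alpha$ on $L|_{U_\alpha}$ with $h_\alpha(e_\alpha,e_\alpha)=1$, which is visibly $C^k$ Nash.

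Finally I would set $h \coloneq \sum_{\alpha\in A}\rho_\alpha h_\alpha$, viewed as a section of the real $C^k$ Nash line bundle $\mathrm{Herm}(L)$ of Hermitian forms on $L$, with each $\rho_\alpha h_\alpha$ extended by zero outside $\mathrm{supp}\,\rho_\alpha$. Being a finite sum of $C^k$ Nash sections, $h$ is $C^k$ Nash. For positivity, fix $y\in Y$ and a local frame $e_\beta$ at $y$; using $e_\beta = g_{\beta\alpha}e_\alpha$ on overlaps one computes $h(e_\beta,e_\beta)(y) = \sum_{\alpha:\,\rho_\alpha(y)>0}\rho_\alpha(y)\,|g_{\beta\alpha}(y)|^2$ (each term well defined since $\rho_\alpha(y)>0$ forces $y\in U_\alpha\cap U_\beta$), a sum of nonnegative numbers with at least one strictly positive because $\sum_\alpha\rho_\alpha(y)=1$. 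Hence $h$ is positive-definite on every fiber and is therefore a $C^k$ Nash Hermitian metric on $L$. Apart from invoking the Nash partition-of-unity theorem, every step is formal.
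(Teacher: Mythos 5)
Your argument is correct in the setting the paper actually needs ($C^k$ Nash meaning $C^k$ and semialgebraic with $k$ finite), but it takes a genuinely different route from the paper. The paper does not glue local metrics at all: it invokes the Nash--Shiota embedding theorem to replace $Y$, $L$, and the bundle $H \to Y$ of Hermitian pairings by affine $C^k$ Nash manifolds, takes an arbitrary $C^k$ Hermitian metric $h$ (a section of $H$), and then uses Shiota's approximation theorem to replace $h$ by a nearby $C^k$ Nash section $\tilde h$ of $H$, positivity being an open condition. Your route trades the approximation theorem for the existence of $C^k$ semialgebraic partitions of unity subordinate to a finite trivializing cover, with positivity then automatic from convexity of the cone of positive Hermitian forms; this is more elementary and self-contained once that input is granted. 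Two caveats on that input. First, be careful with terminology and the citation: in \cite{bochnak} a ``Nash function'' is $C^\infty$ semialgebraic, hence real analytic, and for such functions no partition of unity with $\mathrm{supp}\,\rho_\alpha \subset U_\alpha$ exists on a connected manifold unless some $U_\alpha$ is everything; the statement you need is the existence of semialgebraic $C^r$ partitions of unity for \emph{finite} $r$ (available in Shiota's framework), and your proof genuinely does not extend to $k = \infty$, whereas the paper's approximation argument is insensitive to this issue. Second, your construction also silently uses that the finite cover can be chosen with semialgebraic trivializing open sets and Nash frames, which is part of the definition of a $C^k$ Nash bundle (plus, in general, the affine/semialgebraic structure theory to get finiteness when $Y$ is not compact), so it is fine but worth stating. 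With the partition-of-unity reference corrected to the finite-differentiability semialgebraic statement, your proof is complete.
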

\begin{proof}
    Let $L \to Y$ be a $C^k$ Nash complex line bundle. A generalization of the Nash embedding theorem asserts that every $C^k$ Nash manifolds is isomorphic to an affine one; in fact, to a smooth  affine real algebraic variety \cite[Theorem III.1.1]{shiota}. Thus, we may assume that both $L$ and $Y$ are $C^k$ affine Nash manifolds, and so is the bundle $H \to Y$ of Hermitian pairings $L \otimes_\R L \to \C$. Let $h \colon Y \to H$ be a $C^k$ section corresponding to a Hermitian metric on $L_i$.  In the affine setting, any $C^k$ section can be approximated in the $C^k$ topology by a $C^k$ Nash section \cite[Remark III.2.5]{shiota}. Therefore, there exists a $C^k$ Nash section $\tilde h$ of $H$ which is close to $h$, and so non-degenerate. 
\end{proof}

\begin{remark}\label{remark:metricprojective}
If $Y$ is \textit{projective}, then  $L_i \to Y$ admits a real algebraic Hermitian metric. Indeed, fixing a projective embedding $e: Y \hookrightarrow \mathbb{P}^N$ yields a canonical line bundle $\mathscr{O}_Y (1) = e^\ast \mathscr{O}_{\mathbb{P}^N } (1)$, and define also $\mathscr{O}_Y (n) := \mathscr{O}_Y (1)^{\otimes n }$ for $n \in \mathbb{Z}$. The line bundle $\mathscr{O}_{\mathbb{P}^N} (-1)$ embeds into the trivial bundle of rank $N+1$, hence inherits a real algebraic Hermitian metric by restriction; hence all line bundles $\mathscr{O}_Y (n)$ carry such a metric. By \cite[Corollary 5.18]{hartshorne}, any complex algebraic vector bundle over $Y$ can be embedded as a complex algebraic subbundle of a sum of line bundles of the form $\mathscr{O}_Y (n)$, which yields a real algebraic Hermitian metric.
\end{remark}

Assume from now on that each $L_i$ is equipped with a $C^k$ Nash Hermitian metric, denoted by $\langle \cdot, \cdot \rangle_i$, and define  $\tau \colon \overline M \to [0,\infty)$ by
\begin{equation}
    \label{tau definition}
    \tau(v) \coloneq \prod_{i=1}^\ell \langle s_i(\pi_i(v)), v \rangle_i^{\alpha_i}.
\end{equation}
Finally, set
\[
    \overline F \coloneq \tau F.
\]

\begin{proposition}\label{proposition:SNC}
    The pair $(\overline X, \overline F)$ is a locally semialgebraic compactification of $(X,F)$ in the sense of \autoref{definition:semialgebraic}, and an almost complex compactification in the sense of \autoref{definition:complexcompactification}.
\end{proposition}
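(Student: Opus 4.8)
The plan is to reduce every clause of \autoref{definition:complexcompactification} and \autoref{definition:semialgebraic} to an explicit local model of the real blow-up near $D$. Fix $p \in D$ lying on exactly the components $D_1,\dots,D_{\ell'}$, choose holomorphic coordinates $(z_1,\dots,z_m)$ on a neighbourhood $U$ of $p$ in $Y$ with $D_i\cap U = \{z_i=0\}$ for $i\le\ell'$, and choose trivialisations of the $L_i$ over $U$ in which $s_i = z_i$ for $i\le\ell'$ and $s_i = 1$ for $i>\ell'$; the Hermitian metrics then become positive functions $h_i$ on $U$. In these coordinates the simple real blow-up along $D_i$ replaces the $z_i$-disk by $[0,\varepsilon)\times S^1$ via $z_i = r_i e^{i\theta_i}$, so $\mathrm{Bl}_D(Y)$ over $U$ carries corner coordinates $(r_1,\theta_1,\dots,r_{\ell'},\theta_{\ell'},z_{\ell'+1},\dots,z_m)$ with $r_i\ge 0$, $\theta_i\in\R/2\pi\Z$ and blow-down $z_i = r_i e^{i\theta_i}$. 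I would first record the standard facts that $\overline X\to Y$ is proper with torus fibres, so $\overline X$ is compact; that these charts near $\partial\overline X$, together with the charts of $X$, form a smooth atlas with corners whose transition maps are smooth (a coordinate change $z_i' = z_i u_i(z)$, $z_j' = z_j'(z)$ lifts to $r_i' = r_i|u_i|$, $\theta_i' = \theta_i + \arg u_i$, which is smooth on the blow-up because $z_i = r_i e^{i\theta_i}$ is); and that the interior of $\overline X$ is canonically identified with $X$. This settles \autoref{definition:complexcompactification}(1).

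Next I would verify that $\tau$ and $\overline F = \tau F$ extend smoothly, which is the conceptual heart of the construction. Using the unit representatives, in the chart above $\tau = \bigl(\prod_i h_i^{\alpha_i/2}\bigr)\prod_{i\le\ell'}|z_i|^{\alpha_i} = H(z)\, r_1^{\alpha_1}\cdots r_{\ell'}^{\alpha_{\ell'}}$ with $H>0$ smooth, so $\tau$ extends smoothly to $\overline X$ and vanishes to order $\alpha_i$ along $\{r_i=0\}$; after the coordinate change $r_i\mapsto H^{1/(\alpha_1+\cdots+\alpha_{\ell'})}r_i$ it takes the normal form $\prod y_i^{\alpha_i}$ of \autoref{definition:compactification}, and a global rescaling makes it $[0,1]$-valued, so it is a boundary function. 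Since $\alpha_i\ge -\mathrm{ord}(F,D_i)$, the section $t = s^\alpha F$ is regular, represented in the trivialisation by a holomorphic function $\phi(z)$, and
\[
    \overline F = \tau F = H(z)\,\phi(z)\prod_{i\le\ell'}\frac{|z_i|^{\alpha_i}}{z_i^{\alpha_i}},
\]
in which $|z_i|^{\alpha_i}/z_i^{\alpha_i} = e^{-i\alpha_i\theta_i}$ is a smooth $S^1$-valued function of $\theta_i$; hence $\overline F$ is smooth in the blow-up coordinates, which is exactly where blowing up is used to cancel the poles of $F$. As $\overline F/F = \tau$ by construction, $(\overline X,\overline F)$ is a compactification in the sense of \autoref{definition:compactification} and of \autoref{definition:complexcompactification}(2), and likewise $(\overline X,\overline f_\theta)$ with $\overline f_\theta = \Re(e^{i\theta}\overline F) = \tau f_\theta$ is a real compactification of $(X,f_\theta)$ for every $\theta$. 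For \autoref{definition:complexcompactification}(3) I would compute $I$ in the $b$-frame $\{r_i\partial_{r_i},\partial_{\theta_i}\}_{i\le\ell'}\cup\{\partial_{x_j},\partial_{y_j}\}_{j>\ell'}$ of $\leftindex^{b}{T}\overline X$: on $X$ one has $I(r_i\partial_{r_i}) = \partial_{\theta_i}$, $I\partial_{\theta_i} = -r_i\partial_{r_i}$, and $I$ is the standard rotation on the $(x_j,y_j)$-planes, so $I$ has constant matrix in this frame and extends to a smooth endomorphism $\leftindex^{b}{I}$ of $\leftindex^{b}{T}\overline X$ with $(\leftindex^{b}{I})^2 = -1$; it is globally defined, being the canonical extension of the $I$ on $X$. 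This gives the almost complex compactification claim.

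For the locally semialgebraic statement I would work throughout with Nash data: $Y$ is a $C^k$ Nash manifold, the $D_i$ are smooth Nash hypersurfaces, the $L_i$ are Nash line bundles with $C^k$ Nash Hermitian metrics, and $t,\,s^\alpha$ are algebraic sections. By the Nash implicit function theorem \cite{shiota,bochnak}, the coordinates $(z_1,\dots,z_m)$ and trivialisations above may be taken Nash (with $z_i$ the Nash function representing $s_i$ for $i\le\ell'$). The boundary chart over $U$ then embeds into $\R^{2m+\ell'}$ via $(r_i,\cos\theta_i,\sin\theta_i)_{i\le\ell'}$ together with $(\Re z_j,\Im z_j)_{j>\ell'}$, with semialgebraic image $\{r_i\ge 0,\ \cos^2\theta_i+\sin^2\theta_i = 1,\ \dots\}$ and polynomial blow-down $z_i = r_i(\cos\theta_i + i\sin\theta_i)$; in these coordinates $\tau$, $F = \phi(z)/\prod_{i\le\ell'}z_i^{\alpha_i}$ on the interior part, and $\overline F = H(z)\phi(z)\prod_{i\le\ell'}(\cos\theta_i - i\sin\theta_i)^{\alpha_i}$ are obtained from Nash functions by products, composition with the polynomial blow-down, division by quantities nonvanishing where required, and---if some $\alpha_i$ is odd---square roots of positive Nash functions, all of which preserve the Nash class, so $F$ and $\overline F$ are $C^k$ Nash. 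For the points of $X$ one uses finitely many affine charts of $Y$ lying inside $X$, on which $F$ is regular and $\tau$ is a positive Nash function, so $\overline F = \tau F$ is Nash there too. Together these charts cover the compact $\overline X$ and verify \autoref{definition:semialgebraic}.

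The step I expect to be the main obstacle is this last one: keeping the whole construction inside the Nash category. It requires a Nash---not merely smooth---straightening of the normal crossing divisor and Nash trivialisations of the $L_i$, some care with the norms $|s_i|_i^{\alpha_i}$, which introduce square roots when $\alpha_i$ is odd (so one invokes closure of Nash functions under square roots of non-negative functions, or simply arranges the $\alpha_i$ to be even), and the observation that after real blow-up the transcendental-looking factor $|z_i|/z_i$ becomes the polynomial $\cos\theta_i - i\sin\theta_i$ in semialgebraic coordinates. By contrast, the smoothness of $\overline F$---though it is the point of the whole paper---is essentially a one-line computation once the blow-up coordinates are set up.
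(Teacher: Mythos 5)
Your proposal is correct, and for the differential-geometric half it is essentially the paper's argument: your polar-coordinate corner charts $(r_i,\theta_i,z_{>\ell'})$ are the same as the paper's charts $\phi_U(z,w)=(z_1w_1^{-1},\ldots,z_kw_k^{-1},w_1,\ldots,w_k,z_{k+1},\ldots,z_m)$, and the computations showing that $\tau=H\prod r_i^{\alpha_i}$ is a boundary function, that $\overline F=H\phi\prod e^{-i\alpha_i\theta_i}$ is smooth (the condition $\alpha_i\ge-\mathrm{ord}(F,D_i)$ entering through regularity of $t=s^\alpha F$), and that $I$ has constant matrix in the $b$-frame $\{r_i\partial_{r_i},\partial_{\theta_i},\partial_{a_j},\partial_{b_j}\}$ are exactly those in the paper; you are in fact slightly more explicit about normalizing $\tau$ to the form $\prod y_i^{\alpha_i}$ and about compactness. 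Where you genuinely diverge is the locally semialgebraic verification, which you rightly flag as the delicate point. You propose to Nash-ify the local model itself: Nash straightening coordinates for the normal crossing divisor and Nash trivializations of the $L_i$ via the Nash inverse/implicit function theorem, and then explicit semialgebraic corner charts using $(r_i,\cos\theta_i,\sin\theta_i)$, in which $\tau$, $F$ and $\overline F$ are checked to be Nash (modulo the square roots $h_i^{\alpha_i/2}$, which you correctly note stay Nash). This works—regular functions on a Zariski-affine trivializing neighbourhood give étale, hence locally Nash, coordinates with $z_i=s_i$—but it is precisely the step the paper arranges to avoid: the paper verifies semialgebraicity and Nashness of $F$ and $\tau$ directly on the tautological description of the blow-up over a Zariski-affine open $A$ with algebraically trivialized $L_i$, namely $\overline X\cap\pi^{-1}(A)\cong\{(x,v)\in A\times\C^\ell:\langle s_i(x),v_i\rangle_i\ge 0\}$ together with the global formulas \eqref{F as a quotient} and \eqref{tau definition}, and then observes that the corner-structure, boundary-function, smooth-extension and logarithmic-$I$ claims are invariant under $C^k$ diffeomorphisms, so they may be checked in arbitrary analytic coordinates that need not be semialgebraic. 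The paper's separation of concerns buys economy (no Nash coordinate straightening is ever needed); your route buys a single system of fully semialgebraic corner charts in which every clause of \autoref{definition:semialgebraic} and \autoref{definition:complexcompactification} is visible at once, at the cost of the extra (but standard and correctly cited) real-algebraic input.
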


\begin{remark}
An explicit conical metric on $X$ (for the compactification $\overline{X}$ above described) and compatible with the complex structure $I$ can be obtained as follows: choosing any Riemannian metric $g_Y$ on $Y$ compatible with the complex structure, set
\[
g := \frac{1}{\tau}\Big(  \big(\frac{d\tau}{\tau}\big)^2 + \big( \frac{d\tau\circ I}{\tau}\big)^2 + g_Y \Big).
\]
Moreover, $g$ is Kähler if $g_Y$ is Kähler.
\end{remark}

\begin{proof}
    First, we will show that every point in $\overline X$ has a neighborhood homeomorphic to a semialgebraic set. Let $A \subset Y$ be a Zariski open subset isomorphic to a smooth affine complex variety $A \subset \C^N$ such that the restriction of each $L_i$ to $A$ is algebraically isomorphic to $A \times \C$. Therefore, the intersection of $\overline X$ with $\pi^{-1}(A)$ is homeomorphic to
    \[
        \overline X \cap \pi^{-1}(A) \cong \{ (x,v_1,\ldots, v_\ell) \in A \times \C^\ell \ | \ \langle s_i(x), v_i \rangle_i \geq 0 \text{ for } i=1,\ldots,\ell\},
    \]
    where $s_i \colon A \to \C$ is a real algebraic function and $\langle \cdot, \cdot \rangle_i$ is the chosen $C^k$ Nash Hermitian metric on $L_i$. It follows that $\overline X \cap \pi^{-1}(A)$ is homeomorphic to a semialgebraic set, and under this homeomorphism, $F$ is a $C^\infty$ Nash function on $X \cap \pi^{-1}(A)$ by \eqref{F as a quotient}, and $\tau$ is a $C^k$ Nash function on $\overline X \cap \pi^{-1}(A)$ by \eqref{tau definition}. 

    Next, we show that $\overline X \cap \pi^{-1}(A)$---identified with the semialgebraic set above---is a real $C^k$ submanifold with corners of $\C^{N+\ell}$, that $\tau$ is a boundary function, and $\overline{F} = \tau F$ extends to a $C^k$ function on $\overline X \cap \pi^{-1}(A)$. Note that all of these properties are invariant under $C^k$ diffeomorphisms, so at this point we may use any $C^k$ charts, not necessarily semialgebraic. The charts we will construct will automatically endow $\overline X$ with the structure of a manifold with corners such that the restriction of $\pi$,
    \[
        \pi \colon \overline X \setminus \pi^{-1}(D) \to Y\setminus D \cong X
    \]
    is a diffeomorphism, and that the complex structure on $M$ extends to a logarithmic complex structure on $\overline{X}$. 
 
    Let $p \in A$ be a point where $k \leq \ell$ divisors intersect (and $k \leq m = \mathrm{dim}_\mathbb{C} Y$ by the simple normal crossing condition); without loss of generality, $D_1, \ldots, D_k$. Let $U$ be a small open neighborhood of $p$ in the analytic topology, and let $z_1, \ldots, z_m$ be holomorphic coordinates on $U$ centered at $p$ and such that $D_i = \{ z_i = 0 \}$. After picking smooth Hermitian metrics and local trivializations of $L_i$ over $U$< 
    \[
        \pi^{-1}(U) = \{ (z,w) \in \C^m \times (S^1)^k \ | \ z_i w_i^{-1} \geq 0 \text{ for } i =1,\ldots, k\}.
    \]
    Define 
    \begin{gather*}
        \phi_U \colon \pi^{-1}(U) \to \R^k_{\geq 0} \times (S^1)^k \times \C^{m-k} \\
        \phi_U(z,w) = (z_1 w_1^{-1}, \ldots, z_k w_k^{-1}, w_1, \ldots, w_k, z_{k+1}, \ldots, z_m).
    \end{gather*}
    The collection of corner charts $(U,\phi_U)$ defines a corner atlas on $\overline X$.

    To check that $\overline f$ and $\tau = \overline f / f$ extend to $\overline X$, let $z = (z_1,\ldots, z_m)$ be local holomorphic coordinates on $X$. Set $u = (z_{k+1}, \ldots, z_m)$ and let $(r,w,u)$ be coordinates on $\R^k_{\geq 0} \times (S^1)^k \times \C^{m-k}$. We have $z(r,w,u) = (r_1w_1, \ldots, r_k w_k, u)$.  
    In these coordinates, sections $s_i$ and $s^\alpha$ introduced before \eqref{F as a quotient} are given by
    \[
        s_i(z) = z_i \quad\text{and}\quad s^\alpha(z) =  \psi (z) \prod_{i=1}^k z_i^{\alpha_i},
    \]
    where $\psi (z)$ is a nowhere vanishing holomorphic function. Similarly, section $t$ defined by \eqref{F as a quotient} is identified with a holomorphic function of $z$. We have
    \[
        \tau(r,w,u) = \varphi(z) \prod_{i=1}^k r_i^{\alpha_i}. 
    \]
    where $\varphi$ is a nowhere vanishing smooth function. Therefore,
    \[
        \overline F(z) = \tau(z) F(z) =  \frac{\varphi(z) t(z)}{\psi(z)}\prod_{i=1}^k w_i^{-\alpha_i}.
    \]
    Therefore, $\tau$ is a boundary function on $\overline X$ and $\overline F$ extends to a smooth function on $\overline X$. 
    
    Let $I$ be the given complex structure on $X$, regarded as an endomorphism of $TX$ with $I^2 = - 1$. Consider the coordinates $(r, w , u )$ on the corner chart $\mathbb{R}_{\geq 0}^k \times (S^1 )^k \times \mathbb{C}^{m-k}$ of $\overline{X}$ from the previous paragraph, and write $u_j = a_j +i b_j $ for real coordinates $a_j, b_j$. Then in $X = \mathrm{Int}\overline{X}$ we have
    \[
    I \left( \frac{\partial}{\partial w_i } \right) = - r_i \frac{\partial}{\partial r_i } \quad , \quad I \left( \frac{\partial}{\partial a_j} \right) = \frac{\partial}{\partial b_j}.
    \]
    From this, it follows immediately that $I$ extends to a logarithmic complex structure on $\overline{X}$. The proof is now complete.
\end{proof}

We can now prove the main theorem of this article.

\begin{proof}[{\bfseries Proof of \autoref{theorem:main1}}]
    Let $X \colon F \to \C$ be a regular function on a complex algebraic variety. Let $(\overline X, \overline F)$ be the compactification constructed above, for any choice of $\alpha = (\alpha_1,\ldots,\alpha_\ell)$ satisfying condition \eqref{divisor weights}. By \autoref{proposition:SNC}, $(\overline X, \overline F)$ is a locally semialgebraic compactification, and an almost complex compactification. Let $f = \Re(F)$ and consider the finite action Morse homology 
    \[
        HM_*(X,F) \coloneq HM_*(f,g)
    \]
    introduced in \autoref{definition:finite action morse homology}, for any choice of a conical metric $g$ on $X$ as in \autoref{definition: parameters}. By \autoref{theorem:main_real}, this group is well-defined and canonically isomorphic to relative singular homology
    \[
        HM_*(X,F) \cong H_*(X, \Re(F) < -\lambda) \quad\text{for } \lambda \gg 0. \qedhere
    \]
\end{proof}

\subsection{Remarks on monodromy}
In the situation of \autoref{theorem:main1}, it is natural to ask whether we can assign a \textit{monodromy} isomorphism $HM_*(X,F) \to HM_*(X,F)$ to the loop of functions $\mathrm{Re}(e^{-i\theta}F)$, $\theta \in S^1$, by using a continuation map associated to the functions
\[
f_{\theta,\varepsilon} := \mathrm{Re}(e^{-i\theta}F) + \varepsilon/\tau .
\]
A first step towards this is \autoref{proposition:critholo} which asserts that flr $0 < \varepsilon \ll1$ the functions $f_{\theta, \varepsilon}$ have no critical points at infinity for all $\theta \in S^1$. However, we do not know if for each $\theta$ the constants $\lambda (\theta )$ such that $\mathcal{C}(f_\theta ) \in (-\lambda(\theta) , \lambda(\theta) )$ (as in \autoref{def:finite-type}) have a uniform upper bound independent of $\theta$. If they did, then the continuation map of $f_{\theta, \varepsilon}$ is defined, giving an isomorphism
\begin{align}
m : HM_\ast (X, F) \xrightarrow{\cong} HM_\ast (X, F) \label{monodromy}
\end{align}
and the finite action Morse homology groups $HM_\ast (X, e^{-i\theta}F )$ thus form a local system over $S^1$. We conjecture this is the case:

\begin{conjecture}\label{conjecture}
Let $F : X \to \mathbb{C}$ be a regular function on a complex algebraic variety $X$. If $\tau : \overline{X} \to \mathbb{R}$ denotes a boundary function associated to a simple normal crossings compactification of $X$, as above constructed, then there exists $\lambda  = \lambda(F)>0$ such that: if $x_n \in X$, $\varepsilon_n \in [0,1]$ and $\theta_n \in S^1$ are sequences with $f_{\theta_n , \varepsilon_n} (x_n ) \to c$, $\theta_n \to \theta_0$ and $\varepsilon \to 0$, then $c \in  (-\lambda , \lambda )$.
\end{conjecture}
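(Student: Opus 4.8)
We read the statement as asserting the bound for sequences $x_n$ of \emph{critical points} of $f_{\theta_n,\varepsilon_n}$ (without this hypothesis it is vacuous), and the plan is to argue by contradiction, localising the ``escaping'' critical points near the boundary and reducing to a uniform rate estimate that exploits the holomorphy of $F$.

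\textbf{Reduction to a single escaping sequence.} Since $\varepsilon_n=0$ forces $x_n\in\Crit(F)$ and hence $|f_{\theta_n}(x_n)|\le\max_{z\in C(F)}|z|$, we may assume $\varepsilon_n>0$. If the conclusion fails, then compactness of $S^1$ and a diagonal argument produce a sequence of critical points $x_k\in X$ of $f_{\theta_k,\varepsilon_k}$ with $\varepsilon_k\downarrow 0$, $\theta_k\to\theta_*$ and $|v_k|\to\infty$, where $v_k:=f_{\theta_k,\varepsilon_k}(x_k)$. Passing to a subsequence, $x_k\to x_*\in\overline X$; here $x_*\in X$ is impossible, since then $\tau(x_*)>0$ and $v_k=(\overline f_{\theta_k}(x_k)+\varepsilon_k)/\tau(x_k)$ would converge, so $x_*\in\partial\overline X$.

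\textbf{Localisation at infinity.} Differentiating $\tau f_{\theta,\varepsilon}=\overline f_{\theta}+\varepsilon$ at a critical point gives $(d\overline f_{\theta_k})_{x_k}=(\overline f_{\theta_k}(x_k)+\varepsilon_k)\,(d\tau/\tau)_{x_k}$ in $\leftindex^{b}{T}^\ast\overline X$; comparing logarithmic conormal components in the limit — that of $d\overline f_{\theta_*}$ vanishes on $\partial\overline X$, that of $d\tau/\tau$ does not, by \eqref{dtau/tau} — forces $\overline f_{\theta_*}(x_*)=0$ and $(d\overline f_{\theta_*})_{x_*}=0$, so $x_*$ is a critical point at infinity of $f_{\theta_*}$ in the sense of \autoref{definition:compactification}. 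Repeating the computation in the proof of \autoref{proposition:critholo} — pairing $\tau dF=d\overline F-\overline F\,d\tau/\tau$ at $x_k$ against the logarithmic vector fields $\nabla(1/\tau)$ and $R=I\nabla(1/\tau)$, which are $\leftindex^{b}{g}$--orthogonal, and using that $F$ is $I$--holomorphic — yields the sharper identity
\[
    d\overline F\bigl(\nabla(1/\tau)\bigr)_{x_k}
    = -\bigl(\overline F(x_k)+\varepsilon_k e^{i\theta_k}\bigr)\,
    \leftindex^{b}{| \frac{d\tau}{\tau} |}^{2}_{x_k} ,
\]
whose limit (using $d\overline F(\nabla(1/\tau))|_{\partial\overline X}=0$ and $\leftindex^{b}{| \frac{d\tau}{\tau} |}^{2}\neq 0$ on $\partial\overline X$) gives $\overline F(x_*)=0$. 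Thus the escaping critical points accumulate only near the compact set of zeros of $\overline F$ on $\partial\overline X$ that are stratified critical points of $\overline f_{\theta_*}$. Moreover, since $\cos\theta,\sin\theta$ are semialgebraic on $S^1$ and the compactification is locally semialgebraic, the critical value locus $\{(\theta,\varepsilon,v):v=f_{\theta,\varepsilon}(x)\text{ for some }x\text{ with }(df_{\theta,\varepsilon})_x=0\}\subset S^1\times(0,1]\times\R$ is semialgebraic with fibres over $(\theta,\varepsilon)$ finite of cardinality $\le m$ (by the complexity bound in the proof of \autoref{prop:finitetype}); by a cylindrical algebraic decomposition it is, over each cell of a fixed finite decomposition of $S^1\times(0,1]$, a disjoint union of graphs of continuous semialgebraic functions $v=\xi_j(\theta,\varepsilon)$. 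Hence the conjecture is equivalent to: each $\xi_j$ remains bounded as $\varepsilon\to0$ — a finite local question at the finitely many accumulation points $x_*$.

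\textbf{The uniform rate estimate (the main obstacle).} Near $x_*$, \autoref{proposition:SNC} provides corner coordinates $(r,w,u)\in\R^k_{\ge0}\times(S^1)^k\times\C^{m-k}$ with $z_i=r_iw_i$ in which $F=G(z)\prod_{i\le k}z_i^{-\alpha_i}$ with $G$ holomorphic and $G(0,u_*)=0$, while $\tau$ agrees with $\prod_{i\le k}|z_i|^{\alpha_i}$ up to a smooth positive factor; consequently $|\overline F|$ agrees with $|G|$ up to such a factor and $|v_k|\lesssim(|G(z(x_k))|+\varepsilon_k)/\prod_{i\le k}|z_i(x_k)|^{\alpha_i}$. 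The critical point equation $\Re(e^{i\theta}dF)_x=-\varepsilon\,d(1/\tau)_x$, combined with the holomorphy of $F$ as in the computation above but now applied in all directions, translates into the constraints $z_i\partial_{z_i}G-\alpha_iG=O(\varepsilon)$ for $i\le k$ and $\partial_{u_j}G=O(\varepsilon)$ along the critical set; when $\varepsilon=0$ these force $G=c_*\prod_{i\le k}z_i^{\alpha_i}$ with $c_*\in C(F)$, so $v=\Re(e^{i\theta}c_*)$ is bounded. The crux is to show that the $O(\varepsilon)$ perturbation cannot destroy this. Applying the curve selection lemma to the (semialgebraic) critical set of the family, one may write $x_k=x(t_k)$, $\theta_k=\theta(t_k)$, $\varepsilon_k=\varepsilon(t_k)$ along Puiseux arcs with $t_k\downarrow0$, so that $|z_i(t)|\sim a_it^{p_i}$, $\overline f_{\theta(t)}(x(t))\sim b\,t^{q}$, $\varepsilon(t)\sim e\,t^{s}$ with rational $p_i>0$ and
\[
    v(t)=\frac{\overline f_{\theta(t)}(x(t))+\varepsilon(t)}{\tau(x(t))}
    \sim\frac{b\,t^{q}+e\,t^{s}}{a\,t^{\sum_i\alpha_i p_i}} ;
\]
one must then deduce from the holomorphic constraints that $q\ge\sum_i\alpha_ip_i$ and $s\ge\sum_i\alpha_ip_i$, i.e. $v(t)=O(1)$, with constants independent of $\theta$ (which is automatic given the finite semialgebraic sheet description above, once the estimate holds in one corner chart). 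Establishing this {\L}ojasiewicz--type comparison of the vanishing rates of $G$ (equivalently $\overline F$), of the critical value of $\varepsilon$, and of $\tau$ along critical arcs is the step I expect to be hardest, and is the reason the statement remains conjectural.
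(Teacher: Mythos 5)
There is no proof in the paper to compare against: the statement is precisely \autoref{conjecture}, which the authors explicitly leave open, noting only that it holds in the special case where the compactifying divisor is smooth and one may take $\alpha_1=1$, via \autoref{lemma:bounded}. Your proposal does not close it either, and you say so yourself: everything is funnelled into the final comparison of vanishing rates ($q\ge\sum_i\alpha_i p_i$ and $s\ge\sum_i\alpha_i p_i$ along Puiseux arcs in the parametrized critical set), and that uniform-in-$\theta$ {\L}ojasiewicz-type estimate is exactly the mathematical content of the conjecture, not a technical step that can be deferred. As written, the proposal is a plan of attack with the decisive step missing, so it cannot be accepted as a proof.

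The preparatory reductions are sound but essentially repackage what the paper already establishes, which is why they do not get you past the crux. The localisation at infinity (escaping critical points accumulate at stratified critical points of $\overline f_{\theta_*}$ with $\overline f_{\theta_*}=0$, indeed with $\overline F=0$) is the content of \autoref{proposition:compactness} and \autoref{proposition:critholo}; note, though, that the paper's computation in \autoref{proposition:critholo} is carried out only at boundary points, so your ``sharper identity'' asserted at the interior critical points $x_k$ is not literally that computation and would need its own derivation, with the terms coming from $d\overline F(\nabla(1/\tau))\neq 0$ off the boundary estimated --- fixable, but unaddressed. The semialgebraic sheet description with uniformly finite fibres is \autoref{prop:finitetype} rerun with the extra parameter $(\cos\theta,\sin\theta)\in S^1\subset\R^2$. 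The reason all of this falls short is that the fixed-$\theta$ conclusion of \autoref{prop:finitetype} only yields a bound $\lambda(\theta)$ which is a finite-valued (semialgebraic) function of $\theta$, and such a function need not be bounded on $S^1$; equivalently, a curve-selection arc $(\theta(t),\varepsilon(t),x(t))$ in the parametrized critical locus has $\theta$ varying together with $\varepsilon$, so the per-fibre Milnor counting argument cannot exclude $v(t)\to\infty$ along it. Ruling out such arcs is precisely the estimate you flag as the hardest step, and until it is supplied the argument has a genuine gap.
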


Indeed, by Verdier's result \cite[Corollaire 5.1]{verdier} (discussed in \autoref{introduction:vanishing}) one sees that the corresponding homologies of vanishing cycles $H_\ast (X, e^{-i\theta}F)$ form a local system over $S^1$, and from the canonical isomorphism $H_\ast (X, e^{-i\theta}F ) \cong H_\ast (X,e^{-i\theta}F)$ then the finite action Morse homologies form a local system as well---\autoref{conjecture} would allow for a Morse-theoretic construction of this local system structure.

In the special situation when $D \subset Y$ is a smooth divisor and $F$ is such that one can choose $\alpha = \alpha_1 = 1$, then  \autoref{lemma:bounded} applies, and in particular implies  \autoref{conjecture} in this case. For example, this includes the clasical case studied in Picard--Lefschetz theory, where one chooses a very ample line bundle $L\to Y$, two holomorphic sections $s,t \in H^0 (Y,L)$ such that $D = s^{-1}(0)$ is smooth and $s,t$ generate a Lefschetz pencil of hypersurfaces in $Y$, and $F := t/s : X = Y \setminus D \to \mathbb{C}$. The monodromy map \eqref{monodromy} in this case has a simple interpretation in terms of intersection numbers of vanishing cycles of $F$.

\appendix
\section{Perverse sheaf of vanishing cycles}

In this section, we review the construction of the sheaf of vanishing cycles of a holomorphic function, and discuss some properties of it that we will use. We then prove Corollary \ref{corollary:sheaf}.

For a (reduced) complex-analytic space $X$, we denote by $D^{b}_c (X)$ the bounded derived category of constructible sheaves of abelian groups on $X$. A typical object of this category (a complex of sheaves) will be denoted by $A^\bullet \in D_{c}^b (X)$. We refer to \cite{dimca} for the basic definitions and results about the derived category and derived functors.

Let $F : X \to \mathbb{C}$ be a complex-analytic function on the complex-analytic space $X$. We denote the fibers of $F$ by $X_t = F^{-1}(t)$, and the fiber inclusion by $\iota_t : X_t \hookrightarrow X$. In \cite{deligne}, Deligne introduced the functors of nearby and vanishing cycles of $F$ (at the value $0 \in \mathbb{C}$):
\[
\psi_F \, , \phi_F \,  : D_{c}^b (X) \to D_{c}^b (X_0 ) .\\
\]
To define these, we consider the following diagram of maps:
\[
\begin{tikzcd}
X & \arrow{l}{j} X \setminus X_0 \arrow{d}{F} & \arrow{l}{\pi} E \arrow{d} \\
X_0 \arrow{u}{\iota_0} & \mathbb{C}^\ast & \arrow{l}{\mathrm{exp}} \mathbb{C} .
\end{tikzcd}
\]
Here $E$ is the analytic space defined so that the right-most square is a pullback square. 
The functor of \textit{nearby cycles} of $F$ (at the value $0 \in \mathbb{C}$) is defined by
\[
\psi_F (A^\bullet ) := \iota_{0}^\ast \circ R(j \circ \pi )_\ast \circ (j \circ \pi )^\ast A^\bullet .
\]
We have a natural transformation $\mathrm{Id} \to R(j\circ \pi )_\ast \circ (j \circ \pi )^\ast $ arising from adjunction (namely, $(j\circ \pi)^\ast$ is left-adjoint to the functor $R (j\circ \pi)_\ast$), which induces a morphism
\[
\iota_{0}^\ast A^\bullet \to \psi_F (A^\bullet ),
\]
and the functor of \textit{vanishing cycles} of $F$ (at the value $0 \in \mathbb{C}$) is defined as the cone of this morphism:
\[
\phi_F (A^\bullet ) := Cone ( \iota_{0}^\ast A^\bullet \to \psi_F (A^\bullet ) ).
\]
Thus, we have an exact triangle in $D_{c}^b (X_0)$ for all $A^\bullet \in D_{c}^b (X)$:
\begin{align}
\iota_{0}^\ast A^\bullet \to \psi_F A^\bullet \to \phi_F A^\bullet \xrightarrow{[+1]} \label{trianglepsiphi}
\end{align}

We will only be interested in the value of these functors on $A^\bullet =\underline{\mathbb{Z}}$, the constant sheaf on $X$ supported in degree zero. The stalks of the cohomology sheaves of $\psi_{F} \underline{\mathbb{Z}}$ (resp. $\phi_F \underline{\mathbb{Z}}$) at a point $ x \in X_0$ recover the integral $\mathbb{Z}$-cohomology of the \textit{Milnor fiber} of $F$ at $x$ (resp. the \textit{local Morse group} at $x$), see \cite[\S 4.2]{dimca}:
\begin{align*}
& \mathscr{H}^\ast ( \psi_F \underline{\mathbb{Z}} )_x \cong H^\ast ( B_{\varepsilon} (x ) \cap X_t , \underline{\mathbb{Z}} )\\
& \mathscr{H}^{\ast} ( \phi_F \underline{\mathbb{Z}} )_x \cong H^{\ast+1} ( B_{\varepsilon}(x) , B_{\varepsilon} (x ) \cap X_t   ; \mathbb{Z} ),
\end{align*}
and the exact triangle (\ref{trianglepsiphi}) reduces, upon passing to stalk cohomologies, to the long exact sequence in cohomology of the pair $(B_\varepsilon (x) , B_\varepsilon (x) \cap X_t )$. 
Here $B_{\varepsilon}(x) \subset X$ is obtained by intersecting $X$ with a small radius ball around $0$ in $\mathbb{C}^N$ for a given local analytic embedding $(X, x) \subset (\mathbb{C}^N , 0 )$, and $0 <|t | \ll \varepsilon$. In particular, when $X$ is \textit{non-singular} and $x$ is a regular point of $F$ then $\mathscr{H}^{\ast} ( \phi_F \underline{\mathbb{Z}} )  $ vanishes. Hence the support of $\phi_F \underline{\mathbb{Z}}$ is contained in the critical locus $\mathrm{Crit}F $ of $F : X \to \mathbb{C}$. 

Let $C(F)= F ( \mathrm{Crit}F) \subset \mathbb{C}$ denote the subset of critical values of $F$. Taking into account all of these leads to the following:

\begin{definition}[\cite{Brav2015,bussi}] Suppose that $X$ is non-singular. The \textit{sheaf of vanishing cycles} of $F$ is the object $\mathscr{P}_{F}^\bullet \in D_{c}^b (\mathrm{Crit}F)$ 
given by
\[
\mathscr{P}_{F}^\bullet = \bigoplus_{ z \in C(F)}  ( \phi_{F-z}  \underline{\mathbb{Z}} )|_{\mathrm{Crit}F \cap X_z} .
\]
\end{definition}

From now on, we shall focus on the case when $X$ is a non-singular algebraic variety over $\mathbb{C}$, and $F : X \to \mathbb{C}$ is a regular function. By \cite[Corollaire 5.1]{verdier}, there exists a minimal \textit{finite} set $B(F) \subset \mathbb{C}$ such that
\[
F: F^{-1} ( \mathbb{C} \setminus B(F) ) \to \mathbb{C} \setminus B(F) 
\]
is a $C^\infty$ locally trivial fibration. 

\begin{definition}
A value $z\in B(F)$ is called a \textit{bifurcation value} of $F$. 
\end{definition}

The set $C(F)$ of critical values is contained in $B(F)$, but there may be bifurcation values that are not critical values:

\begin{example}
If $X = \mathbb{C}^2$ and $F = x + x^2 y$ then it is easy to see that $F$ has no critical values; in particular $\mathbb{H}^\ast (\mathrm{Crit}F , \mathscr{P}_{F}^\bullet ) = 0$. But $B(F) = \{ 0 \}$. Indeed, 
\[
F^{-1}(0) \cong \mathbb{C} \sqcup \mathbb{C}^\ast \quad , \quad F^{-1}(t\neq 0 ) \cong \mathbb{C}^\ast .
\]

\end{example}

\begin{definition}
We say $z \in \mathbb{C}$ is a \textit{bifurcation value at infinity of} $F$ if for every $\delta>0$ and every compact subset $K \subset X$, the mapping
\[
F: F^{-1}(B_\delta (z)) \setminus K \to B_{\delta} (z )
\]
is \textit{not} a $C^\infty$ locally trivial fibration.
\end{definition}

The following result, combined with Theorem \ref{theorem:main1}, establishes Corollary \ref{corollary:sheaf}:
\begin{proposition}\label{proposition:sheafcomputation}
Let $X$ be a non-singular algebraic variety and $F$ a regular function on $X$. Suppose that $F$ has no bifurcation values at infinity.
Then there exists a constant $\delta>0$ such that for any $t \in \mathbb{C}$ with $0 < | t | < \delta$ there is an isomorphism
\[
\mathbb{H}^\ast ( \mathrm{Crit}F , \mathscr{P}_{F}^\bullet ) \cong \bigoplus_{z \in C(F)} H^{\ast+1} (F^{-1}(B_{\delta} (z)  )  , F^{-1}(z+ t ) ).
\]
\end{proposition}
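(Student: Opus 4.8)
The strategy is to reduce, one critical value at a time, the hypercohomology of $\mathscr{P}^\bullet_F$ to the long exact sequence relating a Milnor-type tube around a fibre to a nearby fibre.

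\emph{Splitting off the critical values.} First I would use that $C(F)$ is finite and that the complexes $(\phi_{F-z}\underline{\mathbb{Z}})|_{\mathrm{Crit}F\cap X_z}$ are supported on the pairwise disjoint closed subsets $\mathrm{Crit}F\cap X_z\subset\mathrm{Crit}F$: hypercohomology then distributes over the direct sum, and restricting to $\mathrm{Crit}F\cap X_z$ changes nothing since $\phi_{F-z}\underline{\mathbb{Z}}$ is already supported there. So $\mathbb{H}^\ast(\mathrm{Crit}F,\mathscr{P}^\bullet_F)\cong\bigoplus_{z\in C(F)}\mathbb{H}^\ast(X_z,\phi_{F-z}\underline{\mathbb{Z}})$, and it suffices to produce, for each $z\in C(F)$, an isomorphism $\mathbb{H}^k(X_z,\phi_{F-z}\underline{\mathbb{Z}})\cong H^{k+1}(F^{-1}(B_\delta(z)),F^{-1}(z+t))$ for a single $\delta>0$ serving all finitely many $z$ simultaneously and all $t$ with $0<|t|<\delta$.

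\emph{The tube and its two long exact sequences.} Fix $z$ and write $T=F^{-1}(\bar B_\delta(z))$, $T^\ast=F^{-1}(\bar B_\delta(z)\setminus\{z\})$, $T_t=F^{-1}(z+t)$. By Verdier's theorem \cite{verdier} the set $B(F)$ is finite; choosing $\delta$ smaller than the distance from $z$ to $B(F)\setminus\{z\}$ (and so that the balls $B_\delta(z)$, $z\in C(F)$, are disjoint), the map $F\colon T^\ast\to\bar B_\delta(z)\setminus\{z\}$ is a locally trivial fibration, so $T^\ast$ is homotopy equivalent to the mapping torus of its geometric monodromy and its infinite cyclic cover $\widetilde{T^\ast}$ deformation retracts onto $T_t$. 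I would then prove the crucial geometric lemma: after possibly shrinking $\delta$, the inclusion $X_z\hookrightarrow T$ is a homotopy equivalence, i.e. $T$ is a good neighbourhood of $X_z$. This is where the no-bifurcation-at-infinity hypothesis is indispensable: it provides a compact $K$ and $\delta$ with $F\colon F^{-1}(B_\delta(z))\setminus K\to B_\delta(z)$ locally trivial, which one glues to a Thom--Mather isotopy coming from a Whitney stratification of $X$ refining $\mathrm{Crit}F$ on a compact piece carrying the critical behaviour near $X_z$ (alternatively one reduces to a proper map using a suitable compactification). Granting this, the general theory of nearby cycles for the Milnor tube $F|_T$ and proper base change (valid because $F|_T$ is topologically as good as a proper map over $B_\delta(z)$) give $\mathbb{H}^k(X_z,\psi_{F-z}\underline{\mathbb{Z}})\cong H^k(\widetilde{T^\ast})\cong H^k(T_t)$; tracing the natural transformation $\iota_z^\ast\to\psi_{F-z}$ identifies the induced map $H^k(X_z)\cong H^k(T)\to H^k(T_t)$ with restriction along $T_t\hookrightarrow T$. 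For these sheaf-theoretic manipulations I would follow \cite{dimca}.

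\emph{Conclusion.} The defining triangle $\iota_z^\ast\underline{\mathbb{Z}}\to\psi_{F-z}\underline{\mathbb{Z}}\to\phi_{F-z}\underline{\mathbb{Z}}\xrightarrow{+1}$ on $X_z$ yields the long exact sequence $\cdots\to H^k(X_z)\to\mathbb{H}^k(X_z,\psi_{F-z}\underline{\mathbb{Z}})\to\mathbb{H}^k(X_z,\phi_{F-z}\underline{\mathbb{Z}})\to H^{k+1}(X_z)\to\cdots$ (using $\mathbb{H}^k(X_z,\iota_z^\ast\underline{\mathbb{Z}})=H^k(X_z)$), and under the identifications above this is exactly the long exact sequence of the pair $(T,T_t)$, with $\mathbb{H}^k(X_z,\phi_{F-z}\underline{\mathbb{Z}})$ sitting in the position of $H^{k+1}(T,T_t)$; the five lemma gives $\mathbb{H}^k(X_z,\phi_{F-z}\underline{\mathbb{Z}})\cong H^{k+1}(T,T_t)$. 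Replacing the closed tube $T$ by the open tube $F^{-1}(B_\delta(z))$ (which has the same cohomology) and summing over $z\in C(F)$ finishes the proof, with the shift in degree matching the stalk formula $\mathscr{H}^k(\phi_F\underline{\mathbb{Z}})_x\cong H^{k+1}(B_\varepsilon(x),B_\varepsilon(x)\cap X_t)$ recorded earlier. The main obstacle is the good-neighbourhood lemma: for proper $F$ it is classical, but for a general regular function the tube can fail to retract onto the central fibre because of vanishing cycles at infinity, and it is precisely the no-bifurcation-at-infinity assumption that excludes this; everything else is formal use of the six operations and bookkeeping of long exact sequences.
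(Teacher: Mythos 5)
Your proposal is correct and follows essentially the same route as the paper: reduce to a tube around each critical value, show that the central fibre includes into the tube as a homotopy equivalence (the one place the no-bifurcation-at-infinity hypothesis is used), identify the hypercohomology of nearby cycles with $H^\ast$ of a nearby fibre, and read the conclusion off the vanishing-cycles triangle as the long exact sequence of the pair. The only notable difference is in the good-neighbourhood lemma, where the paper simply retracts the tube onto $F^{-1}(z)$ along the flow of $-\nabla|F|^2$ for a metric that is a product on a trivialisation at infinity, which is more elementary than the Thom--Mather gluing you sketch; also note that your appeal to proper base change is only heuristic, since $F$ restricted to the tube is not proper, and the paper instead argues via homotopy equivalences over the tube.
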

\begin{proof}

The sheaf $\mathscr{P}_{F}^\bullet$ is supported on the critical locus, which is contained in the union of the finitely-many critical fibers of $F$. We may thus replace $X$ by the open `tube' $T = F^{-1} (B_{\delta} (z))$ with $\delta>0$, and suppose that $z$ is the only critical value, which for simplicity is taken to be $z = 0$. 
Since $F$ is a fibration over $B_{\delta}(z ) \setminus z$, then for $0 < |t | < \delta$ we have a homotopy-equivalence over $T$:
\[
\begin{tikzcd}
X_t \arrow{rr}{\simeq} \arrow{rd}{\iota_t} &  & E \arrow{ld}{j \circ \pi }\\
 & T &  
\end{tikzcd}
\]
and hence by \cite[Proposition 2.4.6]{dimca} there is an isomorphism in $D_{c}^b (T)$:
\begin{align*}
 R (\iota_t)_\ast (\iota_t )^\ast \mathbb{Z} \cong R (j \circ \pi )_\ast (j \circ \pi )_{\ast } \mathbb{Z} .
\end{align*}

Since $F$ has no bifurcation values at infinity, the inclusion $\iota_0 : X_0 \to T$ defines a homotopy equivalence. To see this, fix a Riemannian metric on $T$ which is a product on a trivialisation at infinity; the flow of the gradient $- \nabla |F|^2$ induces a continuous deformation retraction of $T$ onto $X_0$ by flowing to infinite time.
Since $\iota_0 : X_0 \to T$ is a homotopy equivalence, then we have an isomorphism in $D_{c}^b (T)$ (again, by \cite[Proposition 2.5.6]{dimca}): for every $A^\bullet \in D_{c}^b (T)$
\[
R (\iota_0 )_\ast (\iota_0 )^\ast A^\bullet \cong A^\bullet .
\]
Thus, combining these two isomorphisms we obtain 
\begin{align*}
R(\iota_0 )_\ast \psi_F (\underline{\mathbb{Z}} ) \cong R (j \circ \pi )_\ast (j \circ \pi )_{\ast } \underline{\mathbb{Z}} \cong R (\iota_t)_\ast (\iota_t )^\ast \underline{\mathbb{Z}} .
\end{align*}

Passing to hypercohomology, using basic formal properties about hypercohomology and direct and inverse image functors, (see \cite[\S 2.3]{dimca}) gives:
\begin{align*}
\mathbb{H}^\ast ( X_0 , \psi_F \underline{\mathbb{Z}} ) & \cong \mathbb{H}^\ast (T , R(\iota_0 )_\ast \psi_F \underline{\mathbb{Z}} )\\
& \cong \mathbb{H}^\ast ( T , R (\iota_t)_\ast (\iota_t )^\ast \underline{\mathbb{Z}} ) \\
& \cong \mathbb{H}^\ast (X_t , (\iota_t )^\ast \underline{\mathbb{Z}}  = \underline{\mathbb{Z}}) \\
& = H^\ast (X_t , \mathbb{Z} ). 
\end{align*}

Passing to cohomology in the exact triangle (\ref{trianglepsiphi}) and inserting the previous isomorphism gives a long exact sequence
\[
 H^\ast (X_0 , \mathbb{Z} ) = H^\ast (T , \mathbb{Z} )\xrightarrow{\iota_{t}^\ast} H^\ast ( X_t , \mathbb{Z} ) \to \mathbb{H}^\ast (X_0 , \phi_F \underline{\mathbb{Z}} ) \xrightarrow{[+1]}
\]
from which we deduce:
\[
\mathbb{H}^\ast (X_0 , \phi_F \underline{\mathbb{Z}} )\cong H^{\ast +1}(T , X_t  ; \mathbb{Z} ).
\]
Since $\mathscr{P}_{F}^\bullet = \phi_{F} \underline{\mathbb{Z}}$ in this case, we have obtained the required isomorphism.
\end{proof}

\printbibliography

\end{document}